\def\ps@pprintTitle{%
  \let\@oddhead\@empty
  \let\@evenhead\@empty
  \let\@oddfoot\@empty
  \let\@evenfoot\@oddfoot
}
\newtheorem{theorem}{Theorem}[section]
\journal{}
\begin{document}

\begin{frontmatter}

\title{
Scalar auxiliary variable (SAV) stabilization of implicit-explicit (IMEX) time integration schemes for nonlinear structural dynamics
}

\author[1]{Sun-Beom Kwon\corref{mycorrespondingauthor}}
\ead{sbkwon@purdue.edu}
\author[1]{Arun Prakash}
\cortext[mycorrespondingauthor]{Corresponding author}
\ead{aprakas@purdue.edu}

\address[1]{Lyles School of Civil Engineering, Purdue University, West Lafayette, IN 47907, USA}

\begin{abstract}
Implicit-explicit (IMEX) time integration schemes are well suited for nonlinear structural dynamics because of their low computational cost and high accuracy. 
However, stability of IMEX schemes cannot be guaranteed for general nonlinear problems. 
In this article, we present a scalar auxiliary variable (SAV) stabilization of high-order IMEX time integration schemes that leads to unconditional stability.
The proposed IMEX-BDF$k$-SAV schemes treat linear terms implicitly using $k^{th}$-order backward difference formulas (BDF$k$) and nonlinear terms explicitly.
This eliminates the need for iterations in nonlinear problems and leads to low computational cost.
Truncation error analysis of the proposed IMEX-BDF$k$-SAV schemes confirms that up to $k^{th}$-order accuracy can be achieved and this is verified through a series of convergence tests.
Unlike existing SAV schemes for first-order ordinary differential equations (ODEs), we introduce a novel SAV for the proposed schemes that allows direct solution of the second-order ODEs without transforming them to a system of first-order ODEs.
Finally, we demonstrate the performance of the proposed schemes by solving several nonlinear problems in structural dynamics and show that the proposed schemes can achieve high accuracy at a low computational cost while maintaining unconditional stability.
\end{abstract}

\begin{keyword}
High-order time integration scheme \sep Implicit--explicit \sep Scalar auxiliary variable \sep Structural dynamics \sep Energy stability \sep Error analysis
\end{keyword}

\end{frontmatter}

\section{Introduction}
\label{sec:4th1}

Direct time integration is often used to solve the second-order ordinary differential equations (ODEs) governing the dynamic response of structures and wave propagation in solids \cite{chopra2007dynamics,zienkiewicz2005finite,hughes2012finite,bathe2016finite}.
Time integration is conventionally categorized into explicit and implicit schemes.
Explicit schemes generally have a low computational cost per time-step and are usually used for impact, crash, and short-time wave propagation problems.
However, they are almost always conditionally stable and are associated with a maximum critical time-step given by the Courant-Friedrichs-Lewy (CFL) condition \cite{courant1928partiellen}.
On the other hand, implicit schemes permit a greater range of time-steps and several are unconditionally stable.
However, this flexibility comes with a higher computational cost per time-step which is exacerbated for nonlinear problems due to the need for computing the solution iteratively at each time step. 
Nevertheless, implicit schemes are usually more suitable for large problems where the primary interest is in simulating the overall dynamics of a structure rather than capturing local transients.
Implicit-explicit (IMEX) schemes combine implicit and explicit schemes in such a way that exploits the benefits of both types of schemes.
IMEX schemes are well suited for nonlinear problems because they do not require iterations at each time-step and have a low computational cost.
However, stability of IMEX schemes cannot be guaranteed, especially for nonlinear problems \cite{ascher1995implicit}.
In this article, we propose a stabilization scheme for IMEX schemes that guarantees unconditional stability.
First, we provide a review of relevant literature on time integration schemes, especially focusing on stability for nonlinear problems.

In general, time integration schemes can be classified into two groups: multi-stage methods and multi-step methods.
In multi-stage methods, the time-step is divided into multiple intermediate stages and the intermediate state vectors are used to advance the solution to the next time-step.
Multi-step methods, on the other hand, advance the solution by using known state vectors from several previous time-steps.
Most commonly, however, one finds single-step (SS) time integration methods in the literature which lie at the intersection of multi-stage methods and multi-step methods.
Examples of SS methods include the Newmark-$\beta$ \cite{newmark1959method}, Wilson-$\theta$ \cite{wilson1968computer}, HHT-$\alpha$ \cite{hilber1977improved}, WBZ-$\alpha$ \cite{wood1980alpha}, CH-$\alpha$ \cite{chung1993time}, CL methods \cite{chung1994new} and numerous others.
Stability, accuracy and computational cost characteristics of multi-stage and multi-step methods are discussed next to provide context for the present work. 

In the class of multi-stage methods, the Runge-Kutta (RK) family of integrators is widely used~\cite{wang2023overview,butcher1996history,butcher2000numerical,butcher2016numerical}. 
RK methods may be explicit \cite{runge1895numerische,heun1900neue,kutta1901beitrag,nystrom1925numerische,butcher1964runge} or implicit \cite{kuntzmann1961neuere,butcher1964implicit,alexander1977diagonally,kennedy2016diagonally,ababneh2009design}.
In addition, multi-stage methods also include composite time integration schemes, which use different time integration schemes in different intermediate stages of each time-step \cite{bathe2005composite,bathe2007conserving,noh2013explicit,soares2016novel,kwon2017non,kim2017new,kim2018improved,kim2018improved2,malakiyeh2019bathe,malakiyeh2021new,noh2019bathe,kwon2020analysis,kwon2021selecting,kim2019higher,ji2020optimized}.
In general, implicit multi-stage methods can achieve high-order accuracy with unconditional stability~\cite{iserles2009first,wanner1996solving}.
On the other hand, explicit and IMEX multi-stage methods cannot be guaranteed to be unconditionally stable for general nonlinear problems \cite{verwer1996explicit, tan2022stability}.
In addition, multi-stage methods usually have a high computational cost per time-step which, for an $s$-stage method, is roughly equivalent to $s$ times the computational cost of a 1-step method.

Multi-step methods that use a linear combination of the known state vectors from previous time-steps are called linear multi-step (LMS) methods.
Examples of LMS methods include Adams-Bashforth \cite{bashforth1883attempt}, Adams-Moulton \cite{moulton1926new}, and BDF \cite{gear1971numerical}.
In LMS methods, two important results are noteworthy:
\begin{enumerate}
    \item[(1)] The first Dahlquist barrier \cite{dahlquist1956convergence,dahlquist1958stability} states that the order of accuracy of a stable linear $k$-step method cannot exceed $k+1$ if $k$ is odd, or $k+2$ if $k$ is even.
    \item[(2)] The second Dahlquist barrier \cite{dahlquist1963special} states that there is no explicit $A$-stable LMS method; the highest order of accuracy of an $A$-stable LMS method is 2; and the trapezoidal rule (TR) has the smallest error constant among the second-order $A$-stable LMS methods.
\end{enumerate}
From the second Dahlquist barrier, it is evident that high-order unconditionally stable LMS methods cannot be designed for linear or nonlinear structural dynamics.
Erlicher et al. \cite{erlicher2002analysis} conducted stability analysis of the generalized-$\alpha$ method, a second-order LMS scheme, for nonlinear structural dynamics and showed that the method is energy stable in the high-frequency range.

The preceding discussion shows that ensuring unconditional stability of high-order IMEX LMS schemes has not been achieved for general nonlinear problems. 
However, in recent years, the scalar auxiliary variable (SAV) approach, introduced by Shen and co-workers \cite{shen2018scalar,shen2019new,shen2018convergence} for gradient flows, has been used to develop efficient high-order time integration schemes with unconditional energy stability across multiple application domains.
With the SAV approach, one first defines a scalar variable $\Phi$ and augments the governing differential equations of the problem at hand with an evolution equation for this auxiliary variable which, when discretized, guarantees unconditional stability.
Shen and co-workers formulated several time integrators, such as $k^{\textrm{th}}$-order BDF (BDF$k$) for $1\le k \le 4$ and second-order Crank-Nicolson methods, based on the SAV approach, and proved their unconditional energy stability.
Note that BDF$k$ schemes without the SAV approach are only conditionally stable when $k\ge3$.
Lin et al.~\cite{lin2019numerical} developed BDF$k$ SAV schemes to solve incompressible Navier-Stokes equations.
Alternative approaches using different SAVs for Navier-Stokes equations have also been formulated \cite{li2020error, huang2021stability, li2022new, zhang2022unconditional}. 
Furthermore, Huang and Shen \cite{huang2022new} developed a class of implicit-explicit (IMEX) BDF$k$ SAV schemes for general dissipative systems, with $1\le k \le 5$.
They proved that the IMEX-BDF$k$-SAV scheme is $k^{\textrm{th}}$-order accurate and unconditionally energy stable for dissipative systems.
Other applications of the SAV approach include Schr{\"o}dinger equation \cite{deng2021second,poulain2022convergence}, magneto-hydrodynamics \cite{li2022stability}, electro-hydrodynamics \cite{he2023stability}, and modified phase-field crystal equation \cite{qi2023error}.
However, these SAV schemes are primarily designed to solve only first-order ordinary differential equations (ODEs).
While higher-order ODEs can certainly be transformed to a system of first-order ODEs and solved with the SAV approach, doing so may be computationally inefficient.

In this paper, we present a SAV stabilization of high-order IMEX BDF$k$ time integration schemes to solve general nonlinear problems in structural dynamics without converting the governing second-order ODEs to a set of first-order ODEs.
Note that, there is no existing high-order IMEX time integration method in the literature that can ensure unconditional stability for general nonlinear problems in structural dynamics. 
In section~\ref{sec:4th2}, we define a scalar auxiliary variable for structural dynamics which is crucial for ensuring unconditional stability and high-order accuracy.
Further, the proposed IMEX-BDF$k$-SAV schemes discretize linear terms using implicit time integration while treating nonlinear terms explicitly, thereby avoiding the need for iterations at each time-step.
We prove unconditional energy stability for linear as well as nonlinear structural dynamics.
We also carry out a truncation error analysis and convergence tests to show that the proposed IMEX-BDF$k$-SAV schemes achieve $k^{\textrm{th}}$-order accuracy.
Finally, in section {\ref{sec:4th3}}, we present several numerical examples
to illustrate the performance of the proposed IMEX-BDF$k$-SAV schemes over existing time integration methods in terms of stability, accuracy and computational cost.

\section{IMEX BDF$k$ SAV time integration schemes}
\label{sec:4th2}
The semi-discrete second-order ordinary differential equations governing the dynamic response of a nonlinear structural system can be written as:
\begin{equation}
    \begin{array}{lll}
        \boldsymbol{M}\ddot{\boldsymbol{u}}(t)+\boldsymbol{f}^{int}(\boldsymbol{u}(t),\dot{\boldsymbol{u}}(t))=\boldsymbol{f}^{ext}(t) & \textrm{in  } \, \Omega & \forall \, t \in [0,T] \\
        \boldsymbol{u}(t) = \boldsymbol{u}_D(t) & \textrm{on  } \, \Gamma_D & \forall \, t \in (0,T] \\
        \boldsymbol{u}(0) = \boldsymbol{u}_0, \quad \dot{\boldsymbol{u}}(0) = \boldsymbol{v}_0 & \textrm{in  } \, \Omega & \forall \, t = 0
    \end{array}
    \label{eqn:4thGDE}
\end{equation}
where $\boldsymbol{M}$, $\boldsymbol{u}$, $\boldsymbol{f}^{int}$, and $\boldsymbol{f}^{ext}$ represent the mass matrix, nodal displacement vector, internal force vector, and external force vector, respectively.
A dot over a quantity represents its time derivative.
Note that the mass matrix $\boldsymbol{M}$ is symmetric and positive definite.
The vector $\boldsymbol{u}_D$ is the prescribed displacement vector on the Dirichlet boundary $\Gamma_D$ over the time duration $(0,T]$, and $\boldsymbol{u}_0$ and $\boldsymbol{v}_0$ denote the initial conditions for the displacement and velocity vectors, respectively.
This governing differential equation may be solved approximately using a time integration scheme that discretizes the time of simulation $(0, T]$ into a number of time-steps $\Delta t$ and advances the solution one instant of time $t_n$ to the next $t_{n+1} = t_n + \Delta t$ using known kinematic quantities from previous time-steps.
The choice of time-step $\Delta t$ is usually problem dependent, but as a general guideline, one may pick the time-step as one-tenth of the time period corresponding to the largest frequency of interest.  
Note that for some time integration schemes, a special starting procedure may be needed at $t_0$ to compute the solution for the first few time-steps. 

To facilitate the development of the proposed IMEX-BDF$k$-SAV schemes, the internal force vector $\boldsymbol{f}^{int}$ can be decomposed, without loss of generality, into a non-negative linear part and a general nonlinear part:
\begin{equation}
    \boldsymbol{f}^{int}(\boldsymbol{u}(t),\dot{\boldsymbol{u}}(t)) = \underbrace{\boldsymbol{C}\dot{\boldsymbol{u}}(t) + \boldsymbol{K}\boldsymbol{u}(t)}_{\text{Non-negative linear part}} + \underbrace{\boldsymbol{f}^{NL}(\boldsymbol{u}(t),\dot{\boldsymbol{u}}(t))}_{\text{Nonlinear part}}
\end{equation}
where $\boldsymbol{C}$, $\boldsymbol{K}$, and $\boldsymbol{f}^{NL}$ denote the linear damping matrix, linear stiffness matrix, and the nonlinear internal force vector, respectively.
Note that matrices $\boldsymbol{C}$ and $\boldsymbol{K}$ are required to be  symmetric and positive semi-definite.
If, for certain unstable physical systems, the instantaneous tangent matrices $\boldsymbol{C}$ and/or $\boldsymbol{K}$ are negative definite, then they would be taken as $\boldsymbol{0}$.

\subsection{Definition of the scalar auxiliary variable (SAV) for structural dynamics}
In structural dynamics, we first define a pseudo-energy of a general nonlinear system as
\begin{equation}
    \Psi(t) = \frac{1}{2}\dot{\boldsymbol{u}}(t)^T\boldsymbol{M}\dot{\boldsymbol{u}}(t) + \frac{1}{2}\boldsymbol{u}(t)^T\boldsymbol{K}\boldsymbol{u}(t)
    \label{eqn:4thEnergy}
\end{equation}
Note that this pseudo-energy would be identical to the real energy for linear structural dynamics.
Next, we define the scalar auxiliary variable (SAV) for general nonlinear structural dynamics as:
\begin{equation}
    \Phi(t)=\Psi(t)+\psi
    \label{eqn:4thSAV_definition}
\end{equation}
where $\psi$ is a user-defined scalar value that ensures $\Phi(t)>0 \quad \forall t\in[0,T]$.
Note that, since psuedo-energy is always non-negative, one may choose a small non-zero value for $\psi$ to ensure that $\Phi(t)>0$.
While such a choice is valid for maintaining unconditional stability, in general, one should choose a sufficiently large value of $\psi$ to obtain high accuracy.  
As we show later in section~\ref{subsec:4th24} and section~\ref{sec:4th3}, a good rule of thumb for the value for $\psi$ is to choose it to be at least $100$ times the maximum value of psuedo-energy $\Psi(t)$ expected during the entire simulation.  

To formulate the IMEX-BDF$k$-SAV schemes, we define an evolution equation for $\Phi$ by computing its time derivative as:
\begin{equation}
    \frac{d\Phi(t)}{dt} = \frac{d(\Psi(t)+\psi)}{dt} = -\Theta(t)
    \label{eqn:4thSAV_ODE}
\end{equation}
where 
\begin{equation}
    \Theta=-\dot{\boldsymbol{u}}^T \left( \boldsymbol{M}\ddot{\boldsymbol{u}} + \boldsymbol{K}\boldsymbol{u} \right)=\dot{\boldsymbol{u}}^T \left( \boldsymbol{C}\dot{\boldsymbol{u}}-\boldsymbol{f}^{ext}+\boldsymbol{f}^{NL} \right)
    \label{eqn:4thSAV_theta}
\end{equation}
Note that for linear elasto-dynamics without external forces (i.e., $\boldsymbol{f}^{NL}=\boldsymbol{f}^{ext}=\boldsymbol{0}$), the energy in the system is dissipated (i.e., $\Theta \ge 0$) since $\boldsymbol{C}$ is positive semi-definite.

We now construct new IMEX-BDF$k$-SAV schemes by discretizing the equation of motion, Eq.~(\ref{eqn:4thGDE}), in time using an implicit-explicit approach, where nonlinear terms are treated using explicit integration (denoted with superscript EX) and linear terms are kept implicit (superscript IM):
\begin{align}
    &\boldsymbol{M}\boldsymbol{a}_{n+1}^{IM} + \boldsymbol{C}\boldsymbol{v}_{n+1}^{IM} + \boldsymbol{K}\boldsymbol{u}_{n+1}^{IM} +\boldsymbol{f}^{NL}\left(\boldsymbol{u}_{n+1}^{EX} ,\boldsymbol{v}_{n+1}^{EX} \right)=\boldsymbol{f}^{ext}_{n+1} \label{eqn:4thGE} \\
    &\boldsymbol{v}_{n+1}^{IM} = \frac{1}{\Delta t} \left\{ H^{(k)}\boldsymbol{u}_{n+1}^{IM} - \sum_{j=0}^{k-1}(-1)^j\frac{1}{j+1}{k \choose j+1} \boldsymbol{u}_{n-j} \right\} \label{eqn:4thBDFvel} \\
    &\boldsymbol{a}_{n+1}^{IM} = \frac{1}{\Delta t} \left\{ H^{(k)}\boldsymbol{v}_{n+1}^{IM} - \sum_{j=0}^{k-1}(-1)^j\frac{1}{j+1}{k \choose j+1} \boldsymbol{v}_{n-j} \right\} \label{eqn:4thBDFacc}
\end{align}
where $H^{(k)}$ is the $k^{\textrm{th}}$ harmonic number ($=\sum_{j=1}^k\frac{1}{k}$), and ${k \choose j+1}$ denotes the binomial coefficient ($=k! /[(k-j-1)!(j+1)!]$).
Note that the explicit state vectors $\boldsymbol{u}_{n+1}^{EX}$ and $\boldsymbol{v}_{n+1}^{EX}$ are obtained using a $k^{\textrm{th}}$-order extrapolation:
\begin{align}
    \boldsymbol{u}_{n+1}^{EX} = \sum_{j=0}^{k-1} (-1)^{j} {k \choose j+1}\boldsymbol{u}_{n-j}, \quad
    \boldsymbol{v}_{n+1}^{EX} = \sum_{j=0}^{k-1} (-1)^{j} {k \choose j+1}\boldsymbol{v}_{n-j}
    \label{eqn:4thEX}
\end{align}
Using Eqs.~(\ref{eqn:4thGE})-(\ref{eqn:4thEX}), one can compute the implicit state vectors $\boldsymbol{u}_{n+1}^{IM}$ and $\boldsymbol{v}_{n+1}^{IM}$.
Note that these implicit state vectors would be the solution for a conventional implicit-explicit (IMEX) $k^\textrm{th}$-order backward difference formulas (BDF$k$) scheme.
However, as mentioned before, stability of the IMEX-BDF$k$ schemes is not guaranteed for general nonlinear problems. 

To ensure unconditional stability, we compute an updated value of the SAV $\Phi_{n+1}$ by discretizing its evolution equation (Eq.~(\ref{eqn:4thSAV_ODE})) as follows:
\begin{align}
    & \frac{\Phi_{n+1}-\Phi_n}{\Delta t} = -\frac{\Phi_{n+1}}{\Psi[\boldsymbol{u}_{n+1}^{IM},\boldsymbol{v}_{n+1}^{IM}]+\psi}\Theta[\boldsymbol{u}_{n+1}^{IM},\boldsymbol{v}_{n+1}^{IM}] \label{eqn:4thBDFSAV}
\end{align}
and then update for the IMEX-BDF$k$ solution using the computed value of the SAV $\Phi_{n+1}$ as:
\begin{align}
    \boldsymbol{u}_{n+1} = \Upsilon^{(k)}_{n+1} \boldsymbol{u}_{n+1}^{IM}, \quad \boldsymbol{v}_{n+1} = \Upsilon^{(k)}_{n+1} \boldsymbol{v}_{n+1}^{IM} \label{eqn:4thupdated}
\end{align}
where
\begin{align}
    & \Upsilon^{(k)}_{n+1}=1-(1-\Xi_{n+1})^{\beta^{(k)}},
    \quad \textrm{and} \label{eqn:4thupsilon} \\
    & \Xi_{n+1}=\frac{\Phi_{n+1}}{\Psi[\boldsymbol{u}_{n+1}^{IM},\boldsymbol{v}_{n+1}^{IM}]+\psi} \label{eqn:4thxi}
\end{align}

To ensure $k^{\textrm{th}}$-order accuracy of the scheme, parameter $\beta^{(k)}$ is chosen as:
\begin{equation}
    \beta^{(k)} = \begin{cases}
        1+(k+1)/2& \textrm{for odd numbers of } k \\
        1+k/2& \textrm{for even numbers of  } k
    \end{cases}
    \label{eqn:4thbetaPara}
\end{equation}
Truncation error analysis (see section~\ref{subsec:4th23} for details) confirms $k^{\textrm{th}}$-order accuracy of the proposed IMEX-BDF$k$-SAV schemes. 
Note that the displacement and velocity state vectors ($\boldsymbol{u}_{n+1}$ and $\boldsymbol{v}_{n+1}$) are updated using the same scalar coefficient $\Upsilon^{(k)}_{n+1}$ which helps in keeping the computational overhead of the SAV approach low and also leads to an unconditionally stable scheme (see section~\ref{subsec:4th22} for details).
Finally, if needed, the acceleration $\boldsymbol{a}_{n+1}$ may be updated from the governing equation of motion. 

\subsection{Solution recovery from instability after temporal locking}

Even though the proposed IMEX-BDF$k$-SAV method, as presented above, is unconditionally stable, it suffers from a drawback that when the underlying IMEX-BDF$k$ scheme becomes unstable (i.e. $||\boldsymbol{u}_{n+1}^{IM}|| \rightarrow \infty$ and $||\boldsymbol{v}_{n+1}^{IM}|| \rightarrow \infty$), the computed solution from the IMEX-BDF$k$-SAV scheme tends to zero (i.e. $||\boldsymbol{u}_{n+1}|| \rightarrow 0$ and $||\boldsymbol{v}_{n+1}|| \rightarrow 0$).
This occurs because, when $||\boldsymbol{u}_{n+1}^{IM}|| \rightarrow \infty$ and $||\boldsymbol{v}_{n+1}^{IM}|| \rightarrow \infty$, the definition of $\Psi$ in Eq.~(\ref{eqn:4thEnergy}) leads to
\begin{align}
    \frac{||\boldsymbol{u}_{n+1}^{IM}||}{\Psi[\boldsymbol{u}_{n+1}^{IM},\boldsymbol{v}_{n+1}^{IM}]} \rightarrow 0, \quad
    \frac{||\boldsymbol{v}_{n+1}^{IM}||}{\Psi[\boldsymbol{u}_{n+1}^{IM},\boldsymbol{v}_{n+1}^{IM}]} \rightarrow 0 \label{eqn:4thUV_Phi}
\end{align}
This leads to $\Xi_{n+1}\rightarrow 0 $ and $\Upsilon_{n+1}^{(k)}\rightarrow 0$ and consequently, $||\boldsymbol{u}_{n+1}||, ||\boldsymbol{v}_{n+1}|| \rightarrow 0$.

The fact that $||\boldsymbol{u}_{n+1}||, ||\boldsymbol{v}_{n+1}|| \rightarrow 0$ by itself is not an issue, but for some problems, when $\Phi_{n+1}$ is also small, then the computed solution for the following time-steps continues to remain close to zero (i.e. $||\boldsymbol{u}_{n+m}||, ||\boldsymbol{v}_{n+m}|| \rightarrow 0$ for any value of $m$).
We call this phenomenon \emph{temporal locking} where the structure becomes almost rigid and gives an almost zero response even under the action of external forces.  
To understand why temporal locking occurs, we consider the solution at the next time-step, $t_{n+2}$, after $\boldsymbol{u}_{n+1},\boldsymbol{v}_{n+1},\Phi_{n+1}$ have all been computed.
Following Eqs.~(\ref{eqn:4thGE})-(\ref{eqn:4thBDFSAV}), $\boldsymbol{u}^{IM}_{n+2}, \boldsymbol{v}^{IM}_{n+2}, \Phi_{n+2}$ are obtained.
Next, using Eq.~(\ref{eqn:4thxi}), we can show that: 
\begin{equation}
    \Xi_{n+2} = \frac{\Phi_{n+2}}{\Psi[\boldsymbol{u}_{n+2}^{IM},\boldsymbol{v}_{n+2}^{IM}]+\psi} \le \frac{\Phi_{n+2}}{\psi} \le \frac{2\Phi_{n+1}}{\psi} \approx 0
    \label{eqn:4thXinm}
\end{equation}
In the above, we make use of the Gronwall Lemma which states that $\Phi_{n+2} \le 2 \Phi_{n+1}$ for general nonlinear systems (see Eq.~(\ref{eqn:4thPhiRelation})) and since $\Phi_{n+1} \approx 0$, thus $\Xi_{n+2} \rightarrow 0$ and $\Upsilon_{n+2}^{(k)}\rightarrow 0$.
Thus, from Eqs.~(\ref{eqn:4thupdated}), (\ref{eqn:4thupsilon}), and (\ref{eqn:4thXinm}), $||\boldsymbol{u}_{n+2}|| \approx 0$ and $||\boldsymbol{v}_{n+2}|| \approx 0$.
This phenomenon continues over the following time-steps as well making $||\boldsymbol{u}_{n+m}|| \approx 0$ and $||\boldsymbol{v}_{n+m}|| \approx 0$ for any value of $m$.

To avoid the temporal locking, for the proposed IMEX-BDF$k$-SAV schemes, we set a minimum value of $\Phi_{n+1}$ as $\varepsilon_{tol} \psi$, where $\varepsilon_{tol}$ is a small number (in this study, $\varepsilon_{tol}$ is chosen as $10^{-7}$).
Note that the minimum value of $\Phi_{n+1}$ is problem dependent and that is why it is scaled with the variable $\psi$.
Thus, when $||\boldsymbol{u}_{n+1}|| \approx 0$, $||\boldsymbol{v}_{n+1}|| \approx 0$ and $\Phi_{n+1}/\psi < \varepsilon  _{tol}$, then we initiate a recovery of the SAV, $\Phi$, by replacing the value of $\Phi_{n+1}$ with the following expression:
\begin{equation}
    \Phi_{n+1} = \Psi[\boldsymbol{u}_{n+1},\boldsymbol{v}_{n+1}] +\psi = \left( \Upsilon_{n+1}^{(k)} \right)^2 \Psi[\boldsymbol{u}^{IM}_{n+1},\boldsymbol{v}^{IM}_{n+1}] +\psi
    \label{eqn:4thRecoverning}
\end{equation}
Note that, since $\Upsilon_{n+1}^{(k)}$ and $\Psi[\boldsymbol{u}^{IM}_{n+1},\boldsymbol{v}^{IM}_{n+1}]$ is already computed at each time step, the computational cost for this recovery is negligible. 
Further, since the updated SAV $\Phi_{n+1}$ is always bounded (because $\boldsymbol{u}_{n+1}$ and $\boldsymbol{v}_{n+1}$ are bounded), the stability analysis presented in section~\ref{subsec:4th22} remains valid.
Recovering the value of $\Phi_{n+1}$, as shown above, simply amounts to a modification of the state of the system for the following time-steps. 
One can imagine this recovery as dividing the time interval of simulation into two segments, one before and one after the recovery.
The part of the simulation after the recovery can then be simply thought of as a new simulation with an updated initial condition.
Lastly, the local truncation error (LTE) analysis presented in section~\ref{subsec:4th23} also remains valid because we conduct the LTE analysis only going from $t_{n}$ to $t_{n+1}$.

\subsection{Starting procedure and algorithm}

A special starting procedure is needed for the proposed IMEX-BDF$k$-SAV schemes with $k\ge2$ to achieve $k^{\textrm{th}}$-order accuracy.
At the initial time $t_0$, the state vectors for the first $(k-1)$ steps must be computed with a method that is at least $(k-1)^{\textrm{th}}$-order accurate. 
In this study, we use the $(k-1)^{\textrm{th}}$-order Runge-Kutta method for the first $(k-1)$ steps.

We summarize the procedure of the proposed IMEX-BDF$k$-SAV schemes for structural dynamics in Algorithm \ref{alg:4thProcedure}.

\begin{algorithm}[!htbp]
    \caption{\label{alg:4thProcedure} Time stepping for the IMEX-BDF$k$-SAV schemes.}
    \centering 
    \begin{tabular}{l}
        1. Initial calculation    \\
        \quad a) Construct the mass, stiffness, and damping matrices $\boldsymbol{M}$, $\boldsymbol{K}$, and $\boldsymbol{C}$  \\
        \quad b) Initialize displacement and velocity vectors, and SAV: \\
        \qquad $\boldsymbol{u}_0=\boldsymbol{u}^{EX}_0=\boldsymbol{u}(0), \quad \boldsymbol{v}_0=\boldsymbol{v}_0^{EX}=\dot{\boldsymbol{u}}(0), \quad \Phi_0 = \Psi[\boldsymbol{u}_0, \boldsymbol{v}_0] + \psi$ \\
        2. Starting procedure only when $k\ge2$ ($0 \le {n} < {k-1}$)  \\
        \quad a) Compute $\boldsymbol{u}_{n+1}$ and $\boldsymbol{v}_{n+1}$ \\
        \qquad using $(k-1)^\textrm{th}$-order time integration schemes (e.g. Runge-Kutta method) \\
        \quad b) If $n=k-2$, then compute the SAV: \\
        \qquad $\Phi_{n+1} = \Psi[\boldsymbol{u}_{n+1}, \boldsymbol{v}_{n+1}] + \psi$ \\
        3. For each cycle of the time-step (${n} \ge {k-1}$) \\
        \quad a) Compute the implicit state vectors $\boldsymbol{u}_{n+1}^{IM}$ and $\boldsymbol{v}_{n+1}^{IM}$ from Eqs.~(\ref{eqn:4thGE})-(\ref{eqn:4thEX}). \\
        \quad b) Compute the SAV $\Phi_{n+1}$ from Eq.~(\ref{eqn:4thBDFSAV}). \\
        \quad c) Update the state vectors $\boldsymbol{u}_{n+1}$ and $\boldsymbol{v}_{n+1}$ from Eqs.~(\ref{eqn:4thupdated})-(\ref{eqn:4thxi}). \\
        \quad d) If $\Phi_{n+1} <  \varepsilon_{tol} \psi$, then update the SAV $\Phi_{n+1}$ from Eq.~(\ref{eqn:4thRecoverning})
    \end{tabular}
\end{algorithm}

\subsection{Stability analysis}
\label{subsec:4th22}
In this section, we prove that the proposed IMEX-BDF$k$-SAV schemes are unconditionally energy stable, first for linear structural dynamics in Theorem~\ref{theorem:4th1} and subsequently for nonlinear structural dynamics in Theorem~\ref{theorem:4th2}. 
\begin{theorem}
    \label{theorem:4th1}
  For linear structural dynamics, in the absence of external forces $\boldsymbol{f}^{ext}$, the IMEX-BDF$k$-SAV schemes for any $k \ge 1$ are unconditionally energy stable such that
  \begin{equation}
      \Phi_{n+1} \le \Phi_n \label{eqn:4thLTheorem1}
  \end{equation}
  Furthermore, there exists $U^{(k)}>0$ such that
  \begin{equation}
      \Psi[\boldsymbol{u}_n,\boldsymbol{v}_n] \le \left(U^{(k)}\right)^2 \quad  \forall \, n \in [0, T/\Delta t] \label{eqn:4thLTheorem2}
  \end{equation}
\end{theorem}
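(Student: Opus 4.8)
The plan is to establish the two claims in order: first the one-step dissipation Eq.~(\ref{eqn:4thLTheorem1}), and then the uniform pseudo-energy bound Eq.~(\ref{eqn:4thLTheorem2}), transferring the former to the latter through the scaling in Eq.~(\ref{eqn:4thupdated}).

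For the dissipation estimate I would rewrite the discrete SAV evolution Eq.~(\ref{eqn:4thBDFSAV}) in the explicit one-step form
\[
  \Phi_{n+1}\left(1+\Delta t\,\frac{\Theta[\boldsymbol{u}_{n+1}^{IM},\boldsymbol{v}_{n+1}^{IM}]}{\Psi[\boldsymbol{u}_{n+1}^{IM},\boldsymbol{v}_{n+1}^{IM}]+\psi}\right)=\Phi_n .
\]
In the linear, force-free setting $\boldsymbol{f}^{NL}=\boldsymbol{f}^{ext}=\boldsymbol{0}$, the definition Eq.~(\ref{eqn:4thSAV_theta}) collapses to $\Theta[\boldsymbol{u}_{n+1}^{IM},\boldsymbol{v}_{n+1}^{IM}]=(\boldsymbol{v}_{n+1}^{IM})^{T}\boldsymbol{C}\,\boldsymbol{v}_{n+1}^{IM}\ge 0$ because $\boldsymbol{C}$ is positive semi-definite, while $\Psi+\psi>0$; hence the parenthesised multiplier is $\ge 1$. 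Coupling this with an induction that $\Phi_n>0$ (seeded by $\Phi_0=\Psi[\boldsymbol{u}_0,\boldsymbol{v}_0]+\psi>0$ and preserved because a positive number divided by a factor $\ge 1$ stays positive) yields $0<\Phi_{n+1}\le\Phi_n$, and therefore $\Phi_n\le\Phi_0$ for all $n$ (for $k\ge 2$ the reference level is the post-start value $\Phi_{k-1}$, the finitely many starting states being bounded directly).

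For the pseudo-energy bound, the degree-two homogeneity of $\Psi$ together with $\boldsymbol{u}_n=\Upsilon^{(k)}_n\boldsymbol{u}^{IM}_n$, $\boldsymbol{v}_n=\Upsilon^{(k)}_n\boldsymbol{v}^{IM}_n$ gives $\Psi[\boldsymbol{u}_n,\boldsymbol{v}_n]=(\Upsilon^{(k)}_n)^2\,\Psi[\boldsymbol{u}_n^{IM},\boldsymbol{v}_n^{IM}]$. Using Eq.~(\ref{eqn:4thxi}) to write $\Psi[\boldsymbol{u}_n^{IM},\boldsymbol{v}_n^{IM}]=\Phi_n/\Xi_n-\psi\le\Phi_n/\Xi_n$, I would reduce the claim to
\[
  \Psi[\boldsymbol{u}_n,\boldsymbol{v}_n]\le\Phi_n\,g(\Xi_n),\qquad g(x)=\frac{\bigl(1-(1-x)^{\beta^{(k)}}\bigr)^2}{x}.
\]

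The conceptually delicate point — and the main obstacle — is that $\Psi[\boldsymbol{u}_n^{IM},\boldsymbol{v}_n^{IM}]$ may diverge precisely when the underlying IMEX-BDF$k$ prediction is unstable, so boundedness can only come from the damping factor $(\Upsilon^{(k)}_n)^2$ shrinking fast enough; the technical heart is therefore a uniform-in-$n$ confinement of $\Xi_n$. This follows cleanly from the first part: since $\Phi_n\le\Phi_0$ and $\Psi[\boldsymbol{u}_n^{IM},\boldsymbol{v}_n^{IM}]+\psi\ge\psi$, the argument $\Xi_n=\Phi_n/(\Psi[\boldsymbol{u}_n^{IM},\boldsymbol{v}_n^{IM}]+\psi)$ lies in the fixed compact interval $[0,\Phi_0/\psi]$ independent of $n$. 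On this interval $g$ is continuous — the apparent $0/0$ at $x=0$ is removable because $1-(1-x)^{\beta^{(k)}}\sim\beta^{(k)}x$ forces $g(0)=0$ — so it attains a finite maximum $G^{(k)}=\max_{x\in[0,\Phi_0/\psi]}g(x)$. Combining the pieces gives $\Psi[\boldsymbol{u}_n,\boldsymbol{v}_n]\le\Phi_0\,G^{(k)}$, so it suffices to take $U^{(k)}=\sqrt{\Phi_0\,G^{(k)}}>0$ (enlarged, if necessary, by the finitely many starting values to cover $n<k-1$). The only care needed is to record that $\beta^{(k)}$ from Eq.~(\ref{eqn:4thbetaPara}) is a positive integer, which keeps $g$ real-valued and well behaved throughout.
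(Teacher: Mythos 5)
Your proof is correct, and the first half (rewriting Eq.~(\ref{eqn:4thBDFSAV}) as $\Phi_{n+1}=\Phi_n/\bigl(1+\Delta t\,\Theta/(\Psi+\psi)\bigr)$ with $\Theta=(\boldsymbol{v}^{IM})^{T}\boldsymbol{C}\boldsymbol{v}^{IM}\ge 0$, plus induction on positivity of $\Phi_n$) is essentially identical to the paper's. The second half reaches the same conclusion by a genuinely different mechanism. The paper factors $\Upsilon^{(k)}_{n+1}=\Xi_{n+1}P_{\beta^{(k)}-1}(\Xi_{n+1})$, bounds the polynomial factor by a constant of the form $2\sqrt{\psi}\,U^{(k)}/U$ with $U=\Phi_0$, and then absorbs the remaining factor of $\Psi[\boldsymbol{u}^{IM},\boldsymbol{v}^{IM}]$ through the AM--GM-type inequality $4\psi\,\Psi^{IM}\le(\Psi^{IM}+\psi)^2$. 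You instead use the exact identity $\Psi[\boldsymbol{u}_n^{IM},\boldsymbol{v}_n^{IM}]+\psi=\Phi_n/\Xi_n$ to collapse the whole product $(\Upsilon^{(k)}_n)^2\Psi^{IM}$ into $\Phi_n\,g(\Xi_n)$ for a single scalar function $g$, and then invoke continuity of $g$ on the compact interval $[0,\Phi_0/\psi]$ (to which $\Xi_n$ is confined by the first part) to extract the maximum. Both arguments rest on the same two pillars --- the monotone bound $\Phi_n\le\Phi_0$ and the fact that $\Upsilon^{(k)}$ vanishes to first order in $\Xi$ --- but your route makes the constant $U^{(k)}=\sqrt{\Phi_0\,G^{(k)}}$ fully explicit and avoids the paper's somewhat implicit normalization of the polynomial bound, at the cost of appealing to the extreme value theorem rather than an elementary algebraic inequality. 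Your remark that the $n<k-1$ starting values must be covered separately is a point the paper glosses over; including it makes the statement for all $n\in[0,T/\Delta t]$ airtight.
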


\begin{proof}
  For linear structural dynamics in the absence of external forces, the definition of $\Theta$ in Eq.~(\ref{eqn:4thSAV_theta}) ensures that $\Theta[\boldsymbol{u}_{n+1}^{IM},\boldsymbol{v}_{n+1}^{IM}] \ge 0$ at all times. 
  Further, from the definition of the SAV $\Phi$ in Eq.~(\ref{eqn:4thSAV_definition}), $\Psi[\boldsymbol{u}_{n+1}^{IM},\boldsymbol{v}_{n+1}^{IM}]+\psi > 0$.
  Thus, given $\Phi_n\ge0$, Eq.~(\ref{eqn:4thBDFSAV}) may be rewritten as:
  \begin{equation}
      \Phi_{n+1} = \dfrac{\Phi_n}{ 1+\Delta t\dfrac{\Theta[\boldsymbol{u}_{n+1}^{IM},\boldsymbol{v}_{n+1}^{IM}]}{\Psi[\boldsymbol{u}_{n+1}^{IM},\boldsymbol{v}_{n+1}^{IM}]+\psi} } 
      \label{eqn:4thTimestepping2}
  \end{equation}  
  ensuring that $\Phi_{n+1} \ge 0$.
  Next, multiplying Eq.~(\ref{eqn:4thBDFSAV}) by $\Delta t$ and noting that the factor on the right-hand side of Eq.~(\ref{eqn:4thBDFSAV}) is simply $\Xi_{n+1}$, as defined in Eq.~(\ref{eqn:4thxi}), we conclude that:
  \begin{equation}
      \Phi_{n+1}-\Phi_n = -\Delta t\Xi_{n+1}\Theta[\boldsymbol{u}_{n+1}^{IM},\boldsymbol{v}_{n+1}^{IM}] \le 0
  \end{equation}
  because, from Eq.~(\ref{eqn:4thxi}), $\Xi_{n+1} \ge 0$.
  This proves Eq.~(\ref{eqn:4thLTheorem1}) of this theorem.
  
  To prove Eq.~(\ref{eqn:4thLTheorem2}), let $U:=\Phi_0$.
  Then, Eq.~(\ref{eqn:4thLTheorem1}) ensures that $\Phi_{n+1} \le U$.
  From Eq.~(\ref{eqn:4thxi}),
  \begin{equation}
      \Xi_{n+1} = \dfrac{\Phi_{n+1}}{\Psi[\boldsymbol{u}_{n+1}^{IM},\boldsymbol{v}_{n+1}^{IM}]+\psi} \le \dfrac{U}{\Psi[\boldsymbol{u}_{n+1}^{IM},\boldsymbol{v}_{n+1}^{IM}]+\psi}
      \label{eqn:4thxi2}
  \end{equation}  
  From Eq.~(\ref{eqn:4thupsilon}), the scalar coefficient $\Upsilon^{(k)}_{n+1}$ can be represented as $\Upsilon^{(k)}_{n+1}=\Xi_{n+1} P_{\beta^{(k)}-1}(\Xi_{n+1})$, where $P_{\beta^{(k)}-1}$ is a polynomial of degree $\beta^{(k)}-1$.
  Note that the absolute value of the polynomial $P_{\beta^{(k)}-1}$ is bounded. 
  Further, since $U>0$ and $\psi>0$, there exists $U^{(k)}>0$ such that 
  \begin{equation}
      |P_{\beta^{(k)}-1}(\Xi^{n+1})| \le \dfrac{2\sqrt{\psi}}{U}U^{(k)}
      \label{eqn:4thPfunction}
  \end{equation} 
  From Eqs.~(\ref{eqn:4thxi2}) and (\ref{eqn:4thPfunction}), 
  \begin{equation}
      |\Upsilon^{(k)}_{n+1}|=|\Xi_{n+1} P_{\beta^{(k)}-1}(\Xi^{n+1})|\le \dfrac{2\sqrt{\psi}U^{(k)}}{\Psi[\boldsymbol{u}_{n+1}^{IM},\boldsymbol{v}_{n+1}^{IM}]+\psi}
      \label{eqn:4thProof1_eta}
  \end{equation}  
  From Eqs.~(\ref{eqn:4thEnergy}) and (\ref{eqn:4thupdated}), the pseudo-energy $\Psi$ at $t_{n+1}$ may be written as:
  \begin{equation}
       \Psi[\boldsymbol{u}_{n+1},\boldsymbol{v}_{n+1}] = \left(\Upsilon^{(k)}_{n+1}\right)^2 \Psi[\boldsymbol{u}_{n+1}^{IM},\boldsymbol{v}_{n+1}^{IM}] 
       \label{eqn:4thPsinextstep}
  \end{equation}
  Substituting Eq.~(\ref{eqn:4thProof1_eta}) into Eq.~(\ref{eqn:4thPsinextstep}), the following inequality is obtained:
  \begin{equation}
      \begin{array}{ll}
           \Psi[\boldsymbol{u}_{n+1},\boldsymbol{v}_{n+1}] 
           &\le \left(\dfrac{2\sqrt{\psi}U^{(k)}}{\Psi[\boldsymbol{u}_{n+1}^{IM},\boldsymbol{v}_{n+1}^{IM}]+\psi}\right)^2 \Psi[\boldsymbol{u}_{n+1}^{IM},\boldsymbol{v}_{n+1}^{IM}] \\
           &\le \left(U^{(k)}\right)^2
      \end{array}        
  \end{equation}
    Note that we use the following identity in the last step above:
    \begin{equation}
      \begin{array}{ll}
            &\dfrac{4 \psi \Psi[\boldsymbol{u}_{n+1}^{IM},\boldsymbol{v}_{n+1}^{IM}] }{\left(\Psi[\boldsymbol{u}_{n+1}^{IM},\boldsymbol{v}_{n+1}^{IM}]+\psi\right)^2} \le 1 \\
      \end{array} 
      \label{eqn:4thIdentity1}
    \end{equation}
    which in turn is obtained from the following statement:
    \begin{equation}
        {\left(\Psi[\boldsymbol{u}_{n+1}^{IM},\boldsymbol{v}_{n+1}^{IM}]-\psi\right)^2} = {\left(\Psi[\boldsymbol{u}_{n+1}^{IM},\boldsymbol{v}_{n+1}^{IM}]+\psi\right)^2} - 4 \psi \Psi[\boldsymbol{u}_{n+1}^{IM},\boldsymbol{v}_{n+1}^{IM}] \ge 0
    \end{equation}
  The proof is complete.
\end{proof}

\begin{theorem}
\label{theorem:4th2}
  For nonlinear structural dynamics, in the absence of external forces $\boldsymbol{f}^{ext}$,
  given $\psi \ge 2\Delta t^2||\boldsymbol{L}^{-1}\boldsymbol{f}^{NL}||^2$ for all $t$, where $\boldsymbol{M}=\boldsymbol{L}\boldsymbol{L}^T$ (Cholesky decomposition), there exists $U^{(k)}>0$ such that
  \begin{equation}
      \Psi[\boldsymbol{u}_n,\boldsymbol{v}_n] \le \left(U^{(k)}\right)^2 \quad  \forall \, n \in [0, T/\Delta t] \label{eqn:4thNLTheorem2}
  \end{equation}
\end{theorem}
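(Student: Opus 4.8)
The plan is to mirror the proof of Theorem~\ref{theorem:4th1}, concentrating the new work in re-establishing a uniform bound on the SAV $\Phi_n$ despite the loss of the sign condition $\Theta\ge 0$. In the nonlinear case, $\Theta[\boldsymbol{u}_{n+1}^{IM},\boldsymbol{v}_{n+1}^{IM}] = (\boldsymbol{v}_{n+1}^{IM})^T\boldsymbol{C}\boldsymbol{v}_{n+1}^{IM} + (\boldsymbol{v}_{n+1}^{IM})^T\boldsymbol{f}^{NL}$ is no longer guaranteed non-negative, so one cannot conclude $\Phi_{n+1}\le\Phi_n$ directly from the rewritten update Eq.~(\ref{eqn:4thTimestepping2}). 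Instead I would aim for the Gronwall-type relation $\Phi_{n+1}\le 2\Phi_n$ (the relation invoked as Eq.~(\ref{eqn:4thPhiRelation}) in the temporal-locking discussion), which is enough to bound $\Phi_n$ uniformly over the finite interval $[0,T]$.

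The key step, and the main obstacle, is to control the potentially negative nonlinear contribution $(\boldsymbol{v}_{n+1}^{IM})^T\boldsymbol{f}^{NL}$ entering the denominator of Eq.~(\ref{eqn:4thTimestepping2}). Using the Cholesky factorization $\boldsymbol{M}=\boldsymbol{L}\boldsymbol{L}^T$, I would write $(\boldsymbol{v}_{n+1}^{IM})^T\boldsymbol{f}^{NL} = (\boldsymbol{L}^T\boldsymbol{v}_{n+1}^{IM})^T(\boldsymbol{L}^{-1}\boldsymbol{f}^{NL})$ and apply Cauchy--Schwarz to get $|(\boldsymbol{v}_{n+1}^{IM})^T\boldsymbol{f}^{NL}| \le \|\boldsymbol{L}^T\boldsymbol{v}_{n+1}^{IM}\|\,\|\boldsymbol{L}^{-1}\boldsymbol{f}^{NL}\|$. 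Since $\|\boldsymbol{L}^T\boldsymbol{v}_{n+1}^{IM}\|^2 = (\boldsymbol{v}_{n+1}^{IM})^T\boldsymbol{M}\boldsymbol{v}_{n+1}^{IM} \le 2\Psi[\boldsymbol{u}_{n+1}^{IM},\boldsymbol{v}_{n+1}^{IM}]$ (because $\boldsymbol{K}$ is positive semi-definite in Eq.~(\ref{eqn:4thEnergy})), the nonlinear term is bounded by $\sqrt{2\Psi^{IM}}\,\|\boldsymbol{L}^{-1}\boldsymbol{f}^{NL}\|$. Dividing by $\Psi^{IM}+\psi$ and invoking AM--GM in the form $\Psi^{IM}+\psi\ge 2\sqrt{\psi\Psi^{IM}}$ eliminates the $\Psi^{IM}$ dependence and yields $\Delta t\,|(\boldsymbol{v}_{n+1}^{IM})^T\boldsymbol{f}^{NL}|/(\Psi^{IM}+\psi) \le \Delta t\,\|\boldsymbol{L}^{-1}\boldsymbol{f}^{NL}\|/\sqrt{2\psi}$. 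The hypothesis $\psi\ge 2\Delta t^2\|\boldsymbol{L}^{-1}\boldsymbol{f}^{NL}\|^2$ is calibrated precisely so that this quantity is at most $1/2$; together with the non-negativity of the damping term $(\boldsymbol{v}_{n+1}^{IM})^T\boldsymbol{C}\boldsymbol{v}_{n+1}^{IM}$, the denominator in Eq.~(\ref{eqn:4thTimestepping2}) stays $\ge 1/2$, delivering both $\Phi_{n+1}\ge 0$ and $\Phi_{n+1}\le 2\Phi_n$.

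With the Gronwall relation established, I would iterate to obtain $\Phi_n\le 2^n\Phi_0 \le 2^{T/\Delta t}\Phi_0 =: U$ for all $n\in[0,T/\Delta t]$, which is the uniform bound on $\Phi$ that played the role of $\Phi_0$ in the linear proof. From this point the argument is identical to Theorem~\ref{theorem:4th1}: bound $\Xi_{n+1}\le U/(\Psi^{IM}+\psi)$ via Eq.~(\ref{eqn:4thxi}), write $\Upsilon^{(k)}_{n+1}=\Xi_{n+1}P_{\beta^{(k)}-1}(\Xi_{n+1})$ and use boundedness of the polynomial to get $|\Upsilon^{(k)}_{n+1}|\le 2\sqrt{\psi}\,U^{(k)}/(\Psi^{IM}+\psi)$, then substitute into $\Psi[\boldsymbol{u}_{n+1},\boldsymbol{v}_{n+1}]=(\Upsilon^{(k)}_{n+1})^2\Psi^{IM}$ (Eq.~(\ref{eqn:4thPsinextstep})) and apply the identity in Eq.~(\ref{eqn:4thIdentity1}) to conclude $\Psi[\boldsymbol{u}_{n+1},\boldsymbol{v}_{n+1}]\le(U^{(k)})^2$. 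I expect the only genuinely new difficulty to be the denominator estimate of the second paragraph; everything after the uniform bound on $\Phi$ is a verbatim reuse of the linear stability proof.
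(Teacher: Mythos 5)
Your proposal is correct and follows essentially the same route as the paper's proof: control the ratio $\Delta t\,|\Theta|/(\Psi^{IM}+\psi)$ by $1/2$ using Cauchy--Schwarz with the Cholesky factor and the hypothesis on $\psi$, deduce $\Phi_m\le 2^m\Phi_0$, and then repeat the linear-case argument verbatim. The only cosmetic differences are that the paper obtains the ratio bound from the completed square $\left(\|\boldsymbol{L}^T\boldsymbol{v}_{n+1}^{IM}\|-2\Delta t\|\boldsymbol{L}^{-1}\boldsymbol{f}^{NL}\|\right)^2\ge 0$ after substituting the hypothesis into the denominator (where you use AM--GM on $\Psi^{IM}+\psi$, an equivalent inequality), and it reaches $\Phi_m\le 2^m\Phi_0$ by summing the SAV update and invoking the discrete Gronwall lemma rather than by your more direct per-step iteration of $\Phi_{n+1}\le 2\Phi_n$.
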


\begin{proof}
    For nonlinear structural dynamics in the absence of external forces, Eq.~(\ref{eqn:4thSAV_theta}) states:
    \begin{equation}
        \Theta [\boldsymbol{u}_{n+1}^{IM}, \boldsymbol{v}_{n+1}^{IM}] = \boldsymbol{v}_{n+1}^{IM^T} \left( \boldsymbol{C}\boldsymbol{v}_{n+1}^{IM} + \boldsymbol{f}^{NL}(\boldsymbol{u}_{n+1}^{IM}, \boldsymbol{v}_{n+1}^{IM}) \right) 
    \end{equation}
    Thus,
    \begin{equation}
        \begin{array}{ll}
        \left| \Theta[\boldsymbol{u}_{n+1}^{IM},\boldsymbol{v}_{n+1}^{IM}] \right| 
        &\le \left| \boldsymbol{v}_{n+1}^{IM^T} \boldsymbol{f}^{NL}(\boldsymbol{u}_{n+1}^{IM},\boldsymbol{v}_{n+1}^{IM}) \right| \\
        &\le \left| \boldsymbol{v}_{n+1}^{IM^T} \boldsymbol{L} \boldsymbol{L}^{-1} \boldsymbol{f}^{NL}(\boldsymbol{u}_{n+1}^{IM},\boldsymbol{v}_{n+1}^{IM}) \right| \\
        &\le ||\boldsymbol{L}^T\boldsymbol{v}_{n+1}^{IM} || 
            \, ||\boldsymbol{L}^{-1}\boldsymbol{f}^{NL}(\boldsymbol{u}_{n+1}^{IM},\boldsymbol{v}_{n+1}^{IM})|| 
        \end{array} 
        \label{eqn:4thNLforceEnum}
    \end{equation}
    From Eq.~(\ref{eqn:4thEnergy}) and choosing $\psi \ge 2\Delta t^2||\boldsymbol{L}^{-1}\boldsymbol{f}^{NL}||^2$,
    \begin{equation}
        \begin{array}{ll}
        \Psi[\boldsymbol{u}_{n+1}^{IM},\boldsymbol{v}_{n+1}^{IM}]+\psi
         &\ge \frac{1}{2}\boldsymbol{v}_{n+1}^{IM^T}\boldsymbol{M}\boldsymbol{v}_{n+1}^{IM}+ \psi \\
         &\ge \frac{1}{2}\boldsymbol{v}_{n+1}^{IM^T}\boldsymbol{M}\boldsymbol{v}_{n+1}^{IM}+2 \Delta t^2||\boldsymbol{L}^{-1}\boldsymbol{f}^{NL}||^2 \\
         &\ge \frac{1}{2}\boldsymbol{v}_{n+1}^{IM^T}\boldsymbol{L}\boldsymbol{L}^T\boldsymbol{v}_{n+1}^{IM}+2 \Delta t^2||\boldsymbol{L}^{-1}\boldsymbol{f}^{NL}||^2
        \label{eqn:4thNLforceEden}
        \end{array} 
    \end{equation}     
   Dividing inequality (\ref{eqn:4thNLforceEnum}) by inequality (\ref{eqn:4thNLforceEden}), we obtain:
    \begin{equation} \resizebox{0.91\linewidth}{!}{$
        \begin{array}{ll}
            \left| \dfrac{\Theta[\boldsymbol{u}_{n+1}^{IM},\boldsymbol{v}_{n+1}^{IM}]}  {\Psi[\boldsymbol{u}_{n+1}^{IM},\boldsymbol{v}_{n+1}^{IM}]+\psi} \right|
            &\le \dfrac{||\boldsymbol{L}^T\boldsymbol{v}_{n+1}^{IM} || 
            \, ||\boldsymbol{L}^{-1}\boldsymbol{f}^{NL}(\boldsymbol{u}_{n+1}^{IM},\boldsymbol{v}_{n+1}^{IM})|| } {\frac{1}{2} || \boldsymbol{L}^T\boldsymbol{v}_{n+1}^{IM} ||^2 +2\Delta t^2 ||\boldsymbol{L}^{-1}\boldsymbol{f}^{NL}(\boldsymbol{u}_{n+1}^{IM},\boldsymbol{v}_{n+1}^{IM})||^2} \\
            &\le \dfrac{1}{2\Delta t}
        \end{array} $}
        \label{eqn:4thNLforceE}
    \end{equation}    
    where, in the last step above, we use the fact:
    \begin{equation}
            {\left(|| \boldsymbol{L}^T\boldsymbol{v}_{n+1}^{IM} || - 2 \Delta t  ||\boldsymbol{L}^{-1}\boldsymbol{f}^{NL}(\boldsymbol{u}_{n+1}^{IM},\boldsymbol{v}_{n+1}^{IM})||\right)^2} \ge 0 \\
      \label{eqn:4thIdentity2}
    \end{equation}    
    Thus, given $\Phi_n>0$, Eq.~(\ref{eqn:4thTimestepping2}) and inequality~(\ref{eqn:4thNLforceE}) lead to:
    \begin{equation}
        \Phi_{n+1}=\Phi_n\left( 1 + \Delta t \dfrac{\Theta[\boldsymbol{u}_{n+1}^{IM},\boldsymbol{v}_{n+1}^{IM}]}{\Psi[\boldsymbol{u}_{n+1}^{IM},\boldsymbol{v}_{n+1}^{IM}]+\psi}  \right)^{-1} > 0
    \end{equation}    
    Then, summing up Eq.~(\ref{eqn:4thBDFSAV}) from $n=0$ to $m$, 
    \begin{equation}
        \begin{array}{ll}
             \Phi_{m+1}
             &= \Phi_0 - \Delta t \displaystyle\sum_{j=0}^{m}\dfrac{\Phi_{j+1}}  {\Psi[\boldsymbol{u}_{j+1}^{IM},\boldsymbol{v}_{j+1}^{IM}]+\psi} \Theta[\boldsymbol{u}_{j+1}^{IM},\boldsymbol{v}_{j+1}^{IM}] \\
             &\le \Phi_0 + \Delta t \displaystyle\sum_{j=0}^{m} \left| \dfrac{\Theta[\boldsymbol{u}_{j+1}^{IM},\boldsymbol{v}_{j+1}^{IM}]}  {\Psi[\boldsymbol{u}_{j+1}^{IM},\boldsymbol{v}_{j+1}^{IM}]+\psi} \right| \Phi_{j+1} \\
            &\le \Phi_0 + \dfrac{1}{2}\displaystyle\sum_{j=0}^{m}\Phi_{j+1}
        \end{array} 
    \end{equation}
    Applying the discrete Gronwall lemma (see Ref.~\cite{clark1987short}), it can be proved that
    \begin{equation}
        \Phi_{m} \le 2^m \Phi_0 \quad \forall \quad m \in [0,T/\Delta t]
        \label{eqn:4thPhiRelation}
    \end{equation}  
    which implies that
    \begin{equation}
      \Xi_{n+1} = \dfrac{\Phi_{n+1}}{\Psi[\boldsymbol{u}_{n+1}^{IM},\boldsymbol{v}_{n+1}^{IM}]+\psi} \le \dfrac{2^{n+1}\Phi_0}{\Psi[\boldsymbol{u}_{n+1}^{IM},\boldsymbol{v}_{n+1}^{IM}]+\psi}
      \label{eqn:4thZetaNL}
    \end{equation}  
    Recall $\Upsilon^{(k)}_{n+1}=\Xi_{n+1} P_{\beta^{(k)}-1}(\Xi_{n+1})$.
    Since $\Psi_0>0$ and $\psi>0$, there exists $U^{(k)}>0$ such that 
      \begin{equation}
          |P_{\beta^{(k)}-1}(\Xi^{n+1})| \le \dfrac{2\sqrt{\psi}}{2^{n+1}\Phi_0}U^{(k)}
          \label{eqn:4thPfunctionNL}
      \end{equation} 
    From Eqs.~(\ref{eqn:4thZetaNL}) and (\ref{eqn:4thPfunctionNL}), 
    \begin{equation}
      |\Upsilon^{(k)}_{n+1}|=|\Xi_{n+1} P_{\beta^{(k)}-1}(\Xi^{n+1})|\le \dfrac{2\sqrt{\psi}U^{(k)}}{\Psi[\boldsymbol{u}_{n+1}^{IM},\boldsymbol{v}_{n+1}^{IM}]+\psi}
      \label{eqn:4thProof2_eta}
    \end{equation}  
    From Eq.~(\ref{eqn:4thupdated}), Eq.~(\ref{eqn:4thProof2_eta}) implies
    \begin{equation}
      \begin{array}{ll}
           \Psi[\boldsymbol{u}_{n+1},\boldsymbol{v}_{n+1}] &= \left(\Upsilon^{(k)}_{n+1}\right)^2 \Psi[\boldsymbol{u}_{n+1}^{IM},\boldsymbol{v}_{n+1}^{IM}]  \\
           &\le \left(\dfrac{2\sqrt{\psi}U^{(k)}}{\Psi[\boldsymbol{u}_{n+1}^{IM},\boldsymbol{v}_{n+1}^{IM}]+\psi}\right)^2 \Psi[\boldsymbol{u}_{n+1}^{IM},\boldsymbol{v}_{n+1}^{IM}] \\
           &\le \left( U^{(k)} \right)^2
      \end{array}        
    \end{equation}
    where, in the final step above, Eq.~(\ref{eqn:4thIdentity1}) is used.
    The proof is complete.
\end{proof}

\subsection{Local truncation error analysis}
\label{subsec:4th23}
In this section, we analyze the local truncation error (LTE) of the proposed IMEX-BDF$k$-SAV schemes in linear structural dynamics.
Note that for linear structural dynamics, the governing second-order ODEs are obtained from Eq.~(\ref{eqn:4thGDE})  where:
\begin{align}
    \boldsymbol{f}^{int} = \boldsymbol{C}\dot{\boldsymbol{u}}(t)  + \boldsymbol{K}\boldsymbol{u}(t) 
    \label{eqn:4thEOMLinHomo}
\end{align}
In this case, the definition of $\Theta$ in Eq.~(\ref{eqn:4thSAV_theta}) simplifies to:
 \begin{align}
     \Theta=\dot{\boldsymbol{u}}^T (\boldsymbol{C}\dot{\boldsymbol{u}} -  \boldsymbol{f}^{ext})
     \label{eqn:4thSAV_thetaL}
 \end{align}
As the solutions for displacement and velocity vectors, $\boldsymbol{u}_{n+1}$ and $\boldsymbol{v}_{n+1}$ respectively, are computed by advancing from $t_n$ to $t_{n+1}$, the LTEs in these quantities can be obtained as:
\begin{align} 
    \boldsymbol{\tau}_{\boldsymbol{u}} = \boldsymbol{u}_{n+1}-\boldsymbol{u}(t_{n+1}), \quad
   \boldsymbol{\tau}_{\boldsymbol{v}} = \boldsymbol{v}_{n+1}-\dot{\boldsymbol{u}}(t_{n+1})
\end{align}
where ${\boldsymbol{u}}(t_{n+1})$ and $\dot{\boldsymbol{u}}(t_{n+1})$ denote the exact displacement and velocity vectors.
The exact solution can be obtained from a Taylor series expansion about $t_n$ as follows:
\begin{equation}
    \boldsymbol{u}(t_{n+1}) = \boldsymbol{u}(t_n) + \sum_{k=1}^{\infty}\frac{\Delta t^k}{k!}\frac{d^k\boldsymbol{u}(t_n)}{dt^k}, \quad
    \dot{\boldsymbol{u}}(t_{n+1}) = \dot{\boldsymbol{u}}(t_{n}) + \sum_{k=1}^{\infty}\frac{\Delta t^k}{k!}\frac{d^{k}\dot{\boldsymbol{u}}(t_n)}{dt^{k}}
    \label{eqn:4thTaylor}
\end{equation}
Note that, for LTE, we assume that $\boldsymbol{u}(t_n)=\boldsymbol{u}_n$ and $\dot{\boldsymbol{u}}(t_n)=\boldsymbol{v}_n$ are exact displacement and velocity vectors at $t_n$.
In this section, the external force vector is assumed to be continuous in time, then $\boldsymbol{f}^{ext}(t_{n+1})$ can be expressed as follows:
\begin{equation}
    \boldsymbol{f}^{ext}(t_{n+1}) = \boldsymbol{f}^{ext}(t_n) + \sum_{k=1}^{\infty}\frac{\Delta t^k}{k!}\frac{d^k\boldsymbol{f}^{ext}(t_n)}{dt^k}
    \label{eqn:4thTaylorForce}
\end{equation}

Using Eqs.~(\ref{eqn:4thTaylor}) and (\ref{eqn:4thTaylorForce}), it can be shown that the conventional BDF$k$ schemes are $k^{\textrm{th}}$-order accurate (see Ref~ \cite{iserles2009first} for details): 
\begin{equation}
    \boldsymbol{u}^{IM}_{n+1} - \boldsymbol{u}(t_{n+1}) \propto  C_{u} \Delta t^{k+1} + O(\Delta t^{k+2}), \quad \boldsymbol{v}^{IM}_{n+1} - \dot{\boldsymbol{u}}(t_{n+1}) \propto C_{v} \Delta t^{k+1} + O(\Delta t^{k+2})
    \label{eqn:4thBDFLTE}
\end{equation}
where $C_{u}$ and $C_{v}$ are constant coefficients of the leading-order terms in the LTEs in displacement and velocity vectors, respectively.
To compute the LTEs for the proposed IMEX-BDF$k$-SAV schemes, $\Xi_{n+1}$ is obtained by substituting Eq.~(\ref{eqn:4thTimestepping2}) into Eq.~(\ref{eqn:4thxi}) and using the definition of $\Theta$ in Eq.~(\ref{eqn:4thSAV_thetaL}):
\begin{equation}
    \begin{array}{ll}
        \Xi_{n+1}&=\dfrac{\Phi_n}{ \Psi[\boldsymbol{u}_{n+1}^{IM},\boldsymbol{v}_{n+1}^{IM}]+\psi + \Delta t \Theta[\boldsymbol{u}_{n+1}^{IM},\boldsymbol{v}_{n+1}^{IM}]} \\
        &=\dfrac{ \boldsymbol{v}_n^T\boldsymbol{M}\boldsymbol{v}_n + \boldsymbol{u}_n^T\boldsymbol{K}\boldsymbol{u}_n + 2\psi}{ \boldsymbol{v}^{IM^T}_{n+1}\boldsymbol{M}\boldsymbol{v}^{IM}_{n+1}+ \boldsymbol{u}^{IM^T}_{n+1}\boldsymbol{K}\boldsymbol{u}^{IM}_{n+1} + 2\psi + 2\Delta t \boldsymbol{v}_{n+1}^{IM^T} (\boldsymbol{C} \boldsymbol{v}_{n+1}^{IM} - \boldsymbol{f}^{ext}(t_{n+1})) }
    \end{array}    
    \label{eqn:4thXi1}
\end{equation}
Using Eq.~(\ref{eqn:4thBDFLTE}), Eq.~(\ref{eqn:4thXi1}) is rewritten as:
\begin{equation}
    \Xi_{n+1} \propto \dfrac{ \boldsymbol{v}_n^T\boldsymbol{M}\boldsymbol{v}_n + \boldsymbol{u}_n^T\boldsymbol{K}\boldsymbol{u}_n + 2\psi}{ 
    \left(\begin{array}{l}
        \dot{\boldsymbol{u}}(t_{n+1})^T\boldsymbol{M}\dot{\boldsymbol{u}}(t_{n+1}) + \boldsymbol{u}(t_{n+1})^T\boldsymbol{K}\boldsymbol{u}(t_{n+1}) + 2\psi \\
         + 2\Delta t\dot{\boldsymbol{u}}(t_{n+1})^T (\boldsymbol{C} \dot{\boldsymbol{u}}(t_{n+1}) - \boldsymbol{f}^{ext}(t_{n+1})) + O(\Delta t^{k+1})
    \end{array}\right)
    } \\
    \label{eqn:4thXi2}
\end{equation}
Using Eqs.~(\ref{eqn:4thTaylor}) and (\ref{eqn:4thTaylorForce}), the following terms in denominator of Eq.~(\ref{eqn:4thXi2}) can be written as:
\begin{align}
    \begin{array}{l}
    \dot{\boldsymbol{u}}(t_{n+1})^T\boldsymbol{M}\dot{\boldsymbol{u}}(t_{n+1}) + \boldsymbol{u}(t_{n+1})^T\boldsymbol{K}\boldsymbol{u}(t_{n+1}) + 2\Delta t \dot{\boldsymbol{u}}(t_{n+1})^T (\boldsymbol{C} \dot{\boldsymbol{u}}(t_{n+1})-\boldsymbol{f}^{ext}(t_{n+1})) \\
    \qquad \propto \boldsymbol{v}_n^T\boldsymbol{M}\boldsymbol{v}_n + \boldsymbol{u}_n^T\boldsymbol{K}\boldsymbol{u}_n + 2\Delta t \boldsymbol{v}_n^T ( \boldsymbol{M}\boldsymbol{a}_n + \boldsymbol{C} \boldsymbol{v}_n + \boldsymbol{K} \boldsymbol{u}_n - \boldsymbol{f}^{ext}(t_{n}))  + C_E \Delta t^2 + O(\Delta t^3)
    \\
    \qquad \propto \boldsymbol{v}_n^T\boldsymbol{M}\boldsymbol{v}_n + \boldsymbol{u}_n^T\boldsymbol{K}\boldsymbol{u}_n + C_E \Delta t^2 + O(\Delta t^3)
    \end{array}  \label{eqn:4thTaylorEnergy}
\end{align}
where $C_E$ is the coefficient of the leading-order term and $\boldsymbol{M}\boldsymbol{a}_n + \boldsymbol{C} \boldsymbol{v}_n + \boldsymbol{K} \boldsymbol{u}_n = \boldsymbol{f}^{ext}(t_{n})$.
Thus, Eq.~(\ref{eqn:4thXi2}) can be simplified to:
\begin{equation}
    \begin{array}{ll}
        \Xi_{n+1} &\propto \dfrac{ \boldsymbol{v}_n^T\boldsymbol{M}\boldsymbol{v}_n + \boldsymbol{u}_n^T\boldsymbol{K}\boldsymbol{u}_n + 2\psi}{ \boldsymbol{v}_n^T\boldsymbol{M}\boldsymbol{v}_n + \boldsymbol{u}_n^T\boldsymbol{K}\boldsymbol{u}_n + 2\psi + C_E \Delta t^{2} + O(\Delta t^{3})} \\
        & \propto 1 + \dfrac{C_E}{2\Phi_n} \Delta t^2 + O(\Delta t^3)
    \end{array}    
    \label{eqn:4thXiFinal}
\end{equation}
where $\Phi_n = \frac{1}{2}\left(\boldsymbol{v}_n^T\boldsymbol{M}\boldsymbol{v}_n + \boldsymbol{u}_n^T\boldsymbol{K}\boldsymbol{u}_n \right) + \psi$.
Since $\Upsilon^{(k)}_{n+1}=1-(1-\Xi_{n+1})^{\beta^{(k)}}$, Eq.~(\ref{eqn:4thXiFinal}) implies:
\begin{equation}
    \begin{array}{ll}
    \Upsilon^{(k)}_{n+1} &\propto 1 - \left(- \dfrac{C_E}{2\Phi_0}\right)^{\beta^{(k)}} \Delta t^{2\beta^{(k)}}
   \end{array}
   \label{eqn:4thUpsilonLTE}
\end{equation}
Finally, from Eqs.~(\ref{eqn:4thupdated}), (\ref{eqn:4thBDFLTE}), and (\ref{eqn:4thUpsilonLTE}), LTEs in displacement and velocity vectors are obtained as:
\begin{align} 
    \begin{array}{ll}
        \boldsymbol{\tau}_{\boldsymbol{u}} & = \boldsymbol{u}_{n+1}-\boldsymbol{u}(t_{n+1}) = \Upsilon^{(k)}_{n+1}\boldsymbol{u}^{IM}_{n+1}-\boldsymbol{u}(t_{n+1}) \\
        & \propto  C_u \Delta t^{k+1} - \left(- \dfrac{C_E}{2\Phi_0}\right)^{\beta^{(k)}}  \boldsymbol{u}(t_{n+1}) \Delta t^{2\beta^{(k)}}
    \end{array}    \\
    \begin{array}{ll}
       \boldsymbol{\tau}_{\boldsymbol{v}} & = \boldsymbol{v}_{n+1}-\dot{\boldsymbol{u}}(t_{n+1}) = \Upsilon^{(k)}_{n+1}\boldsymbol{v}^{IM}_{n+1}-\dot{\boldsymbol{u}}(t_{n+1}) \\
       & \propto  C_v \Delta t^{k+1} - \left(- \dfrac{C_E}{2\Phi_0}\right)^{\beta^{(k)}} \dot{\boldsymbol{u}}(t_{n+1}) \Delta t^{2\beta^{(k)}}
   \end{array}
\end{align}
Since $2\beta^{(k)} > k+1$, the proposed IMEX-BDF$k$-SAV schemes leads to $k^{\textrm{th}}$-order accuracy.
Thus, the LTEs for the underlying BDF$k$ schemes are preserved:
\begin{equation}
    \boldsymbol{\tau}_{\boldsymbol{u}} \propto  C_{u} \Delta t^{k+1} + O(\Delta t^{k+2}), \quad
    \boldsymbol{\tau}_{\boldsymbol{v}} \propto C_{v} \Delta t^{k+1} + O(\Delta t^{k+2})
\end{equation}
It is important to note that with an increase in $\beta^{(k)}$, the energy bound $\left(U^{(k)}\right)^2$ also increases (see Eqs.~(\ref{eqn:4thPfunction}) and (\ref{eqn:4thPfunctionNL})).
Therefore, as given in Eq.~(\ref{eqn:4thbetaPara}), we recommend selecting the smallest $\beta^{(k)}$ such that $2\beta^{(k)} > k+1$.
This is in contrast to the value $\beta^{(k)} = k+1$ suggested by Huang and Shen \cite{huang2022new} for general dissipative systems because, in their derivation, $\Xi_{n+1}$ is a first-order approximation to 1. 
As shown in Eq.~(\ref{eqn:4thXiFinal}), $\Xi_{n+1}$ for the proposed IMEX-BDF$k$-SAV schemes is a second-order approximation to 1 for any value of $k$.
Thus, for structural dynamics, we can use a value of $\beta^{(k)}$ that is about half of the value used for general dissipative systems.
This leads to a lower energy bound $\left( U^{(k)} \right)^2$ and consequently better performance of the proposed IMEX-BDF$k$-SAV schemes.

To verify the theoretical LTEs found above, we compute the LTEs for a typical modal equation of motion with linear internal forces:
\begin{align}
    & \ddot{u}(t) + 2\zeta \omega_0 \dot{u}(t) + \omega_0^2 u(t) = f^{ext}(t) \label{eqn:4thSDOFEqn} \\
    & u(0) = u_0, \quad \dot{u}(0) = v_0
\end{align}
where $\omega_0$, $\zeta$, $u$, and $f^{ext}$ are the natural frequency, damping ratio, modal displacement, and modal external force, respectively.
We consider a sinusoidal external forcing function: $f^{ext}(t)=p_0 \sin{\omega_f t}$, where $p_0$ and $\omega_f$ are the amplitude and excitation frequency, respectively.
Note that the exact solution to this problem is given as: 
\begin{equation}
    \begin{array}{ll}
         u(t)=&e^{-\zeta \omega_0 t} (A_f \cos(\omega_d t) + B_f \sin(\omega_d t))  \\
         &+ \dfrac{p_0}{\eta} \left( (\omega_0^2-\omega_f^2) \sin(\omega_f t) - 2 \omega_0 \omega_f \zeta \cos(\omega_f t)\right)
    \end{array}
\end{equation}
where
\begin{align}
    &A_f = u_0 + 2 \frac{p_0}{\eta} \zeta \omega_0 \omega_f, \quad
    B_f = \frac{v_0+\zeta \omega_0 u_0}{\omega_d} + \frac{p_0}{\eta}\frac{\omega_f}{\omega_d}(\omega_f^2-\omega_0^2+2 \zeta^2 \omega_0^2), \\
    &\eta = (\omega_0^2-\omega_f^2)^2+(2 \zeta \omega_0 \omega_f)^2
\end{align}
Using the exact solution above and the analysis in section~\ref{subsec:4th23}, the LTEs in displacement and velocity for the modal equation of motion are computed and reported in Appendix {\ref{ap:4thLTEforced}}.
Note that the LTEs in displacements and velocities are found to be of the order of $\Delta t^{k+1}$ for $1 \le k \le 5$.
Thus, the global errors in displacements and velocities can be expected to be of the order of $\Delta t^{k}$ for $1 \le k \le 5$.

\subsection{Convergence tests}
\label{subsec:4th24}

To verify the theoretical findings in section~\ref{subsec:4th23}, we compute global errors for the proposed IMEX-BDF$k$-SAV schemes when solving linear and nonlinear problems.
To quantify global errors, we define the following error norm:
\begin{align}
    \begin{array}{c}
        \mathcal{E}(\boldsymbol{z}) = \sqrt{ \dfrac{\sum_{n=0}^{T/\Delta t}{\left( z_n-\tilde{z}_n\right)^2}} {\sum_{n=0}^{T/\Delta t}{\left( \tilde{z}_n \right)^2}}  }
    \end{array}
    \label{eqn:4thErrorNorm}
\end{align}
where $\boldsymbol{z}$ may be displacement, velocity or acceleration and $\tilde{\boldsymbol{z}}$ denotes the exact solution or a reference solution, when an exact solution is not available.

\subsubsection{Linear structural dynamics}
We consider a damped single-degree-of-freedom (SDOF) problem, as given in Eq.~({\ref{eqn:4thSDOFEqn}}), with damping ratio $\zeta=0.2$, natural frequency $\omega_0=2\pi$, and the corresponding natural period $T_0=1$.
The duration of the simulation is taken to be $[0,T]$, where $T=10$.

We apply zero initial displacement and velocity and measure global error in the forced response under a load given as $f^{ext}(t)= \sin{2\omega_0 t}$.
For the proposed IMEX-BDF$k$-SAV schemes, we need an estimate of the maximum energy $\Psi$ for this problem, which can be obtained by assuming that the maximum displacement will be twice the static response i.e. $u_{max} \approx 2 p_0/\omega_0^2$.
Thus, the maximum energy stored in the underlying undamped system would be $\Psi_{max} \approx \frac{1}{2} {\omega_0}^2 u_{max}^2 \approx 0.05$.
Following the guideline for the choice of SAV in Eq.~(\ref{eqn:4thSAV_definition}), we choose $\psi = 100\Psi_{max} = 5$.
Fig. \ref{fig:4thConvergenceForced} shows the global error in all kinematic quantities using the proposed IMEX-BDF$k$-SAV schemes with $\psi = 5$.
For comparison, the global errors from the standard Bathe method with $\gamma=0.5$ \cite{bathe2005composite,bathe2007conserving}, Newmark trapezoidal rule (TR) \cite{newmark1959method}, and the generalized-$\alpha$ method with $\rho_\infty=0$ \cite{chung1993time} are also shown.
From these plots, we can verify that the proposed IMEX-BDF$k$-SAV schemes are $k^{\textrm{th}}$-order accurate in displacement, velocity, and acceleration for linear problems in structural dynamics.

To investigate the effect of $\psi$, we plot the global errors for different values of $\psi/\Psi_{max}$ for two values of $\Delta t/T_0$ in Fig.~\ref{fig:4thConvergenceForcedpsi}.
We observe that global errors do depend on the value of $\psi$, however, 
as for $\psi/\Psi_{max}$ greater than $100$, the global errors stabilize.

\begin{figure}[!htbp]
    \centering
    \includegraphics[width=0.8\linewidth]{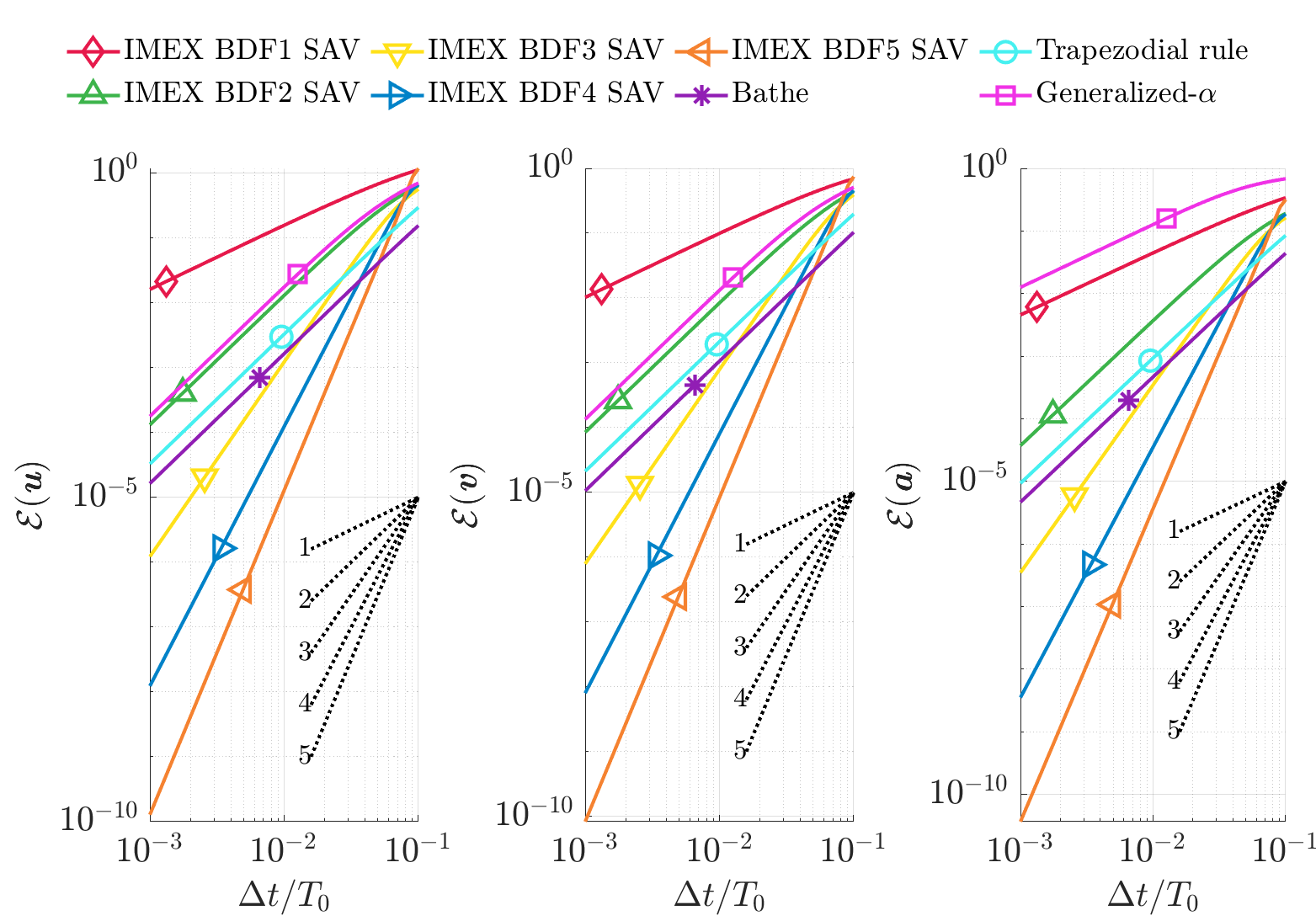}
    \caption{Error norms for the damped SDOF problem with $f^{ext}=\sin{2\omega_0 t}$ and $u_0=v_0=0$.
    }
    \label{fig:4thConvergenceForced}
\end{figure}

\begin{figure}[!htbp]
    \centering
    {\includegraphics[width=0.85\textwidth]{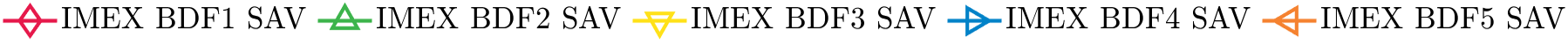}}
    \subcaptionbox{$\Delta t/T_0=0.1$}{\includegraphics[width=0.4\textwidth]{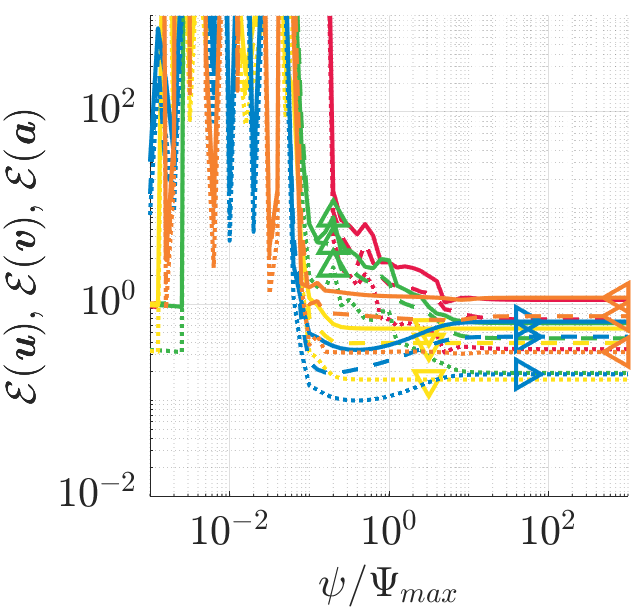}}
    \subcaptionbox{$\Delta t/T_0=0.001$}{\includegraphics[width=0.4\textwidth]{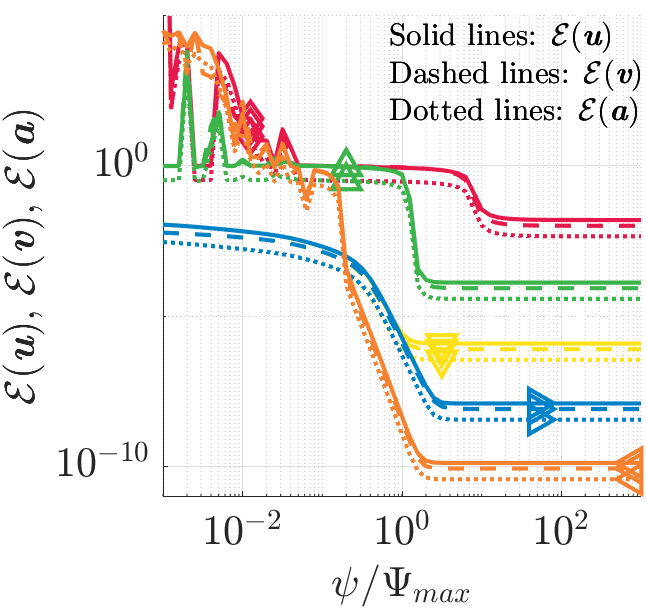}}
    \caption{Error norms for the damped SDOF problem with $f^{ext}=\sin{2\omega_0 t}$ and $u_0=v_0=0$ using the proposed BDF$k$ SAV schemes for various values of $\psi/\Psi_{max} \in [10^{-3},10^3]$ at intervals of $10^{0.1}$.}
    \label{fig:4thConvergenceForcedpsi}
\end{figure}

\subsubsection{Nonlinear structural dynamics}
\label{subsubsec:4th242}
Here, we consider two types of nonlinear SDOF problems: a Van der Pol oscillator and a Duffing oscillator.
The governing equations of these problems are expressed as
\begin{align}
    &\text{Van der Pol oscillator:} \quad m \ddot{u} + k_1 u - \mu (1-u^2)\dot{u} = f^{ext} \label{eqn:4thEqnVanderPol} \\
    &\text{Duffing oscillator:} \quad \quad \enspace \; \: m \ddot{u} + c \dot{u} + k_1 u + k_3 u^3 = f^{ext} \label{eqn:4thEqnDuffing}
\end{align}

First, we consider the Van der Pol oscillator with $m=1$, $k_1=1$, $\mu=2$, and $f^{ext}=0$ over the time duration $[0,T]$, where $T=15$.
A nonzero initial displacement and a zero initial velocity is applied: $u_0=2$ and $v_0=0$.
As estimate of the maximum pseudo-energy $\Psi$ is obtained from an approximate solution $\breve{u}$ and $\breve{v}$ of a limit cycle for any value of $\mu>0$ (see Ref.~\cite{lopez2000limit} for details):
\begin{equation}
    \breve{v}(\breve{u}) = 
    \begin{cases}
        \sqrt{3-\breve{u}^2} & \textrm{if} \: -\sqrt{3}<\breve{u}<-1 \\   
        \sqrt{3-\breve{u}^2} + \mu \left( - \frac{1}{3}\breve{u}^3+\breve{u}+\frac{2}{3}\right) & \textrm{if} \: -1<\breve{u}<\sqrt{3}
    \end{cases}    
    \label{eqn:4thVanApp}
\end{equation}
This approximate solution is compared to a reference solution for the Van der Pol oscillator in Fig.~\ref{fig:4thVanderPolRef}. 
Using Eq.~(\ref{eqn:4thVanApp}), the pseudo-energy $\Psi$ is approximated as $\frac{1}{2}m \breve{v}^2 + \frac{1}{2}k_1 \breve{u}^2$ and the maximum pseudo-energy is estimated to be $\Psi_{max} \approx 8.9$.

\begin{figure}[!htbp]
    \centering
    \includegraphics[width=0.5\linewidth]{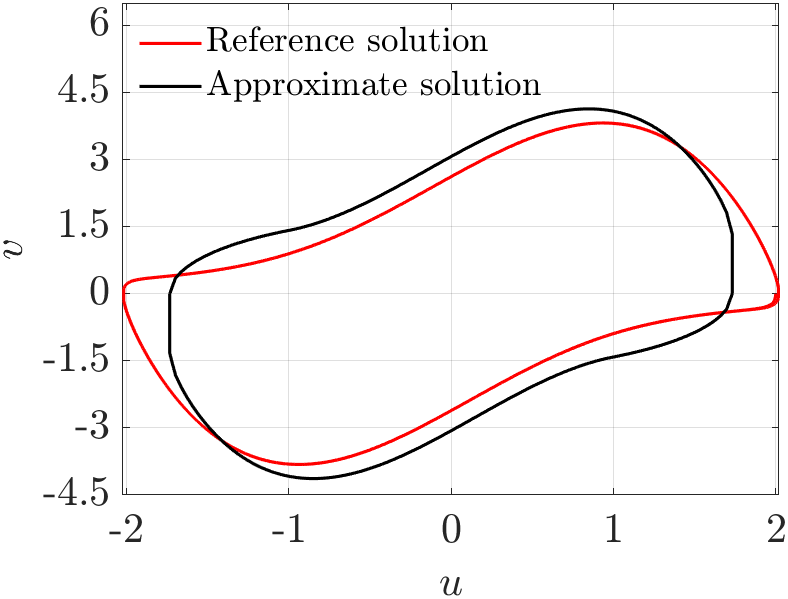}
    \caption{Phase portrait of the SDOF Van der Pol problem with $f^{ext}(t)=0$, $u_0=2$, and $v_0=0$.
    }
    \label{fig:4thVanderPolRef}
\end{figure}

Next, we consider the Duffing oscillator with $m=1$, $c=1$, $k_1=1$, $k_3=20$, and $f^{ext}(t)=p_0\cos(2\pi t)$ over the time duration of $[0,T]$.
For this problem, we choose the load intensity $p_0 = 500$ and time duration $T=1$ and initial conditions as $u_0=5$ and $v_0=-10$.
If we neglect the initial velocity and nonlinear effects, then we can estimate the maximum displacement of the oscillator as $u_{max} \approx u_0 + 2p_0/k_1$.
Thus, the maximum pseudo-energy stored in the system may be approximated as $\Psi_{max} \approx \frac{1}{2} m v_0^2 + \frac{1}{2} k_1 u_{max}^2 \approx 5 \times 10^5$.

Reference solutions for both problems, the Van der Pol oscillator and Duffing oscillator, are obtained by solving them with the standard Bathe scheme \cite{bathe2005composite,bathe2007conserving} and the $4^\textrm{th}$-order Runge-Kutta (RK4) scheme using a time-step of $\Delta t = 10^{-7}$ for both. 
The difference in the solutions obtained from these schemes is quantified using Eq.~(\ref{eqn:4thErrorNorm}) where $\boldsymbol{z}$ is taken to be the RK4 solution and $\boldsymbol{\tilde{z}}$ is taken to be the solution obtained from the Bathe scheme.    
The error (or difference) between these solutions is found to be of the order of $10^{-11}$ for the Van der Pol oscillator and of the order of $10^{-13}$ for the Duffing oscillator.

Next, both problems are solved using the proposed IMEX-BDF$k$-SAV schemes and other time integration methods such as the Bathe, TR, and generalized-$\alpha$ schemes.
For implicit schemes such as the Bathe, TR and generalized-$\alpha$, Netwon-Raphson iterations are performed at each time-step until the absolute residual reaches a tolerance of $10^{-7}$.
Note that the proposed IMEX-BDF$k$-SAV schemes do not require iterations because nonlinear terms are treated explicitly.
Figs.~\ref{fig:4thConvergenceHomoNL} and \ref{fig:4thConvergenceForcedNL} show global errors in kinematic quantities using the proposed IMEX-BDF$k$-SAV schemes with $\psi = 100\Psi_{max}$, standard Bathe scheme, TR, and generalized-$\alpha$ scheme for the unforced Van der Pol oscillator and the forced Duffing oscillator, respectively.
It can be observed that the proposed IMEX-BDF$k$-SAV schemes give $k^{\textrm{th}}$-order accurate solutions for both nonlinear problems.
Lastly, note that errors for all time-steps considered are larger than about $10^{-9}$ for the Van der Pol oscillator and $10^{-11}$ for the and Duffing oscillator, which are much larger than the difference between the Bathe and RK4 solutions that were used as a reference for these problems.

Figs.~\ref{fig:4thConvergenceHomoNLpsi} and \ref{fig:4thConvergenceForcedNLpsi} show the variation of global errors for the proposed IMEX-BDF$k$-SAV schemes for different values of $\psi/\Psi_{max}$.
As with linear problems, the errors depend upon the choice of $\psi$, but for values of $\psi/\Psi_{max} > 10^{2}$, the errors stabilize.
Thus, we suggest choosing a value of $\psi = 100\Psi_{max}$ for nonlinear problems as well.

\begin{figure}[!htbp]
    \centering
    \includegraphics[width=0.8\linewidth]{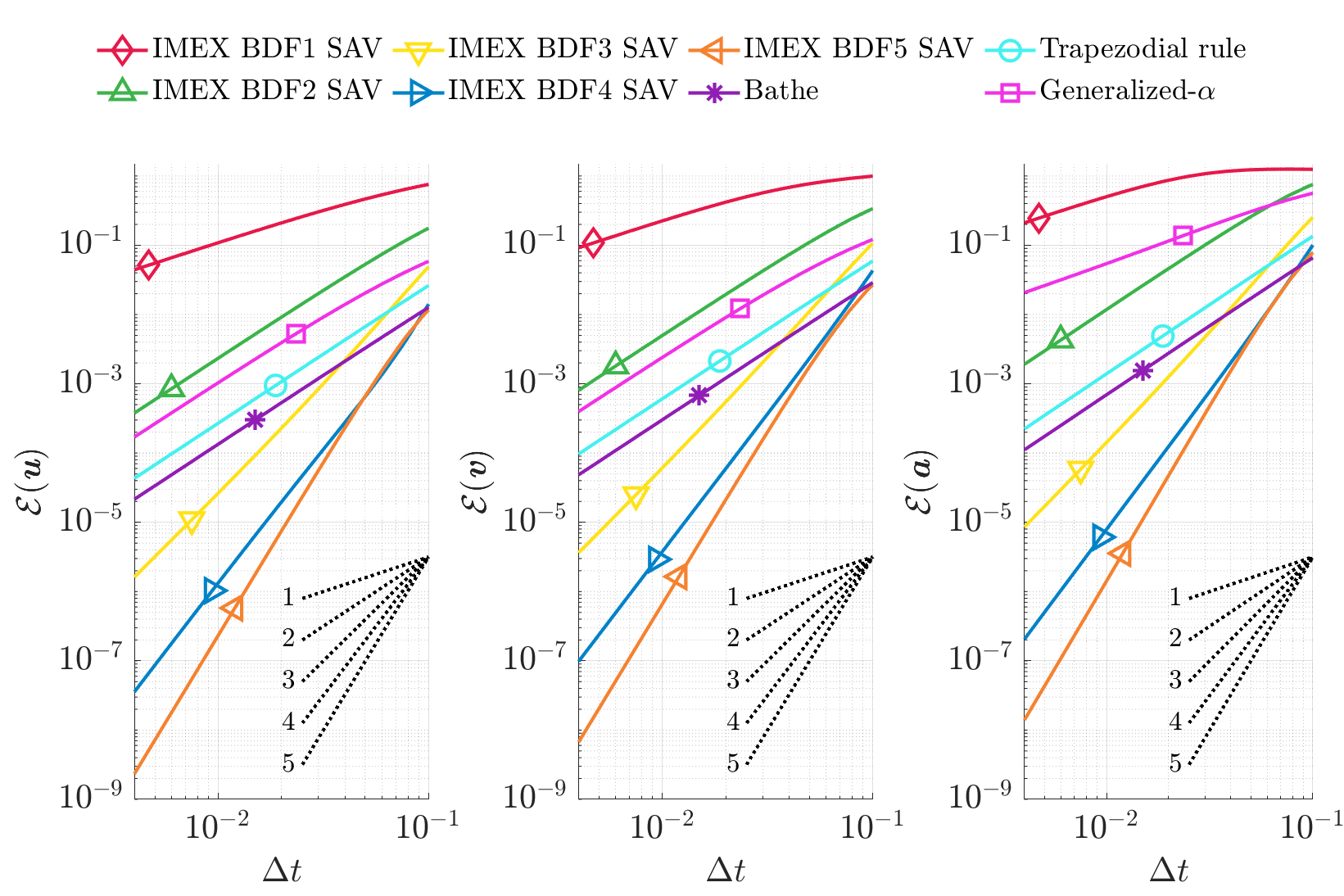}
    \caption{Error norms for the SDOF Van der Pol problem with $f^{ext}(t)=0$, $u_0=2$, and $v_0=0$ for 20 different time-steps between $4\times10^{-3}$ to $10^{-1}$.
    }
    \label{fig:4thConvergenceHomoNL}
\end{figure}

\begin{figure}[!htbp]
    \centering
    \includegraphics[width=0.8\linewidth]{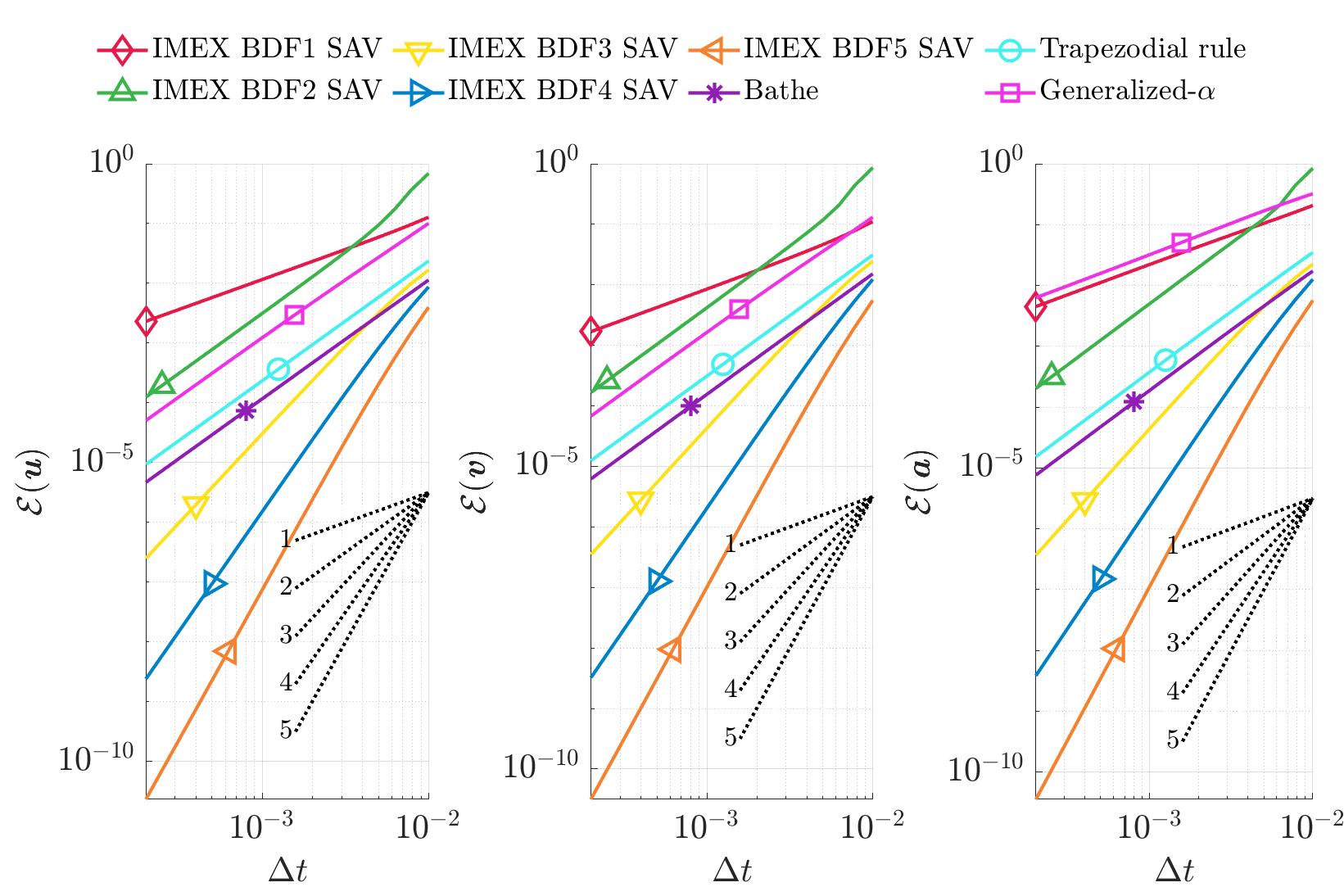}
    \caption{Error norms for the SDOF Duffing problem with $f^{ext}(t)=500\cos{2\pi t}$, $u_0=5$, and $v_0=-10$ for 20 different time-steps between $2\times10^{-4}$ to $10^{-2}$.
    }
    \label{fig:4thConvergenceForcedNL}
\end{figure}

\begin{figure}[!htbp]
    \centering
    {\includegraphics[width=\textwidth]{pics/4thObjective/Convergence/legend_IMEX}}
    \subcaptionbox{$\Delta t=0.1$}{\includegraphics[width=0.4\textwidth]{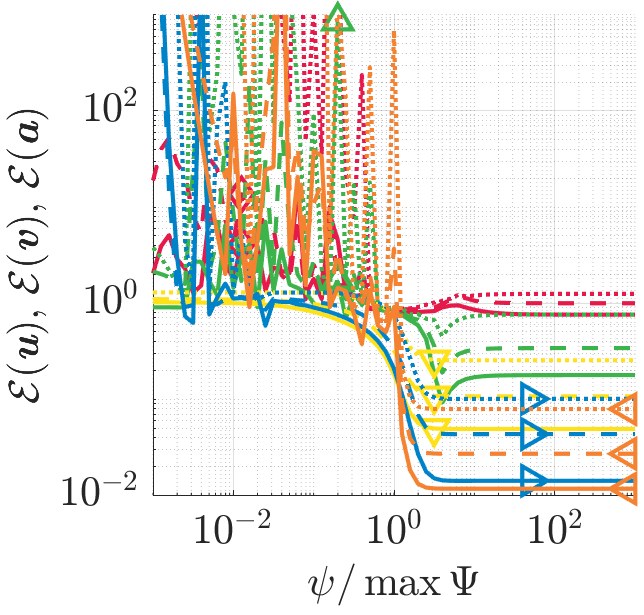}}
    \subcaptionbox{$\Delta t=0.004$}{\includegraphics[width=0.4\textwidth]{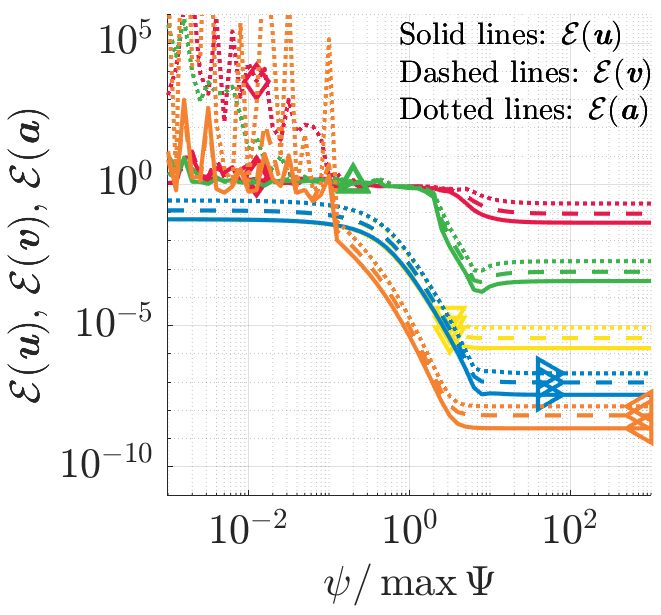}}
    \caption{Error norms for the SDOF Van der Pol problem with $f^{ext}(t)=0$, $u_0=2$, and $v_0=0$, using the proposed IMEX-BDF$k$-SAV schemes for various values of $\psi/\Psi_{max} \in [10^{-3},10^3]$ at intervals of $10^{0.1}$.}
    \label{fig:4thConvergenceHomoNLpsi}
\end{figure}

\begin{figure}[!htbp]
    \centering
    {\includegraphics[width=\textwidth]{pics/4thObjective/Convergence/legend_IMEX}}
    \subcaptionbox{$\Delta t=0.01$}{\includegraphics[width=0.4\textwidth]{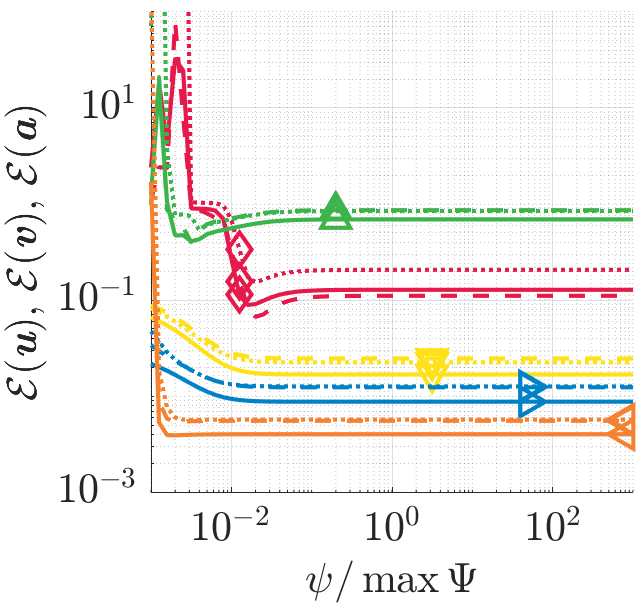}}
    \subcaptionbox{$\Delta t=0.0002$}{\includegraphics[width=0.4\textwidth]{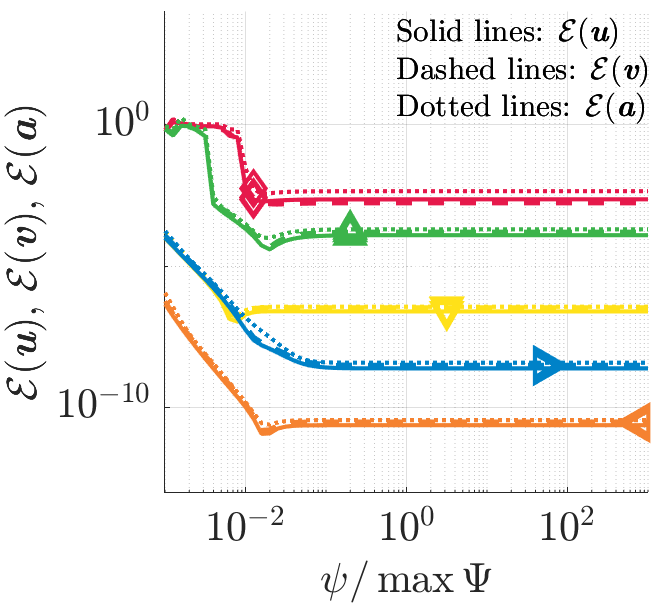}}
    \caption{Error norms for the SDOF Duffing problem with $f^{ext}(t)=500\cos{2\pi t}$, $u_0=5$, and $v_0=-10$, using the proposed IMEX-BDF$k$-SAV schemes for various values of $\psi/\Psi_{max} \in [10^{-3},10^{3}]$ at intervals of $10^{0.1}$.}
    \label{fig:4thConvergenceForcedNLpsi}
\end{figure}

\newpage
\section{Numerical examples}
\label{sec:4th3}
In this section, several numerical examples for nonlinear structural dynamics are presented to demonstrate the performance of the proposed IMEX-BDF$k$-SAV schemes: 1) a simple pendulum model, 2) a spring-pendulum model, 3) a multi-degree-of-freedom (MDOF) Duffing oscillator, and 4) a 9-story building.
Solutions obtained from the proposed IMEX-BDF$k$-SAV schemes with $\psi = 100\Psi_{max}$, Bathe scheme \cite{bathe2005composite,bathe2007conserving}, TR \cite{newmark1959method}, Generalized-$\alpha$ scheme \cite{chung1993time}, Kim's $4^{\textrm{th}}$-order explicit scheme \cite{kim2019higher}, and central difference (CD) scheme are compared. 
Note that when nonlinear internal forces depend on velocities, we employ the central difference method proposed by Park and Underwood \cite{park1980variable}, to avoid the need for iterations at each time-step.
To compare accuracy, we define three measures of error: the local instantaneous error $\epsilon^i_n$, maximum instantaneous error $\epsilon_n$, and maximum error $\epsilon$:
\begin{align}
    \epsilon^i_n(\boldsymbol{z}) = \frac{|z^i_n - \tilde{z}^i_n|}{\displaystyle\max_{i,n}\tilde{z}^i_n - \displaystyle\min_{i,n}\tilde{z}^i_n}, \quad
    \epsilon_n(\boldsymbol{z}) = \displaystyle\max_{i}\epsilon^i_n(\boldsymbol{z}), \quad
    \epsilon(\boldsymbol{z}) = \displaystyle\max_{n}\epsilon_n(\boldsymbol{z})
    \label{eqn:4thErr_definition}
\end{align}
where $i$ denotes the degree of freedom ($i=1,2,..,N_i)$, where $N_i$ is the total number of degree of freedom.
The variables $z^i_n$ and $\tilde{z}^i_n$ represent the numerical and reference solutions, respectively, at a degree of freedom $i$ at time $t_n$.

\subsection{Simple pendulum model}
We consider a simple pendulum -- a benchmark problem for demonstrating the performance of nonlinear systems \cite{kim2017new,kim2018improved,fung2001solving}. 
The equation of motion for this problem is given as: 
\begin{align}
    &\ddot{\theta}(t) + \dfrac{g}{L}\sin{(\theta(t))}=0 \\
    &\theta(t=0)=\theta_0, \quad \dot{\theta}(t=0)=v_0
\end{align}
where $\theta$, $g$, and $L$ are the angle of swing, gravitational constant, and pendulum length, respectively.
The exact solution of this problem can be obtained from the analytical relation between $\theta(t)$ and $t$ \cite{fung2001solving}:
\begin{equation}
    t = \pm \int_{\theta_0}^{\theta(t)}\dfrac{1}{\sqrt{\left(v_0\right)^2+2\dfrac{g}{L}(\cos{z}-\cos{\theta_0})}}  dz
    \label{eqn:4thPendulumExact}
\end{equation}
Note that in the pendulum problem, we can modify the level of nonlinearity by controlling $\theta_0$ and $v_0$.
As $\theta_0$ and/or $v_0$ increase, the system becomes more nonlinear.
Further details of this simple pendulum problem and comments on the importance of the problem are given in Refs.~\cite{kim2018improved,fung2001solving}.

The parameters chosen for this pendulum problem are
$L = g = 1$, and initial conditions are $\theta_0=0$ and $v_0=1.95$ for which the maximum angle of swing of the pendulum computed from Eq.~(\ref{eqn:4thPendulumExact}), $\theta_{max}=2.6934$.
The time duration of the simulation is $[0,2T_1]$, where $T_1=11.6576$ is the period of system obtained from Eq.~(\ref{eqn:4thPendulumExact}).
During the simulation, the maximum pseudo-energy is approximated as $\Psi_{max} \approx \frac{1}{2}v_0^2 \approx 1.9$ because the system is conservative.
Thus, we choose $\psi=100\Psi_{max}=190$ for the proposed IMEX-BDF$k$-SAV schemes.

Fig.~\ref{fig:4thSimplePendulum_history} shows numerical results for the angle of swing for different time integration schemes.
The time-step $\Delta t$ for the proposed IMEX-BDF$k$-SAV schemes is chosen as $T_1/20$ and $T_1/100$. 
For computational cost parity, the time-steps of the comparative schemes are chosen as $n_{sub} \times \Delta t$, where $n_{sub}$ is the number of sub-stages associated with the time integration scheme.
For instance, $n_{sub}$ for the Bathe scheme is 2 and for the Kim's $4^\textrm{th}$-order explicit scheme $n_{sub}$ is 4.
Note that for the TR scheme, generalized-$\alpha$ scheme, and CD scheme, number of sub-stages $n_{sub}=1$.
For sake of clarity, we do not include the conventional IMEX-BDF$k$ schemes (without SAV) in this figure because their response is almost identical to that of the corresponding IMEX-BDF$k$-SAV schemes for this problem.
We observe that when $\Delta t = T_1/20$, several schemes drift away from the exact solution because when $|\theta|$ exceeds $\pi$ then the pendulum makes more than a half-revolution and consequently cannot maintain oscillations around the equilibrium position.
With a finer time-step of $\Delta t = T_1/100$, none of the methods suffer from this problem, however, IMEX-BDF2 and IMEX-BDF2-SAV have an uncharacteristically large period elongation.
IMEX-BDF2 and IMEX-BDF2-SAV schemes seem to exhibit this behavior specifically for this problem and across different values of $\Delta t$.

\begin{figure}[!htbp]
    \centering
    {\includegraphics[width=0.52\textwidth]{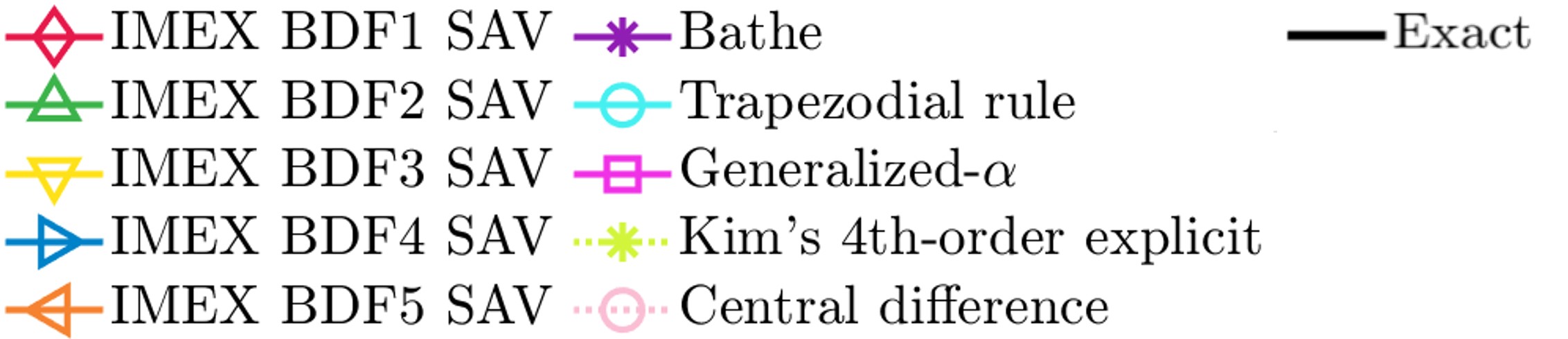}}
    \subcaptionbox{$\Delta t=T_1/20$}{\includegraphics[width=0.49\textwidth]{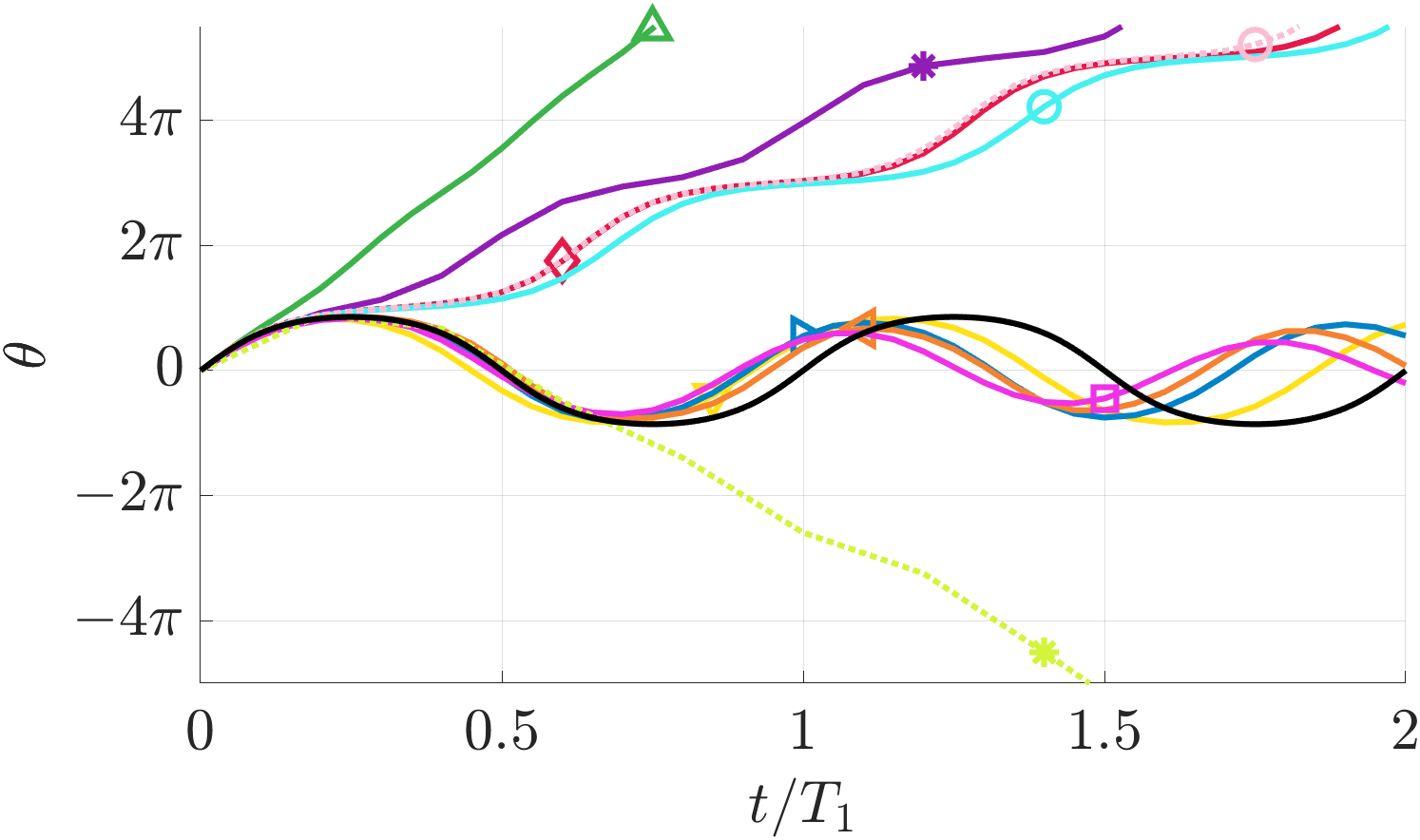}}
    \subcaptionbox{$\Delta t=T_1/100$}{\includegraphics[width=0.49\textwidth]{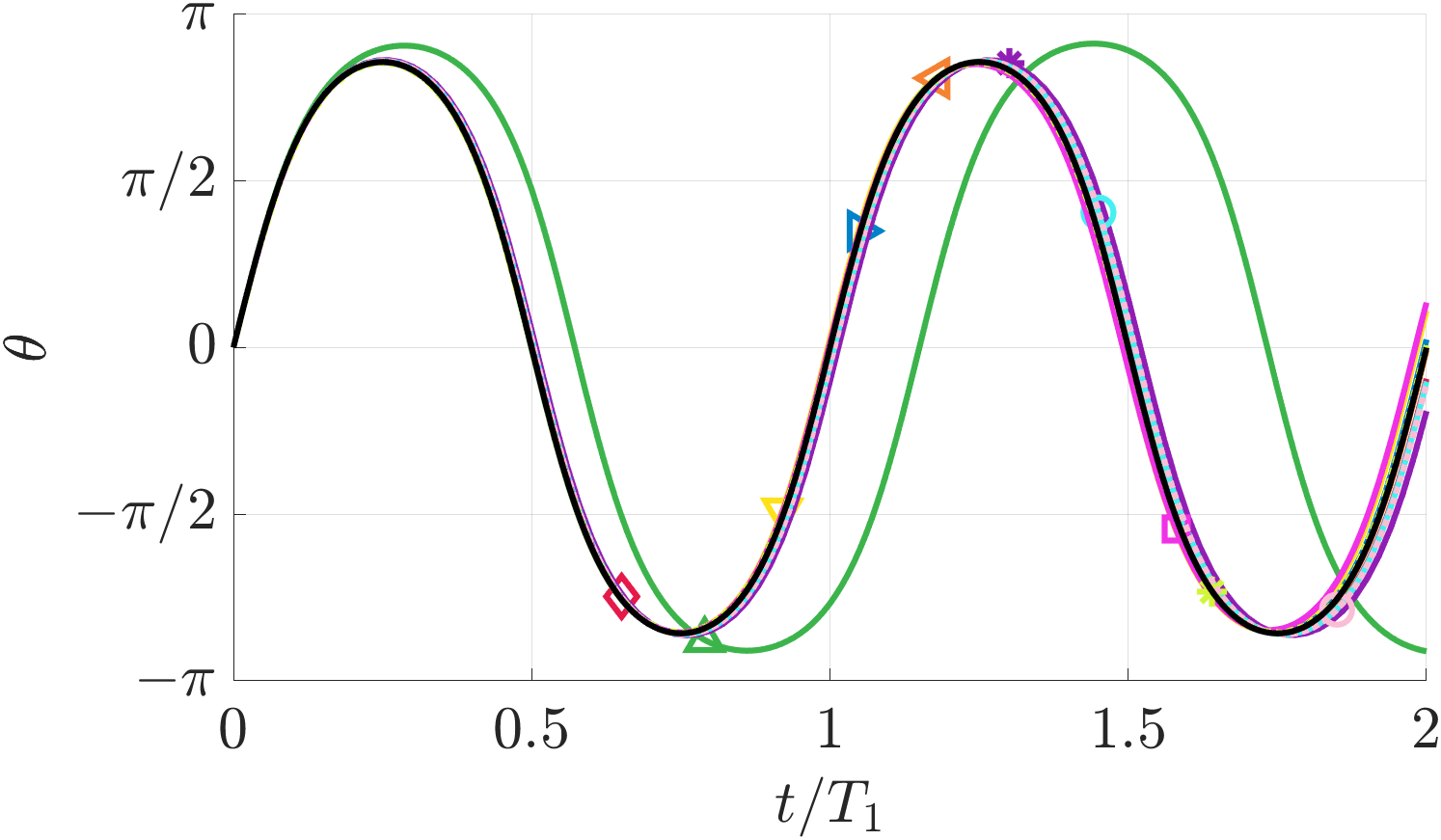}}
    \caption{Time history of angles for various schemes in the simple pendulum model.}
    \label{fig:4thSimplePendulum_history}
\end{figure}

\begin{figure}[!htbp]
    \centering
    {\includegraphics[width=0.6\textwidth]{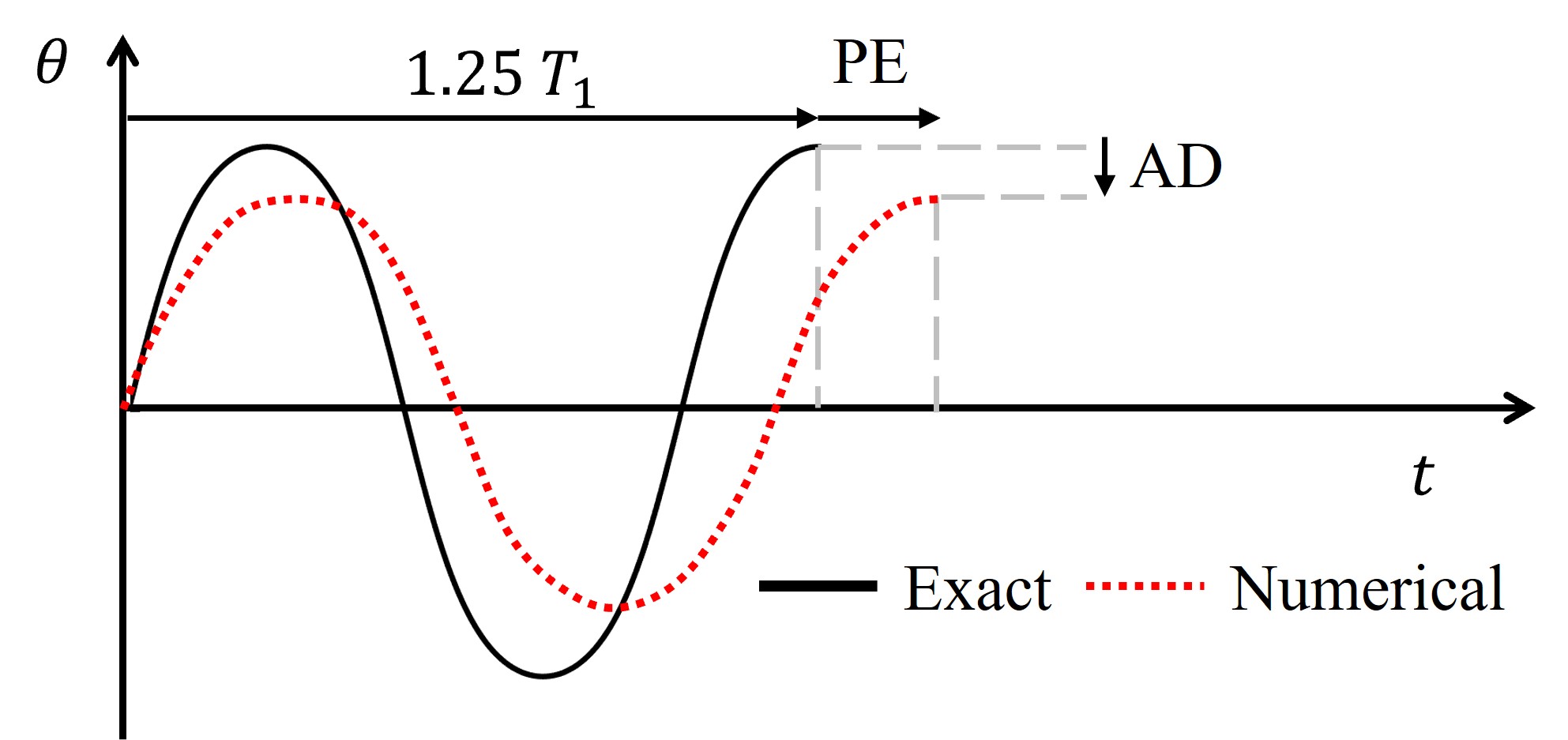}}
    \caption{Definition of period elongation (PE) and amplitude decay (AD) in the simple pendulum model.}
    \label{fig:4thSimplePendulum_PEAD}
\end{figure}

\begin{table}[!htbp]
    \caption{\label{tab:4thSimplePendulumSummary} Period elongation (PE), amplitude decay (AD), and computational costs for various schemes over the time interval $[0,\,2T_1]$ in the simple pendulum problem.}
    \centering 
    \resizebox{\linewidth}{!}{
    \begin{tabular}{l|c c |r r |c r | c  r}
         & \multicolumn{2}{c|}{Avg. \# of iter.}& \multicolumn{2}{c|}{Comput. cost ($\mu s$)}&  \multicolumn{2}{c|}{$PE/1.25T_1$ (\%)}&\multicolumn{2}{c}{$AD/\theta_{max}$ (\%)}\\
         \multicolumn{1}{r|}{$\Delta t$} & $T_1/20$&$T_1/100$& $T_1/20$&$T_1/100$& $T_1/20$&$T_1/100$& $T_1/20$&$T_1/100$\\
         Scheme & && && && &\\
         \hline
         IMEX-BDF1-SAV & 1\phantom{.00} 
 &1\phantom{.000}& 14 &65&  * &\phantom{0}0.7150&* &-0.3242\\
         IMEX-BDF2-SAV & 1\phantom{.00} 
 &1\phantom{.000}& 19 &80&  * &15.3303&* &-6.4429\\
         IMEX-BDF3-SAV & 1\phantom{.00} 
 &1\phantom{.000}& 25 &120&  \phantom{0}-8.2323 &-0.5145&\phantom{0}3.0809 &\phantom{-}0.4998\\
         IMEX-BDF4-SAV & 1\phantom{.00} 
 &1\phantom{.000}& 25 &122&  -11.7333 &-0.1366&10.3890 &\phantom{-}0.1019\\
         IMEX-BDF5-SAV & 1\phantom{.00} 
 &1\phantom{.000}& 25 &129&  -10.9731 &\phantom{0}0.0373&20.7512 &-0.0304\\
         Bathe         & 2\phantom{.00} 
 &1.115& 140 &345&  * &\phantom{0}1.0886&* &-0.6543\\
         TR & 2\phantom{.00} 
 &1.045& 146 &356&  * &\phantom{0}0.7056&* &-0.3551\\
         Generalized-$\alpha$ & 2.35
 &1.555& 150 &408&  -13.8742 &-0.4199&29.1999 &\phantom{-}0.7268\\
         Kim's $4^\textrm{th}$-order & 1\phantom{.00}
 &1\phantom{.000}& 13 &79&  *  &-0.6060&* &\phantom{-}0.0318\\
         CD & 1\phantom{.00}  &1\phantom{.000}& 45 &176&  * &\phantom{0}0.7561&* &-0.3831\end{tabular}
    }
    \raggedright
    {\scriptsize
     * The pendulum cannot maintain oscillations around the equilibrium position.
     }
\end{table}

To compare accuracy of the different schemes, we measure errors in terms of period elongation and amplitude decay using the approach presented by Bathe and Wilson \cite{bathe1972stability}.
These measures can be obtained by comparing the exact solution in Eq.~(\ref{eqn:4thPendulumExact}) with the numerical solutions (see Fig.~\ref{fig:4thSimplePendulum_PEAD}).
In Table~\ref{tab:4thSimplePendulumSummary}, we summarize the average number of iterations per time step, computational cost, percentage period elongation ($PE/1.25T_1$), and amplitude decay ($AD/\theta_{max}$) for several time integration schemes.
We observe that the computational cost of the proposed IMEX-BDF$k$-SAV schemes is at par or lower than existing explicit methods with the added benefit of unconditional stability over the explicit methods.
Further, at the smaller time-step ($\Delta t = T_1/100$), period elongation and amplitude decay improve with the order of the proposed IMEX-BDF$k$-SAV schemes for $k$ = 3, 4 and 5.
This trend is found to continue for progressively smaller time-steps as well that are not shown in this table.
This numerical example demonstrates the main benefits of the IMEX-BDF$k$-SAV schemes i.e. unconditional stability, low computational cost, no need for iterations for nonlinear problems, and high-order accuracy.

\subsection{Spring-pendulum model}

We solve an elastic spring-pendulum problem \cite{chung1994new,kim2018improved,kim2018improved2} as shown in Fig.~\ref{fig:4thSpringPendulum} .
The governing differential equation of the problem can be expressed as:
\begin{equation}
    \begin{bmatrix}
        \ddot{x} \\ \ddot{\theta}
    \end{bmatrix}
    +
    \begin{bmatrix}
        k/m & 0 \\ 0 & 0
    \end{bmatrix}
    \begin{bmatrix}
        x \\ \theta
    \end{bmatrix}
    +
    \begin{bmatrix}
        -(L_0+x)(\dot{\theta})^2-g\cos{\theta} \\
        (2\dot{x}\dot{\theta}+g\sin{\theta})/(L_0+x)
    \end{bmatrix}
    =
    \begin{bmatrix}
        f_x \\ f_\theta
    \end{bmatrix}
    \label{eqn:4thSpringPenGE}
\end{equation}
where $x$ and $\theta$ denote the radial and circumferential displacements, respectively, $m$ is the mass of the pendulum, $k$ is the spring constant, $L_0$ is the initial length of the pendulum, and $g$ is the gravitational constant.
The parameters for this problem are selected as $m=1$, $k=98.1$, $L_0=0.5$, and $g=9.81$.
In this study, we consider a manufactured analytical solution in time from $t=0$ to $t=2$:
\begin{equation}
    x(t) = \theta(t) = 0.1 \sin\left(2\pi t\right)
    \label{eqn:4thSpringPenManSol}
\end{equation}
Substituting Eq.~(\ref{eqn:4thSpringPenManSol}) into Eq.~(\ref{eqn:4thSpringPenGE}), the required forcing functions $f_x$ and $f_\theta$ are obtained.
For this nonlinear system, the maximum pseudo-energy is $\Psi_{max} = 0.49$.
Thus, for the proposed IMEX-BDF$k$-SAV schemes, we select $\psi=100\Psi_{max}=49$.

\begin{figure}[!htbp]
    \centering
    {\includegraphics[width=0.5\textwidth]{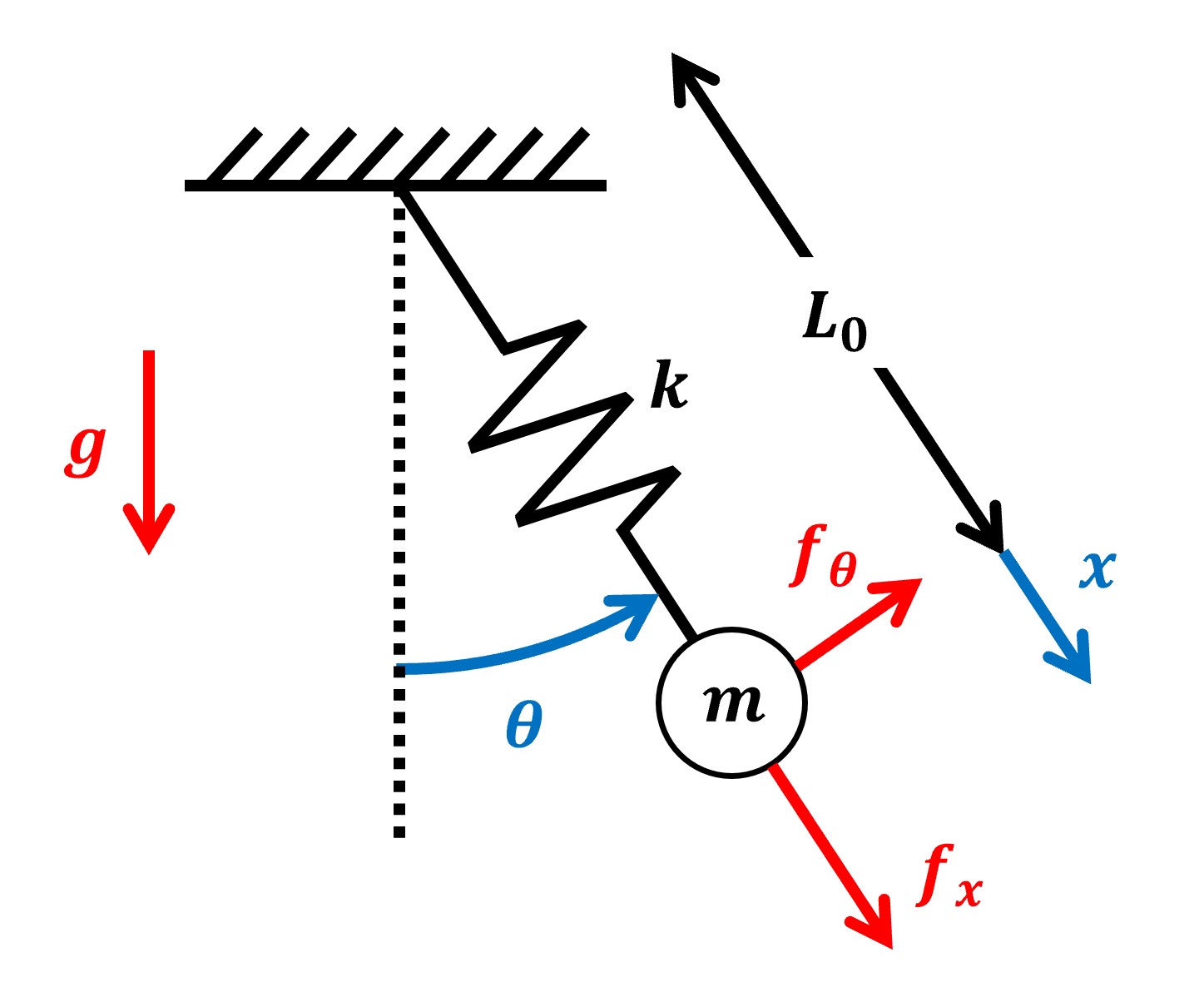}}
    \caption{A 2DOF spring-pendulum model.}
    \label{fig:4thSpringPendulum}
\end{figure}

Fig.~\ref{fig:4thSpringPendulum_d} shows time histories of displacements for various time integration schemes with $\Delta t=T_1/10$ and $T_1/20$ for the proposed IMEX-BDF$k$-SAV schemes.
Similar to previous example, the time-steps of the comparative schemes are selected as $n_{sub} \times \Delta t$.
In this problem, since the internal force depends upon velocities, we employ the central difference method proposed by Park and Underwood \cite{park1980variable}, i.e. CD(PU), to avoid the need for iterations at each time-step.
Note that, at the larger time-step ($n_{sub} \times \Delta t=4 T_1/10$), Kim's $4^\textrm{th}$-order scheme is unstable.
We also observe that the IMEX-BDF1-SAV scheme, being a low-order scheme, has high errors for both the time-steps considered here. 
The performance of different time integration schemes for this problem is summarized in Table~\ref{tab:4thSpringPendulumSummary}.
We note that, at the smaller time-step, the IMEX-BDF4-SAV and IMEX-BDF5-SAV schemes lead to the lowest error, have low computational cost and are guaranteed to be unconditionally stable.

\begin{figure}[!htbp]
    \centering
    {\includegraphics[width=0.52\textwidth]{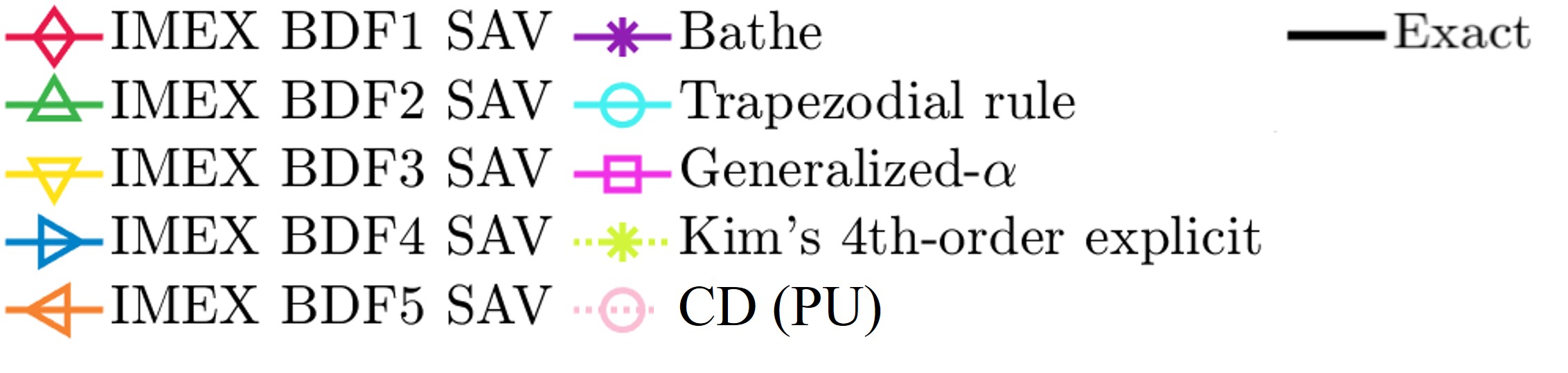}}
    \subcaptionbox{$x$ when $\Delta t=T_1/10$}{\includegraphics[width=0.49\textwidth]{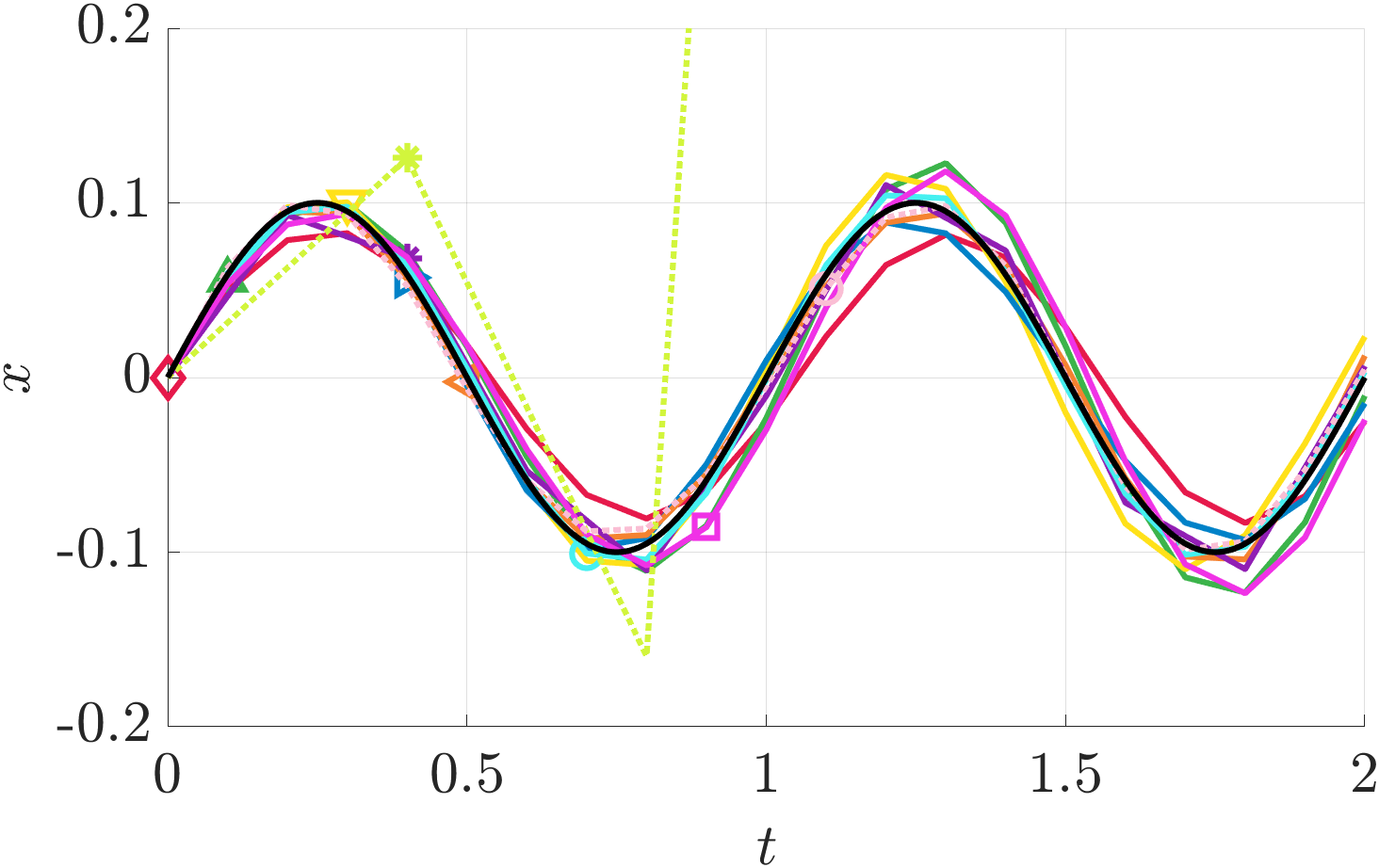}}
    \subcaptionbox{$\theta$ when $\Delta t=T_1/10$}{\includegraphics[width=0.49\textwidth]{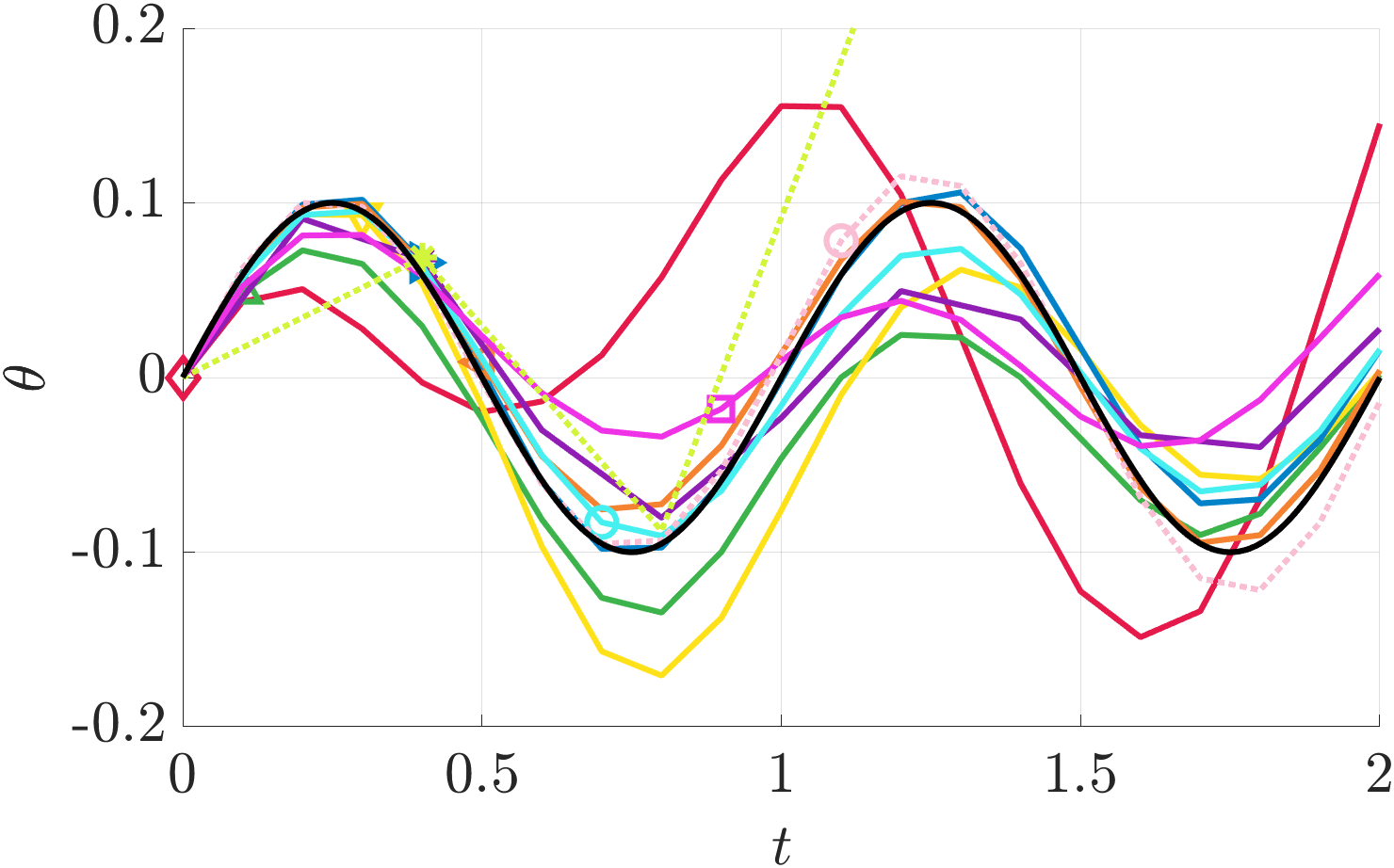}}
    \subcaptionbox{$x$ when $\Delta t=T_1/20$}{\includegraphics[width=0.49\textwidth]{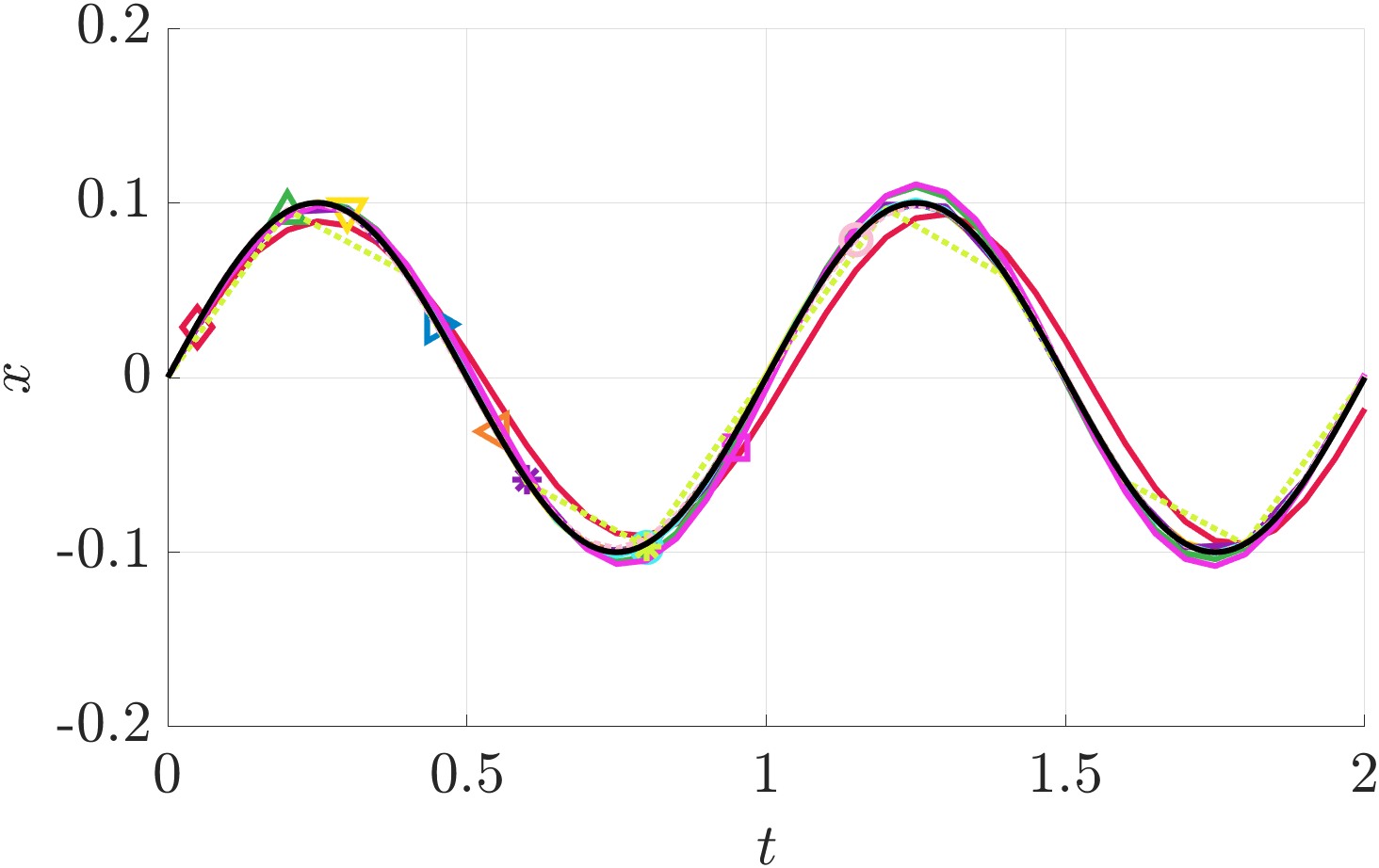}}
    \subcaptionbox{$\theta$ when $\Delta t=T_1/20$}{\includegraphics[width=0.49\textwidth]{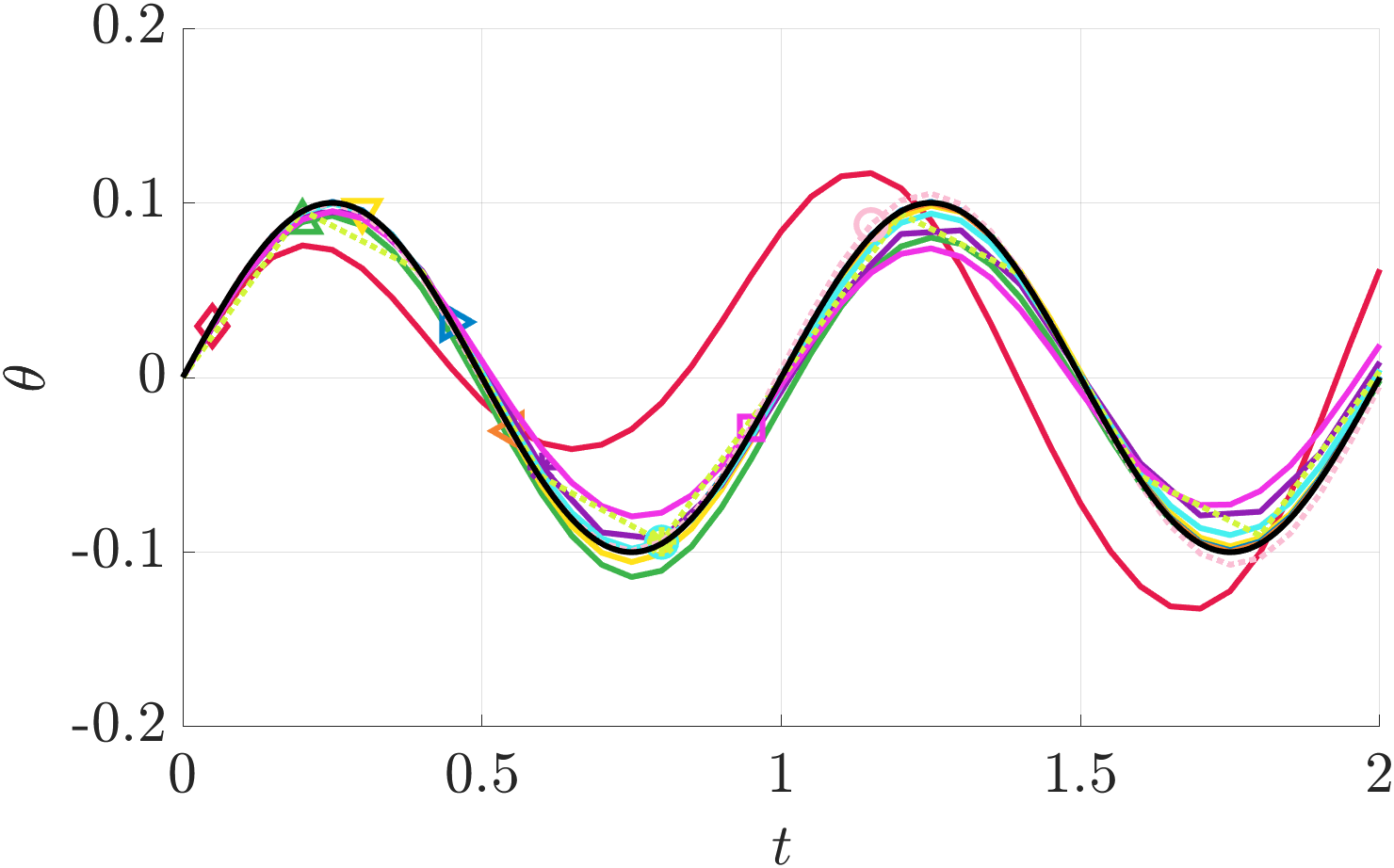}}
    \caption{Time history of the radial displacement $x$ and circumferential displacement $\theta$.}
    \label{fig:4thSpringPendulum_d}
\end{figure}

\begin{table}[!htbp]
    \caption{\label{tab:4thSpringPendulumSummary} Maximum errors and computational cost for various schemes using $\Delta t=T_1/10$ and $T_1/20$ over the time interval $[0,\,2T_1]$ in the spring-pendulum problem.}
    \centering 
    \resizebox{\linewidth}{!}{
    \begin{tabular}{l|c c| r r| c r| c r| c  r}
         & \multicolumn{2}{c|}{Avg. \#}& \multicolumn{2}{c|}{Comput.}& \multicolumn{6}{c}{Error (\%) \vspace{-2mm}}\\ 
         &\multicolumn{2}{c|}{of iter.}&  \multicolumn{2}{c|}{cost ($\mu s$)}& \multicolumn{2}{c|}{$\epsilon(\boldsymbol{u})$}& \multicolumn{2}{c|}{$\epsilon(\boldsymbol{v})$}& \multicolumn{2}{c}{$\epsilon(\boldsymbol{a})$}\\
 \multicolumn{1}{r|}{$\Delta t$}& $T_1/10$& $T_1/20$& $T_1/10$& $T_1/20$& $T_1/10$& $T_1/20$&  $T_1/10$&$T_1/20$& $T_1/10$&$T_1/20$\\
 Scheme&  &&  &&  &&  && &\\ \hline
         IMEX-BDF1-SAV & 1\phantom{.00} 
 &1\phantom{.000}
& 178 &326& 86.02 &45.22& 55.16 &29.29& 47.18&28.27\\
         IMEX-BDF2-SAV & 1\phantom{.00} 
 &1\phantom{.000}
& 167 &335& 36.10 &10.04& 24.99 &7.59& 37.81&11.16\\
         IMEX-BDF3-SAV & 1\phantom{.00} 
 &1\phantom{.000}
& 203 &323& 39.40 &2.91& 22.40 &1.93& 30.59&2.54\\
         IMEX-BDF4-SAV & 1\phantom{.00} 
 &1\phantom{.000}
& 176 &306& 12.68 &0.78& 10.54 &0.50& 17.48&0.71\\
         IMEX-BDF5-SAV & 1\phantom{.00} 
 &1\phantom{.000}
& 157 &301& 11.27 &0.27& \phantom{0}9.33 &0.16& 15.52&0.19\\
         Bathe         & 2.95
 &2.425
& 409 &743& 27.61 &9.14& 18.38 &5.59& 19.00&5.74\\
         TR & 2.8\phantom{0}
 &2\phantom{.000}
& 418 &622& 16.84 &4.92& 10.74 &3.00& 11.30&3.02\\
         Generalized-$\alpha$ & 3\phantom{.00}
 &2.5\phantom{00}
& 613 &1012& 41.15 &15.20& 31.70 &11.52& 48.35
&23.00\\
         Kim's $4^\textrm{th}$-order & 1\phantom{.00}
 &1\phantom{.000}
& *\phantom{0} &105& * &2.27& * &1.51& *&2.30\\
         CD (PU) & 1\phantom{00} &1\phantom{.000}
& 94 &182& 13.32 &4.25& 14.29 &6.18& 10.80&4.22\end{tabular}
    }
    \raggedright
    {\scriptsize
     * Unstable
     }
\end{table}

\newpage

\subsection{Multi-degree-of-freedom (MDOF) Duffing oscillator}

In this section, we solve a 20-DOF Duffing oscillator system as shown in Fig.~\ref{fig:4thMDOFDuffing}.
The Duffing oscillators are connected in series such that DOFs $i$ and $i-1$ are connected with a damper with coefficient $c_i$, linear hardening stiffness $k_{1,i}$, and cubic hardening stiffness $k_{3,i}$.
Conceptually, this problem can be thought of as a 20-story shear building with nonlinear springs.
The equation of motion at DOF $i$ of this system can be written as:
\begin{equation}
    \begin{array}{l}
        m_i \ddot{u}^i + \underbrace{c_i(\dot{u}^i-\dot{u}^{i-1}) + c_{i+1}(\dot{u}^i-\dot{u}^{i+1})}_{\text{Linear damping}} + \underbrace{k_{1,i}(u^i-u^{i-1}) + k_{1,i+1}(u^i-u^{i+1})}_{\text{Linear stiffness}}  \\
        + \underbrace{k_{3,i}(u^i-u^{i-1})^3 + k_{3,i+1}(u^i-u^{i+1})^3}_{\text{Nonlinear force}} = f^{ext,i} 
    \end{array}     
\end{equation}
We select the following values of the problem parameters: $m_i=1$, $c_i=0.3$, $k_{1,i}=1$ and $k_{3,i}=10$ for all $i=1,2,...,20$. 
External force, $f^{ext,i}=0$ for $i=1,2,...,19$, and $f^{ext,20}=\cos{\omega_p t}$, where $\omega_p=1$.
We impose zero initial displacement and velocity, $\boldsymbol{u}(0) = \boldsymbol{v}(0) = \boldsymbol{0}$.
During the simulation, the maximum value of pseudo-energy stored in the underlying undamped linear system is approximated as $\Psi_{max} \approx \frac{1}{2}\boldsymbol{u}_{max}^T\boldsymbol{K}\boldsymbol{u}_{max} \approx  40 $, where $\boldsymbol{u}_{max} \approx 2 [\boldsymbol{K}]^{-1} \max (\boldsymbol{f}^{ext})$, $\boldsymbol{K}$ is the linear stiffness matrix, and $\boldsymbol{f}^{ext}$ is external force vector.
Thus, we choose $\psi = 100\Psi_{max}=4000$ for the proposed IMEX-BDF$k$-SAV schemes.

\begin{figure}[!htbp]
    \centering
    {\includegraphics[width=\textwidth]{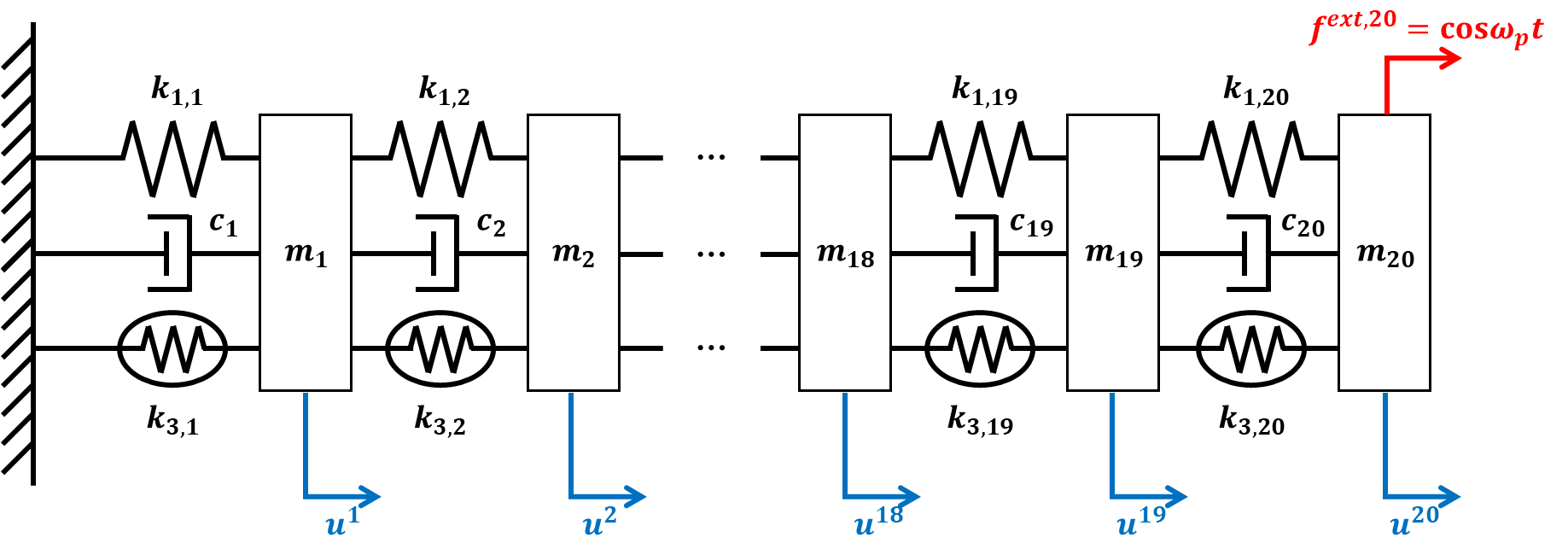}}
    \caption{A nonlinear system comprising 20 Duffing oscillators.}
    \label{fig:4thMDOFDuffing}
\end{figure}

To generate a reference solution to this problem, the standard Bathe scheme~\cite{bathe2005composite,bathe2007conserving}, Kim's $4^\textrm{th}$-order scheme and the RK4 method are used to solve this problem with a time-step of $10^{-5}$.
The maximum difference (error), as defined in Eq.~(\ref{eqn:4thErr_definition}), between the solutions obtained from these methods is found to be of the order of $10^{-10}$.
With this difference (error) in mind, the standard Bathe scheme with the time-step size of $10^{-5}$ is taken to be a reference solution for comparing errors among different time integration schemes for this problem.

Fig.~\ref{fig:4thMDOFDuffing_plot} shows time histories of all kinematic quantities at node 10 over the time duration $[0,50]$ for the proposed IMEX-BDF$k$-SAV, Bathe, TR, and generalized-$\alpha$, Kim's $4^\textrm{th}$-order, and CD schemes.
The time-step chosen for this problem is $n_{sub} \times \Delta t$, where $\Delta t = 0.2$.
Note that Kim's $4^\textrm{th}$-order scheme is unstable at this large time-step whereas the proposed IMEX-BDF$k$-SAV schemes are always unconditionally stable and they achieve up to $5^\textrm{th}$-order accuracy.

\begin{figure}[!htbp]
 \centering
 {\includegraphics[width=0.5\textwidth]{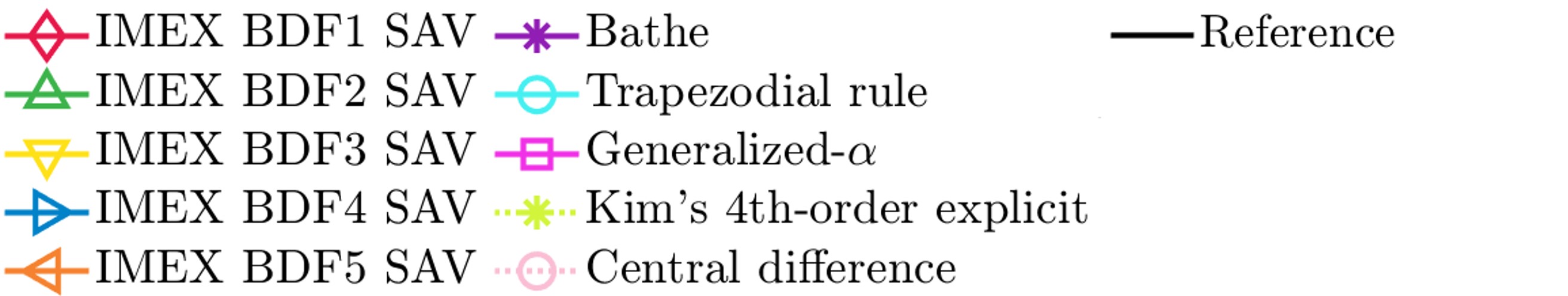}}
 \subcaptionbox{Displacement}{\includegraphics[width=0.8\textwidth]{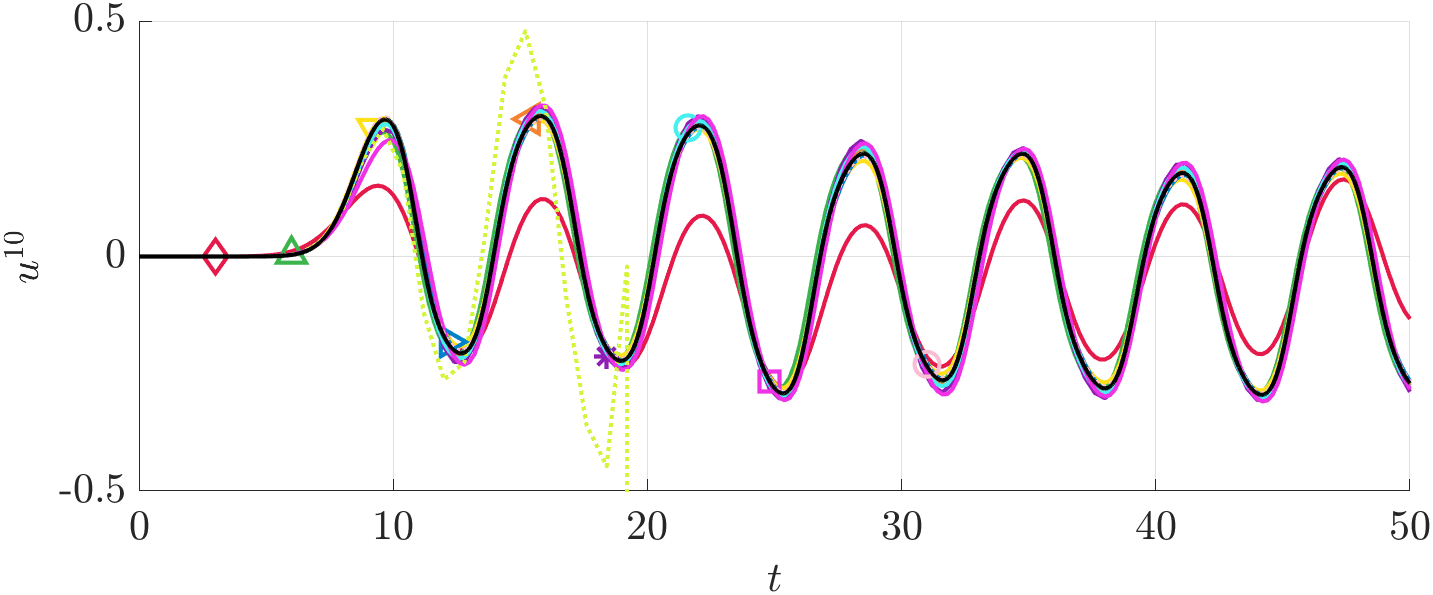}}
 \subcaptionbox{Velocity}{\includegraphics[width=0.8\textwidth]{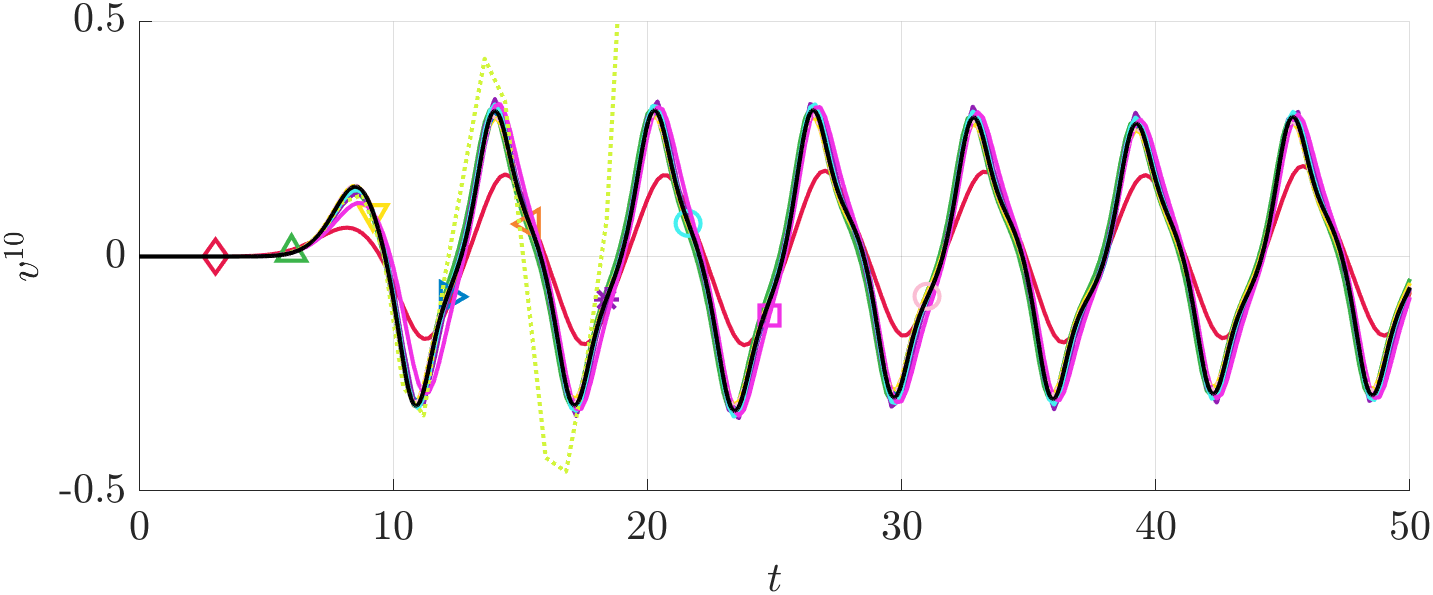}}
 \subcaptionbox{Acceleration}{\includegraphics[width=0.8\textwidth]{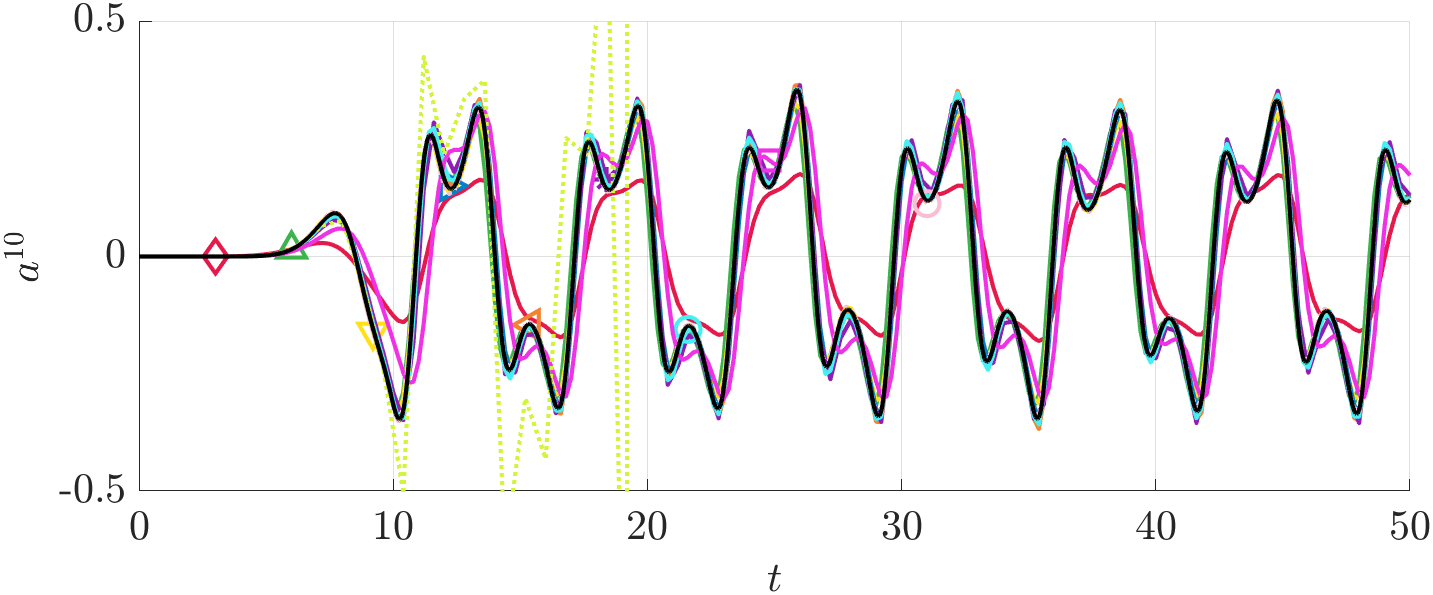}}
 \caption{Time histories of displacement, velocity, and acceleration at node 10 when $\Delta t=0.2$.}
 \label{fig:4thMDOFDuffing_plot}
\end{figure}

Fig.~\ref{fig:4thMDOFDuffing_error} shows the comparison between maximum errors and computational cost for different time integration schemes when $\Delta t$ is varied from 0.005 to 0.16 by doubling it successively 5 times.
Note that, for the same computational cost, explicit CD and Kim's $4^\textrm{th}$-order schemes yield more accurate results than the proposed IMEX-BDF2-SAV and IMEX-BDF4-SAV schemes, respectively.
However, explicit schemes are only conditionally stable and require a small time-step to compute stable solutions.
On the other hand, compared to $2^\textrm{nd}$-order implicit schemes, we note that the IMEX-BDF2-SAV scheme offers the best trade-off between computational cost and accuracy.
Thus, with the proposed IMEX-BDF$k$-SAV schemes, one essentially gets the benefit of unconditional stability, high accuracy and lowest computational cost.

\begin{figure}[!htbp]
    \centering
    \subcaptionbox{Displacement}{\includegraphics[width=0.49\textwidth]{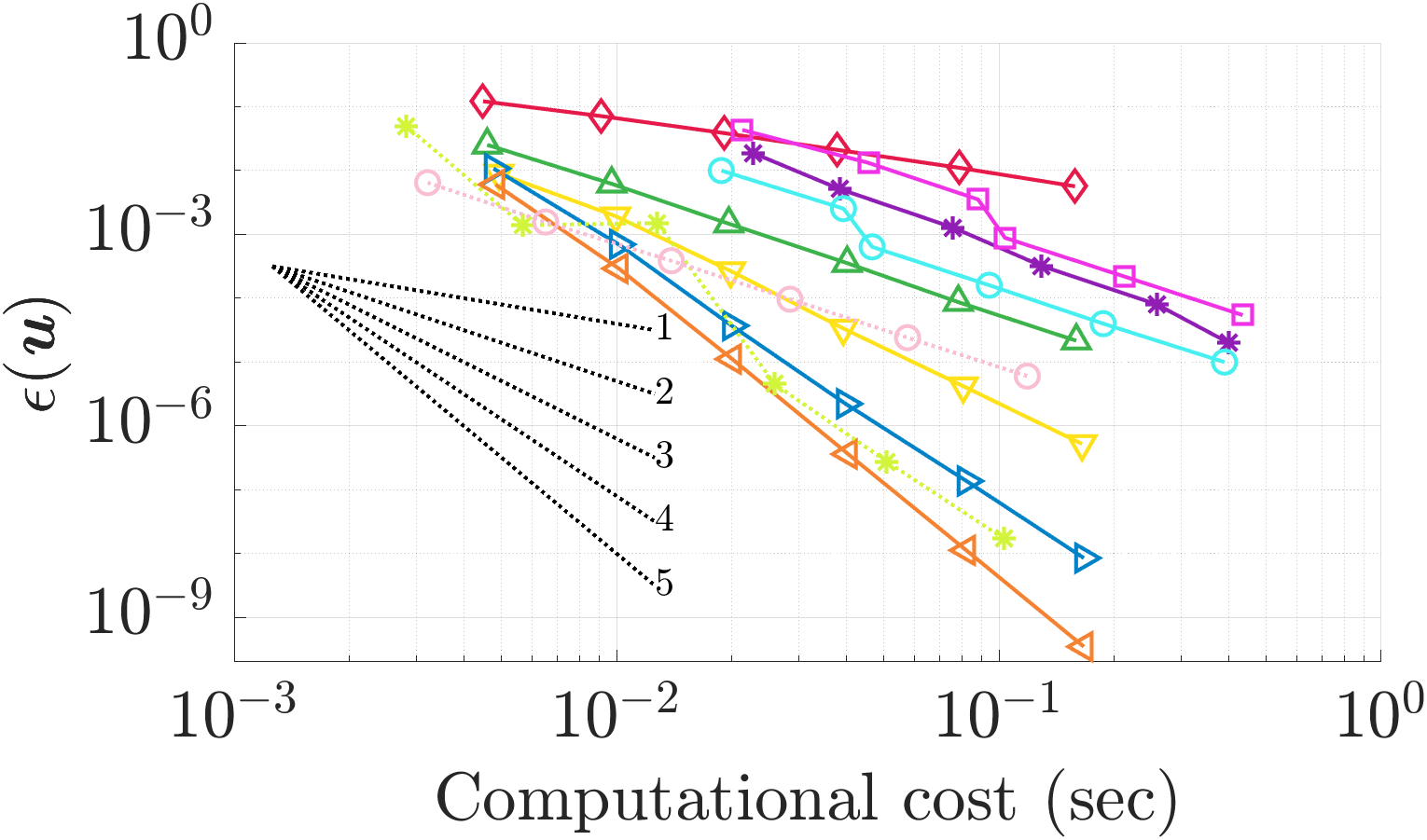}}
    \subcaptionbox{Velocity}{\includegraphics[width=0.49\textwidth]{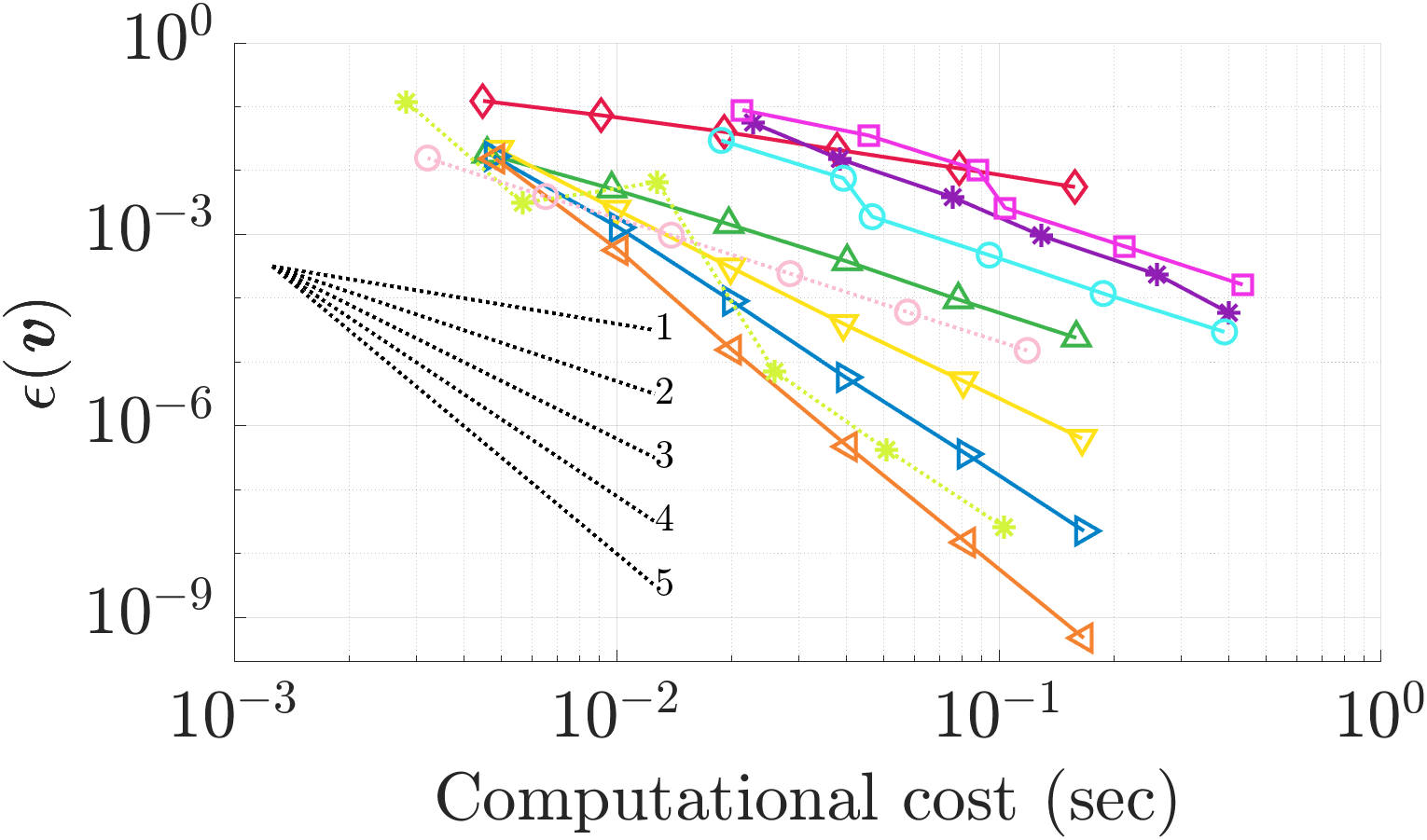}}
    \subcaptionbox{Acceleration}{\includegraphics[width=0.49\textwidth]{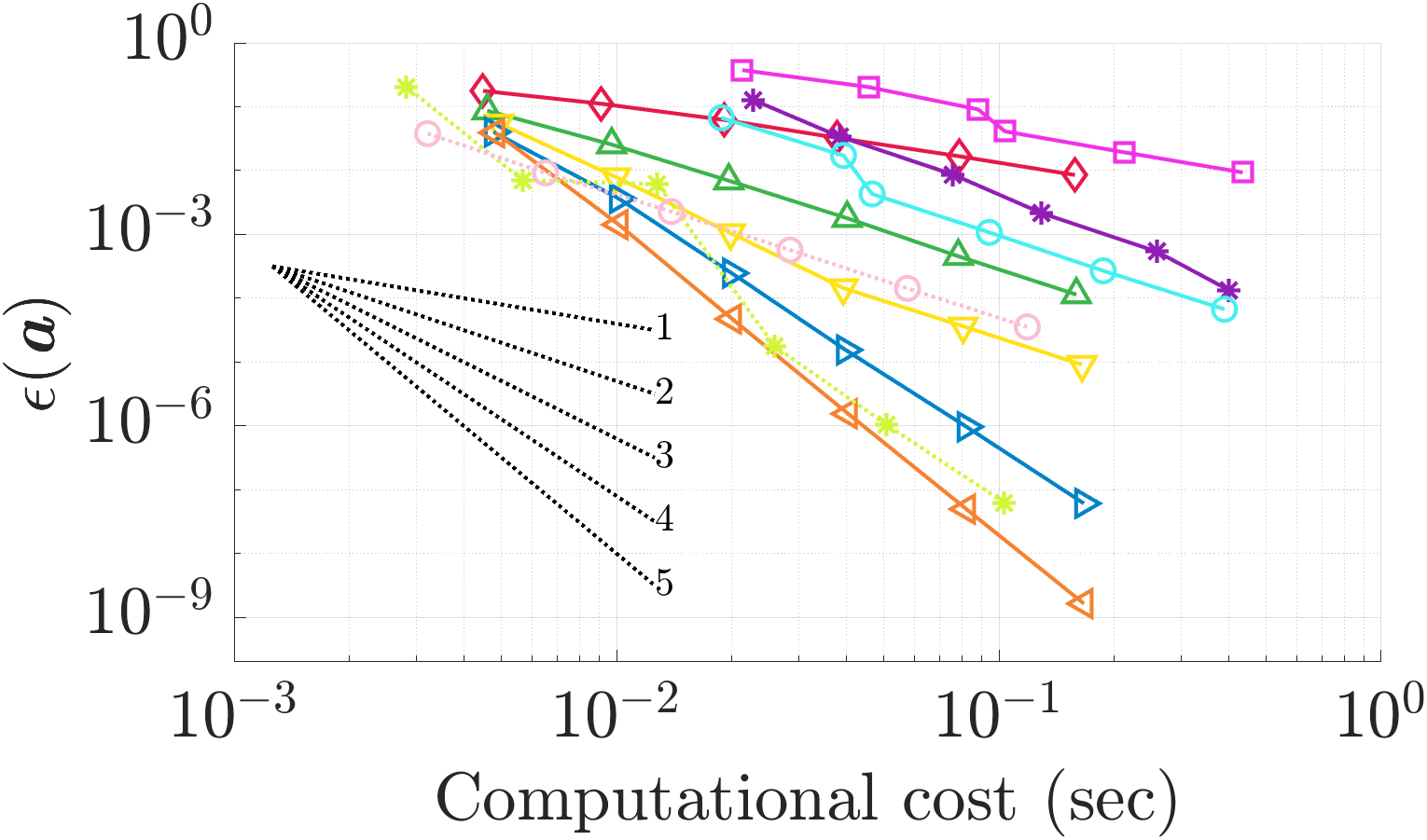}}
    {\includegraphics[width=0.25\textwidth]{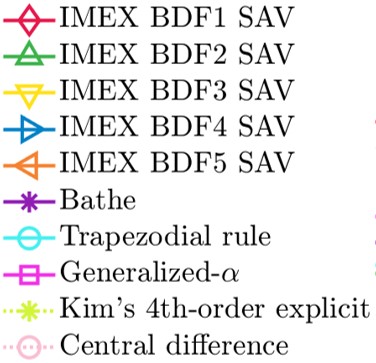}}
    \caption{Maximum errors in displacement, velocity, and acceleration.
    The 6 points on each curve correspond to values of $\Delta t \in \{0.005, 0.01, 0.02, 0.04, 0.08, 0.16\}$.
    }
    \label{fig:4thMDOFDuffing_error}
\end{figure}

\subsection{Nine-story building subject to El Centro earthquake}

As shown in Fig.~\ref{fig:4thNineStory}, we consider a 9-story building with a Buckling-Restrained Axial Damper (BRAD) on the first floor. 
This 9-story structure has been used as a benchmark problem to study seismically-excited buildings in Ref.~\cite{ohtori2004benchmark,bunting2016characterizing,maghareh2016adaptive}.
The geometry, material properties and boundary conditions are identical to those of the nine-story benchmark building north-south (N-S) moment-resisting frame in Ref.~\cite{ohtori2004benchmark}.
Initial displacement and velocity vectors are taken to be zero and the following external force is applied to the structure:
\begin{equation}
    \boldsymbol{f}^{ext}(t) = -\boldsymbol{M}\boldsymbol{\Gamma} \ddot{x}_g(t)
\end{equation}
where $\ddot{x}_g$ is the ground acceleration and $\boldsymbol{\Gamma}$ is a loading vector that describes the inertial effects due to the ground acceleration.
The input ground motion is selected as the N-S component recorded at the Imperial Valley Irrigation District substation in El Centro, California, during the Imperial Valley, California earthquake of May 18, 1940 (see Ref.~\cite{2803}).
The input ground motion in Ref.~\cite{2803} is recorded at uniformly spaced intervals of 0.02 sec.
Thus, when a time-instant of $t_i$ is not a multiple of 0.02 sec, the ground acceleration $\ddot{x}_g(t_i)$ is obtained by shape-preserving piecewise cubic interpolation.

\begin{figure}[!htbp]
    \centering
    {\includegraphics[width=0.95\textwidth]{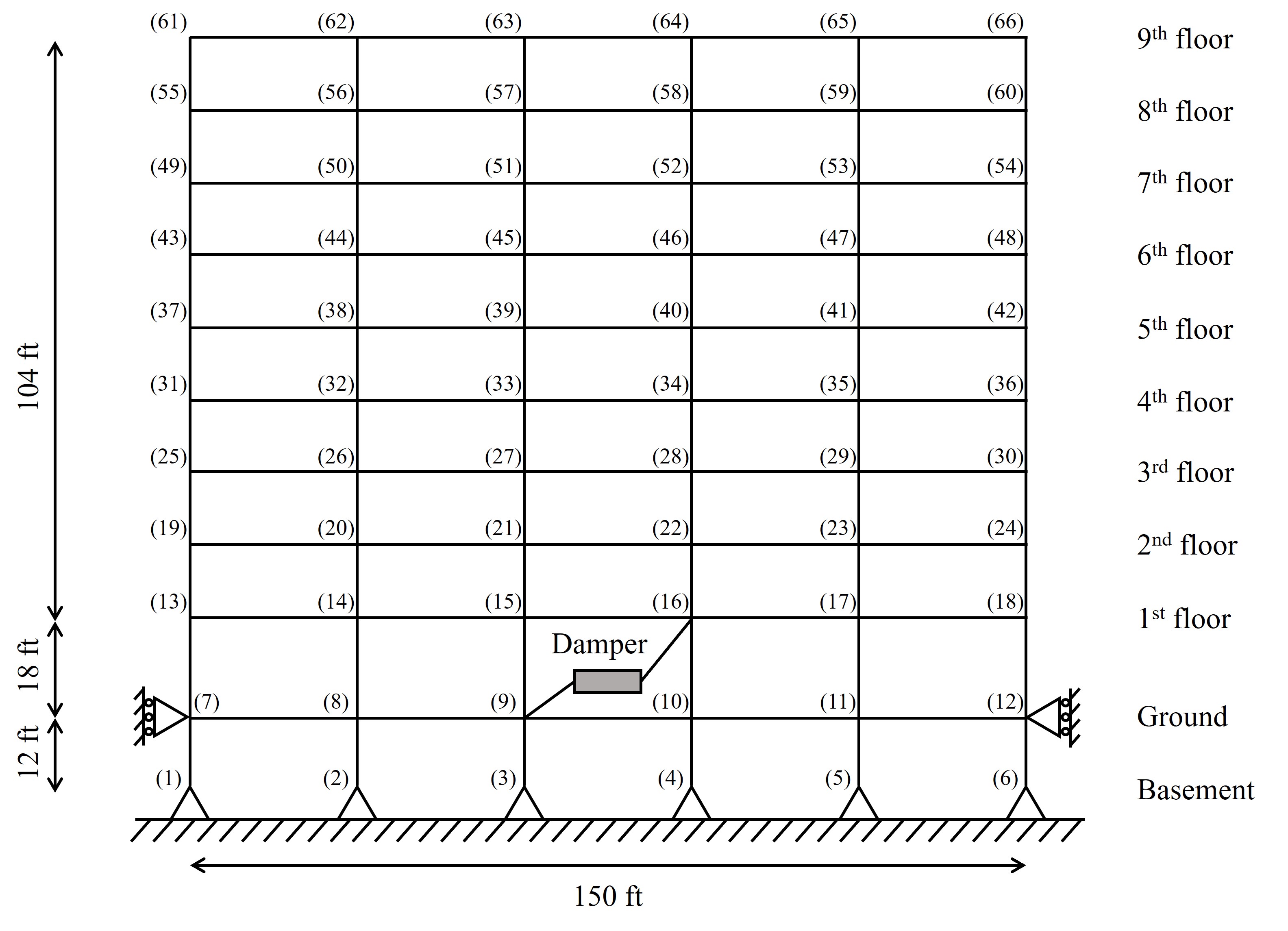}}
    \caption{A 9-story building with a Buckling-Restrained Axial Damper (BRAD).}
    \label{fig:4thNineStory}
\end{figure}

We use one 2-noded Euler-Bernoulli frame element within each column and beam for spatial discretization.
The mass matrix is obtained using the HRZ lumping scheme \cite{hinton1976note}. 
The corresponding first five natural frequencies are 0.443, 1.16, 1.99, 2.92, and 3.94 Hz, which are consistent with those in Ref.~\cite{ohtori2004benchmark}.
Mass- and stiffness-proportional Rayleigh damping coefficients are selected such that the first and fifth modal damping ratios are 0.02: $\zeta_1=\zeta_5=0.02$.
For modeling the BRAD, we use the extended Bouc–Wen model \cite{sireteanu2010identification}, where the restoring force $F(t)$ and hysteretic displacement $z(t)$ are given as:
\begin{align}
    &F(t) = \alpha k u(t) + (1-\alpha)k z(t) \\
    &\dot{z}(t) = \dot{u}(t) \left[ A - \beta |z(t)|^p - \gamma |z(t)|^q sgn(z(t)\dot{y}(t)) \right]
\end{align}
where $sgn$ denotes the sign function, and $\alpha$, $k$, $A$, $\beta$, $\gamma$, $p$, and $q$ are model parameters.
In this study, the following model parameters are chosen: $A=1$, $\alpha = 0.015$, $p=4.6$, $q=0.925$, $k = \varphi F_u / u_u$, $\beta = \hat{\beta}/|u_u|^p$, and $\gamma = \hat{\gamma}/|u_u|^q$ where $u_u=0.02$ m, $F_u=2\times 10^5$ N, $\varphi = 7.756$, $\hat{\beta}=-9.856$, and $\hat{\gamma}=7.185$ (see Ref.~\cite{sireteanu2010identification}).

In the absence of linear and nonlinear damping, we estimate the maximum pseudo-energy as $\Psi_{max} \approx \frac{1}{2}\boldsymbol{u}_{max}^T\boldsymbol{K}\boldsymbol{u}_{max} \approx  12$ MJ, where $\boldsymbol{u}_{max} \approx 2 [\boldsymbol{K}]^{-1} \max (|\boldsymbol{f}^{ext}|)$.
Similar to other examples, $\psi$ for the proposed IMEX-BDF$k$-SAV schemes is selected as $100\Psi_{max} = 1.2$ GJ.

A reference solution is obtained by solving the problem with the RK4 scheme and with Kim's $4^\textrm{th}$-order scheme using a small time-step of $10^{-6}$ sec.
The error (or difference) between these two solutions is found to be of the order of $10^{-13}$ in displacement.
The solution from the RK4 scheme is taken as the reference solution for computation of errors in the following discussion.

\begin{figure}[!htbp]
    \centering
    \subcaptionbox{Time history of displacement \label{fig:4th9Story_sol_a}}
    {\includegraphics[width=0.99\textwidth]{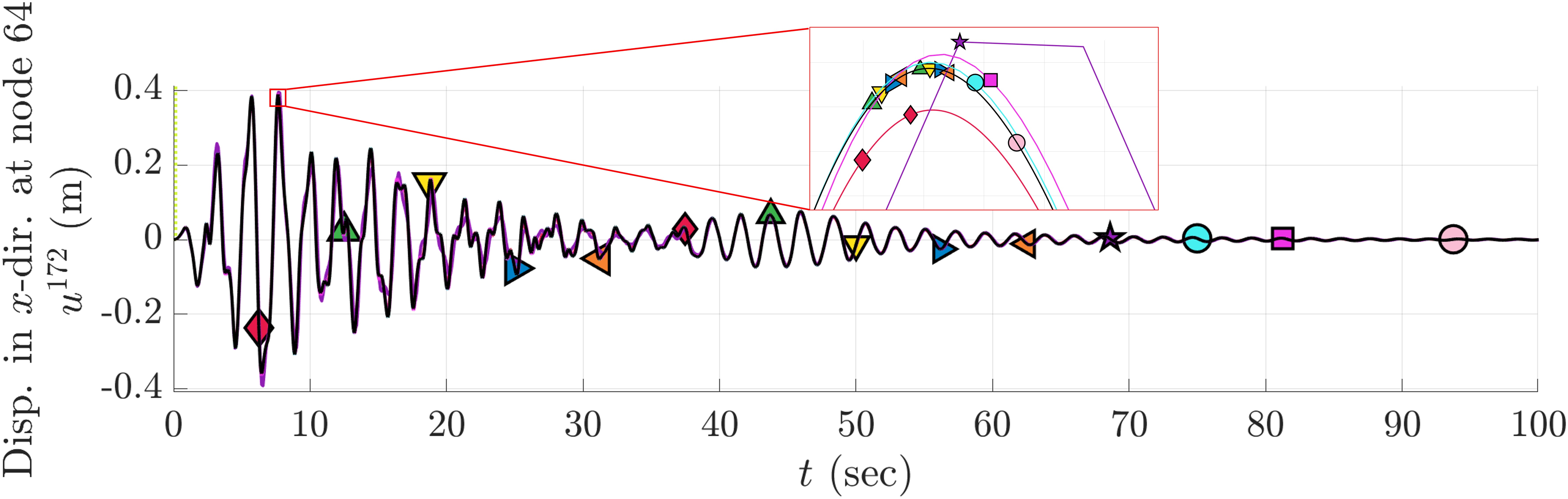}}
    \subcaptionbox{Time history of local instantaneous error \label{fig:4th9Story_sol_b}}
    {\includegraphics[width=0.99\textwidth]{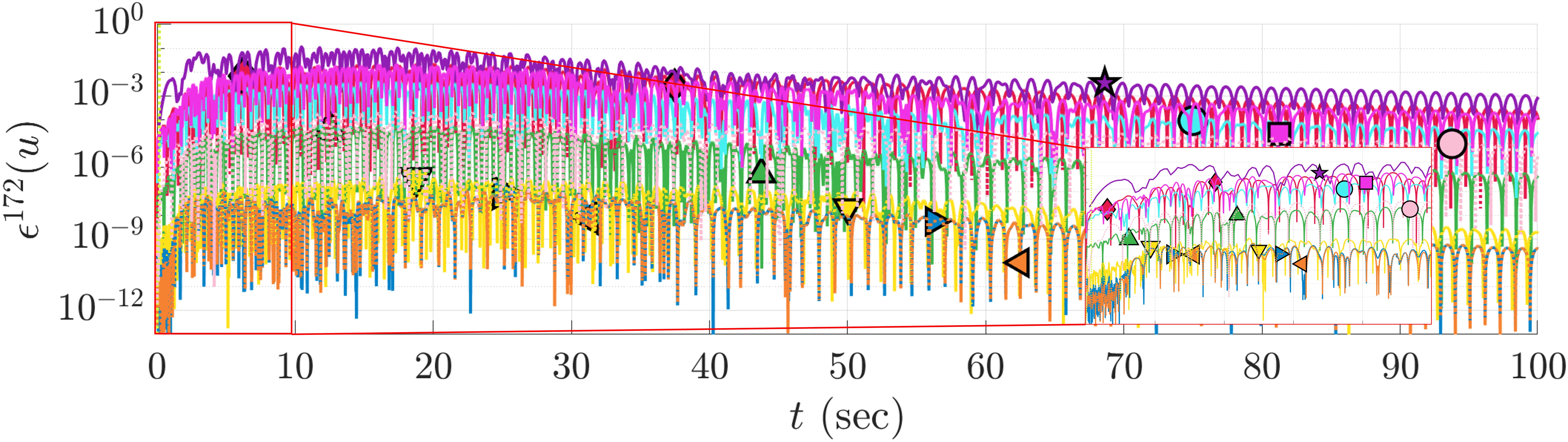}}
    {\includegraphics[width=0.45\textwidth]{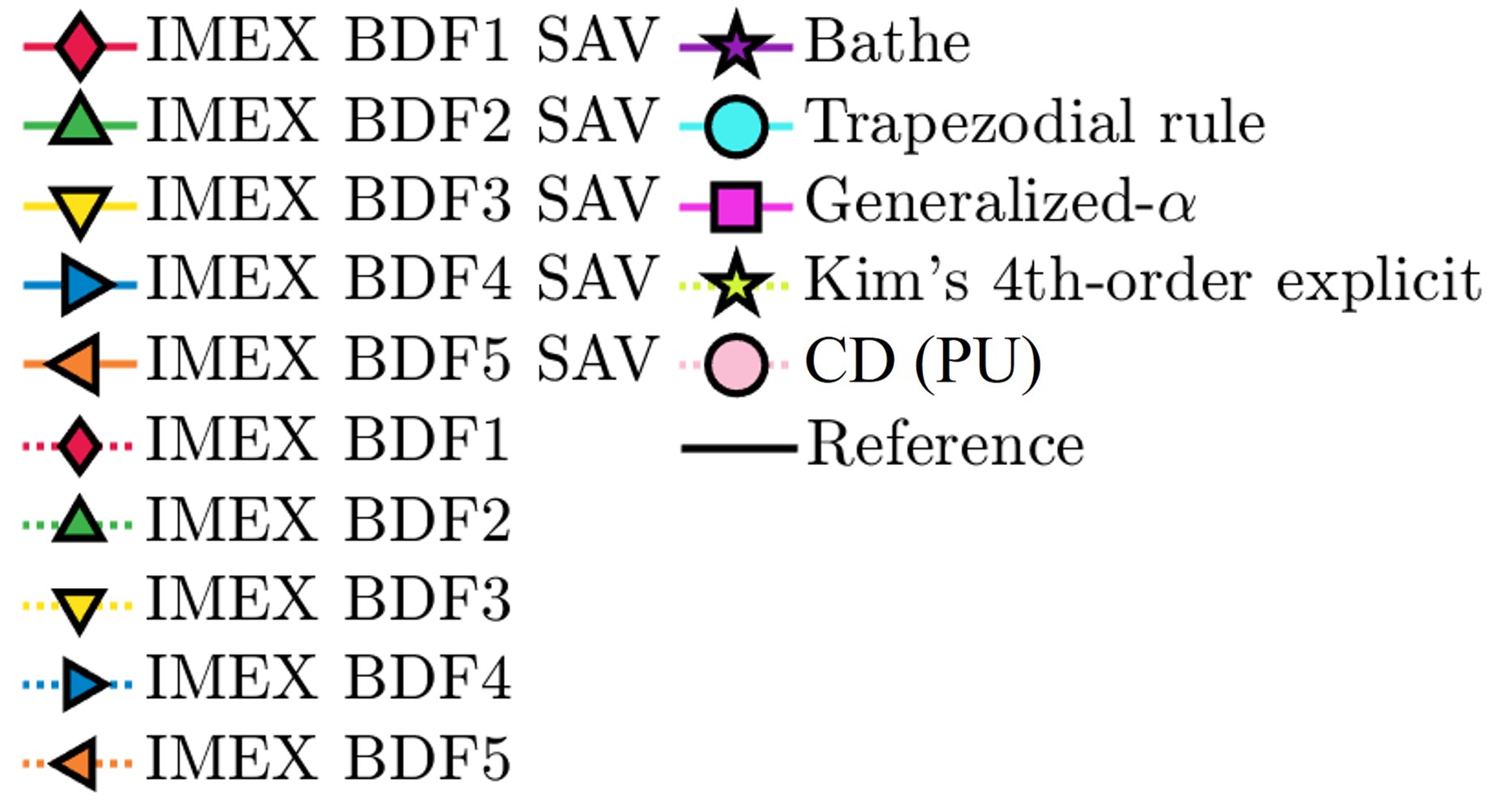}}
    \subcaptionbox{Maximum error vs. Computational cost for different time-steps \label{fig:4th9Story_sol_c}}
    {\includegraphics[width=0.49\textwidth]{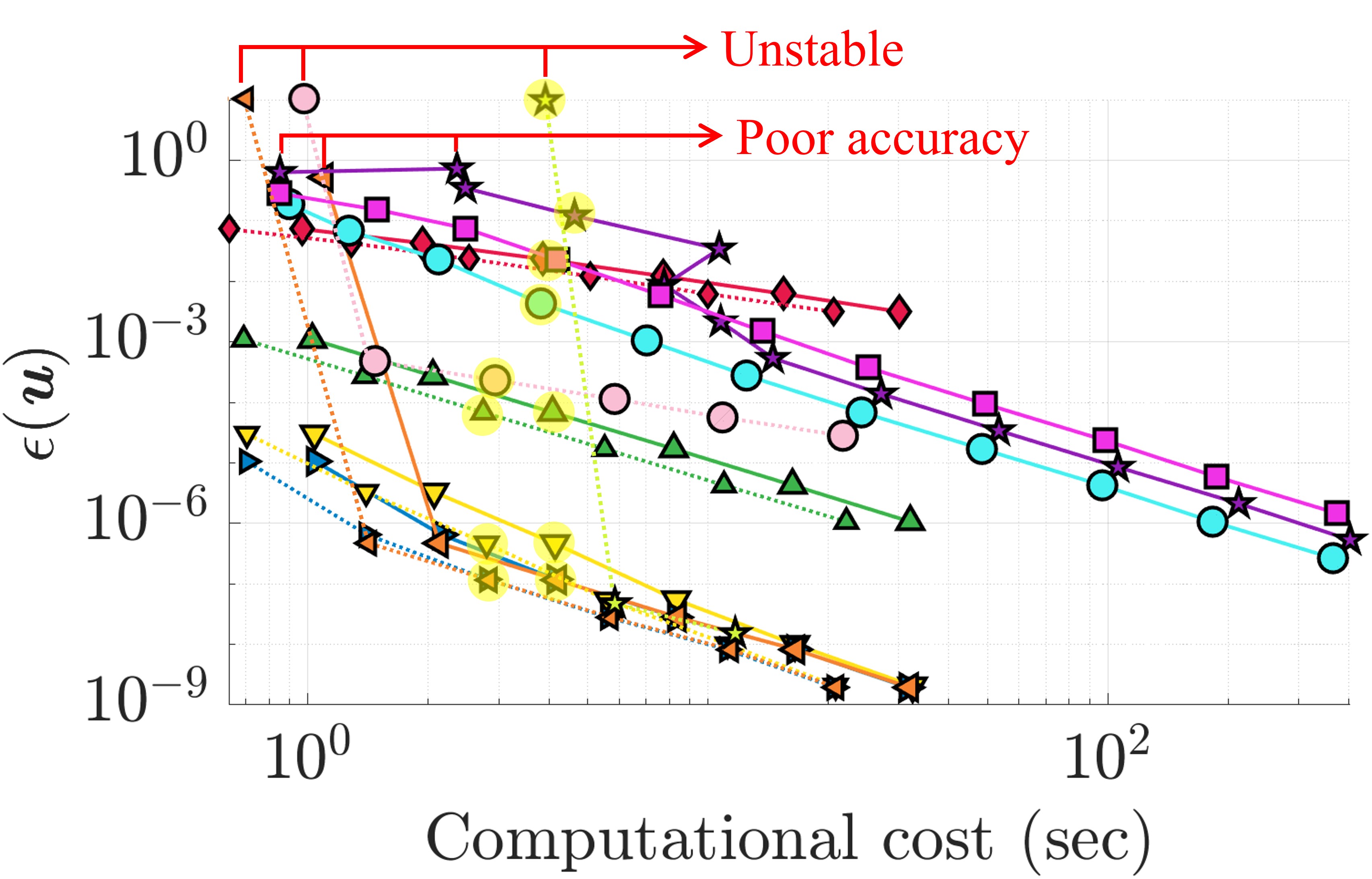}}
    \caption{(a) Time history of displacement in $x$-direction at node 64 for different time-steps (corresponding points in sub-figure~(c) are highlighted), (b) its local instantaneous error $\epsilon^{172}(d)$, (c) maximum error in displacement at any DOF for different time integration schemes.
    In sub-figure~(c), the smallest time-step for each scheme considered is $n_{sub} \times \Delta t$ where $\Delta t = 0.0001$ sec and the remaining points on each curve are obtained by successively doubling this time-step. 
    Note that the conventional IMEX-BDF5 and the central difference schemes are unstable for time-steps $\Delta t \ge 0.0032$ sec and Kim's $4^\textrm{th}$-order explicit scheme is unstable for time-step $(4 \times \Delta t) \ge (4\times0.0004$ sec$)$.}
    \label{fig:4th9Story_sol}
\end{figure}

\begin{figure}[!htbp]
    \centering
    {\includegraphics[width=0.99\textwidth]{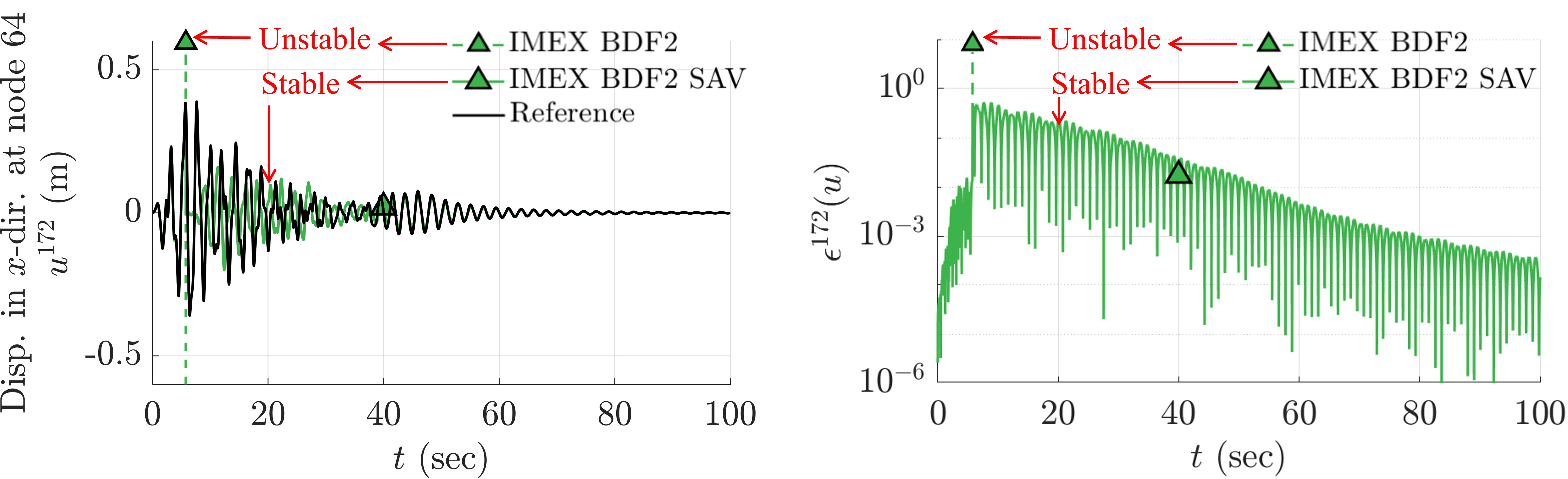}}    
    \caption{Time history of displacement and local instantaneous error obtained from IMEX-BDF2 and IMEX-BDF2-SAV schemes for $\Delta t = 0.02$ sec. Note the unconditional stability of IMEX-BDF2-SAV scheme when the underlying IMEX-BDF2 scheme becomes unstable.}
    \label{fig:4th9Story_sol_compare_IMEX}
\end{figure}

\begin{table}[!htbp]
    \caption{\label{tab:4th9StoryCost} Summary of time-steps, computational costs and maximum errors for different time-integration schemes presented in Fig.~\ref{fig:4th9Story_sol}}
    \centering 
    \resizebox{\linewidth}{!}{
    \begin{tabular}{l|c cc ||l |c cc}
 Scheme & $\Delta t$&  Comput.
&Error& Scheme & $\Delta t$& Comput.
&Error \vspace{-2mm}\\
 & &  cost (sec)
&$\epsilon(\boldsymbol{u})$& & & cost (sec)&$\epsilon(\boldsymbol{u})$\\ \hline
         IMEX-BDF1-SAV& 0.0008&  3.867389 
&$2.3 \times 10^{-2}$& Bathe& 0.0512& 4.639049 
&$1.2\times 10^{-1}$\\
 IMEX-BDF2-SAV & 0.0008&  4.086772 
&$6.8\times 10^{-5}$& TR& 0.0128& 3.823075 
&$4.3\times10^{-3}$\\
         IMEX-BDF3-SAV & 0.0008&  4.147435 
&$4.6 \times10^{-7}$& Generalized-$\alpha$ & 0.0128& 4.202484 
&$2.2\times10^{-2}$\\
         IMEX-BDF4-SAV & 0.0008&  4.166472 
&$1.2\times10^{-7}$& Kim's $4^\textrm{th}$ explicit& 0.0004& *&*\\
         IMEX-BDF5-SAV & 0.0008&  4.208458 
&$1.2\times10^{-7}$& CD (PU)& 0.0008& 2.936435 &$2.3\times10^{-4}$\\
 IMEX-BDF1& 0.0008&  2.532614 
&$2.3 \times 10^{-2}$& & & &\\
         IMEX-BDF2& 0.0008&  2.741602 
&$6.8 \times 10^{-5}$& & & &\\
         IMEX-BDF3& 0.0008&  2.797186 
&$4.6\times10^{-7}$& & & &\\
         IMEX-BDF4& 0.0008&  2.806063 
&$1.2\times10^{-7}$& & & &\\
         IMEX-BDF5& 0.0008&  2.842503 &$1.2\times10^{-7}$& & & &\\\end{tabular}    
         }
         \\
         \raggedright
    {\scriptsize
     * Unstable
     }
\end{table}

In Fig.~\ref{fig:4th9Story_sol}, sub-figure (a) presents the time history of displacement in the $x$-direction at the node 64 (DOF 172), sub-figure (b) shows the time-history of error, and sub-figure (c) shows a comparison of the computational cost and accuracy of the different time integration schemes.
In Figs.~\ref{fig:4th9Story_sol}(a) and \ref{fig:4th9Story_sol}(b), the time-steps for the various schemes presented are chosen such that the computational cost of all the schemes is similar --
around 3.5 seconds, as highlighted in Fig.~\ref{fig:4th9Story_sol}(c).
The specific time-steps chosen and associated computational costs of all the schemes considered are listed in Table~\ref{tab:4th9StoryCost}.
Fig.~\ref{fig:4th9Story_sol}(b) shows that for all schemes, local instantaneous error is higher in the initial part of the simulation when the earthquake excitation is higher and it steadily decreases with time.
Fig.~\ref{fig:4th9Story_sol}(c) shows that the conventional IMEX-BDF5 and the central difference schemes are unstable for time-steps $\Delta t \ge 0.0032$ sec and Kim's $4^\textrm{th}$-order explicit scheme is unstable for time-step $(n_{sub} \times \Delta t) \ge (n_{sub}\times0.0004$ sec$)$ (where $n_{sub} = 4$).
Further, from Table \ref{tab:4th9StoryCost}, we note that the proposed IMEX-BDF$k$-SAV schemes give stable and progressively accurate solutions as $k$ increases from 1 to 5.
One exception to this statement is that the IMEX-BDF5-SAV scheme results in poor accuracy for $\Delta t \ge 0.0032$ sec.
Note that for this time-step, the underlying IMEX-BDF5 scheme is actually unstable.
Upon further investigation, it was found that when the underlying IMEX-BDF$k$ scheme becomes unstable, the associated IMEX-BDF$k$-SAV scheme results in poor accuracy for some duration of the simulation, but it remains unconditionally stable.
For problems with a non-zero forcing function, the proposed IMEX-BDF$k$-SAV schemes are also able to recover from the local temporal inaccuracy and give stable and accurate solutions for the remainder of the simulation.
For example, Fig.~\ref{fig:4th9Story_sol_compare_IMEX} shows that for $\Delta t = 0.02$ sec (a relatively large time-step for this problem), the IMEX-BDF2 scheme becomes unstable about 5 seconds into the simulation.
At this point, we see a sharp rise in error of the corresponding IMEX-BDF2-SAV scheme which steadily decreases over time.
Finally, we observe that the proposed IMEX-BDF$k$-SAV scheme with $k \ge 2$ provides the most accurate results when compared to other implicit and explicit schemes for the computational cost.

\section{Concluding remarks}
\label{sec:4th4}
In this study, we present a SAV stabilization of IMEX BDF$k$ schemes for structural dynamics, and its stability and error analysis.
Unlike existing SAV schemes, the SAV stabilization is designed to solve second-order ODEs, such as general nonlinear problems in structural dynamics.
In the proposed IMEX-BDF$k$-SAV schemes, linear terms are handled implicitly while nonlinear terms are handled explicitly.
Thus, there is no need for iterative algorithms, such as Picard iteration and Newton-Raphson iteration, to solve nonlinear terms. 
Unlike conventional IMEX methods, which are conditionally stable, the proposed IMEX-BDF$k$-SAV schemes employ a novel scalar auxiliary variable to maintain unconditional stability.
In addition, the proposed schemes are able to achieve $k^{\textrm{th}}$-order accuracy.

A rigorous stability analysis is conducted to show that the proposed schemes are unconditionally stable for both linear and nonlinear dynamics.
Moreover, we carry out a detailed truncation error analysis.
From the error analysis, we show that the optimal value of parameter $\beta^{(k)}$, given by Eq.~(\ref{eqn:4thbetaPara}), is about half of its value in existing IMEX-BDF$k$-SAV schemes for other dynamical systems governed by first-order ODEs.
This leads to a lower energy bound for unconditional stability.
We also prove that the proposed IMEX-BDF$k$-SAV schemes with these values preserve the local truncation errors of the conventional BDF$k$ schemes for linear structural dynamics.
Convergence tests are conducted to validate these theoretical findings.

We illustrate the performance of the proposed IMEX-BDF$k$-SAV schemes by solving various nonlinear problems: a simple pendulum, a spring-pendulum, a MDOF Duffing oscillator, a 9-story building with a nonlinear damper subjected to an earthquake excitation.
We compare the numerical results obtained using the proposed schemes and several implicit (Bathe, TR, and generalized-$\alpha$) and explicit (Kim's $4^{\textrm{th}}$-order and CD) time integration schemes using different measures of error.
These numerical examples demonstrate that the proposed IMEX-BDF$k$-SAV schemes yield accurate solutions at a low computational cost while maintaining unconditional stability.

\section*{Acknowledgments}
This work is supported by the Collaborative Research CPS Co-Designed Control and Scheduling Adaptation for Assured Cyber-Physical System Saftey and Performance through NSF CNS-2229136.

\begin{appendices}
\counterwithin{equation}{section} 
\renewcommand{\theequation}{\thesection\arabic{equation}} 

\section{Local truncation errors in forced responses}
\label{ap:4thLTEforced}
With initial displacement $u_0$ and velocity $v_0$ given, the LTEs of the proposed IMEX-BDF$k$-SAV schemes for forced responses (i.e. $f^{ext}=p_0 \sin(\omega_f t)$) are obtained as follows. 
Note that LTEs in displacements and velocities are found to be of the order of $\Delta t^{k+1}$: $\tau_u \propto O(\Delta t^{k+1})$ and $\tau_v \propto O(\Delta t^{k+1})$ for $1 \le k \le 5$:

i) $k=1$:
\begin{equation}
    \begin{bmatrix}
        \tau_{u} \\ \tau_{v}
    \end{bmatrix} \propto
    -\frac{\omega_0^2\Delta t^2}{2}
    \begin{bmatrix}
        u_0 (1+2\zeta\kappa) \\
        v_0 (1-4\zeta^2-2\zeta/\kappa)
    \end{bmatrix} 
    +\frac{p_0 \omega_f \Delta t^2}{2}
    \begin{bmatrix}
        0 \\
        1
    \end{bmatrix} 
    + O(\Delta t^3)
\end{equation}

ii) $k=2$:
\begin{equation}
    \begin{array}{ll}
    \begin{bmatrix}
        \tau_{u} \\ \tau_{v}
    \end{bmatrix} \propto &
    -\dfrac{2\omega_0^3\Delta t^3}{9}
    \begin{bmatrix}
        u_0 (-2\zeta+(1-4\zeta^2)\kappa) \\
        v_0 (-4\zeta(1-2\zeta^2)-(1-4\zeta^2)/\kappa)
    \end{bmatrix} \\
    & +\dfrac{2 p_0 \omega_f \Delta t^3}{9}
    \begin{bmatrix}
        1 \\
        -2 \omega_0 \zeta
    \end{bmatrix} + O(\Delta t^4)
    \end{array}
\end{equation}

iii) $k=3$:
\begin{equation}
    \begin{array}{ll}
    \begin{bmatrix}
        \tau_{u} \\ \tau_{v}
    \end{bmatrix} \propto &
    \dfrac{3\omega_0^4\Delta t^4}{22}\begin{bmatrix}
        u_0 (1-4\zeta^2+4\zeta(1-2\zeta^2)\kappa) \\
        v_0 (1-12\zeta^2+16\zeta^4-4\zeta(1-2\zeta^2)/\kappa)
    \end{bmatrix} \\
    &-
    \dfrac{3 p_0 \omega_f \Delta t^4}{22}\begin{bmatrix}
        2 \omega_0 \zeta \\
        \omega_0^2 (1 - 4 \zeta^2) + \omega_f^2
    \end{bmatrix}
    + O(\Delta t^5)
    \end{array}
\end{equation}

iv) $k=4$:
\begin{equation}
    \begin{array}{ll}
    \begin{bmatrix}
        \tau_{u} \\ \tau_{v}
    \end{bmatrix} \propto &
    \dfrac{12\omega_0^5\Delta t^5}{125}
    \begin{bmatrix} 
        u_0 (-4\zeta(1-2\zeta^2)+(1-12\zeta^2+16\zeta^4)\kappa) \\
        v_0 (-2\zeta(3-16\zeta^2+16\zeta^4)-(1-12\zeta^2+16\zeta^4)/\kappa)
    \end{bmatrix} \\
    & -\dfrac{12 p_0 \omega_f \Delta t^5}{125}
    \begin{bmatrix} 
        \omega_0^2 (1 -4 \zeta^2) +\omega_f^2 \\
        -2 \omega_0 \zeta(2\omega_0^2 (1 -2\zeta^2)+\omega_f^2)
    \end{bmatrix} 
    + O(\Delta t^6)
    \end{array}
\end{equation}

v) $k=5$:
\begin{equation}
    \begin{array}{ll}
    \begin{bmatrix}
        \tau_{u} \\ \tau_{v}
    \end{bmatrix} \propto &
    -\dfrac{10\omega_0^6\Delta t^6}{137}
    \begin{bmatrix}
        u_0 (1-12\zeta^2+16\zeta^4+2\zeta(3-16\zeta^2+16\zeta^4)\kappa) \\
        v_0 (1-24\zeta^2+80\zeta^4-64\zeta^6-2\zeta(3-16\zeta^2+16\zeta^4)/\kappa)
    \end{bmatrix} \\
    &+\dfrac{10 p_0 \omega_f \Delta t^6}{137}
    \begin{bmatrix}
        -2\omega_0 \zeta (2\omega_0^2 (1-2\zeta^2)+\omega_f^2) \\
         \omega_0^4 (1 - 12\zeta^2 + 16\zeta^4) + \omega_0^2\omega_f^2 ( 1 - 4\zeta^2) + \omega_f^4
    \end{bmatrix} + O(\Delta t^7)
    \end{array}
\end{equation}
where $\kappa = v_0/(\omega_0 u_0)$.

\end{appendices}

\bibliography{mybibfile}

\end{document}